\numberwithin{equation}{section}
\renewcommand\d{\partial}
\renewcommand\b{\beta}
\renewcommand\o{\omega}
\renewcommand\t{\TT}
\newcommand\R{\mathbb R}\newcommand\N{\mathbb N}
\def\g{\gamma}
\def\de{\delta}
\def\t{\TT}
\def\O{\Omega}
\def\th{\theta}
\def\l{\lambda}
\def\vp{\varphi}
\def\epsilon{\varepsilon}
\def\e{\varepsilon}
\def\tpsi{\widetilde \psi}
\newcommand\br{\begin{rem}}
\newcommand\er{\end{rem}}
\newcommand\bp{\begin{pmatrix}}
\newcommand\ep{\end{pmatrix}}
\newcommand\be{\begin{equation}}
\newcommand\ee{\end{equation}}
\newcommand\ba{\begin{equation}\begin{aligned}}
\newcommand\ea{\end{aligned}\end{equation}}
\newcommand\nn{\nonumber}
\newcommand{\RR}{{\mathcal R}}
\newcommand{\TT}{{\mathbb T}}
\newcommand{\CC}{{\mathbb C}}
\newcommand{\II}{{\mathbb I}}
\newcommand{\SSS}{{\mathbb S}}
\newcommand{\A}{{\mathcal A}}
\newcommand{\vv}{{\mathbf v}}
\newcommand{\ff}{{\mathbf f}}
\newcommand{\vvarphi}{{\boldsymbol \varphi}}
\newcommand{\Ov}[1]{\overline{#1}}
\newcommand{\DC}{C^\infty_c}
\newcommand{\vr}{\varrho}
\newcommand{\vu}{\vc{u}}
\newcommand{\vg}{\vc{g}}
\newcommand{\vc}[1]{{\bf #1}}
\newcommand{\Div}{{\rm div}}
\newcommand{\Grad}{\nabla_x}
\newcommand{\dx}{{\rm d} {x}}
\newcommand{\dt}{{\rm d} t }
\newcommand{\dq}{{\rm d} q }
\newcommand{\intO}[1]{\int_{\O} #1 \, \dx}
\newtheorem{definition}{Definition}[section]
\newtheorem{theorem}[definition]{Theorem}
\newtheorem{lemma}[definition]{Lemma}
\newtheorem{remark}[definition]{Remark}
\numberwithin{equation}{section}
\begin{document}

\title[Dissipative weak solutions to compressible Navier--Stokes--Fokker--Planck systems]{Dissipative weak solutions to\\ compressible Navier--Stokes--Fokker--Planck systems\\ with variable viscosity coefficients}

\author[Eduard Feireisl]{Eduard Feireisl}
\address{Institute of Mathematics of the Academy of Sciences of the Czech Republic, Zitn\' a 25, 115 67 Prague 1, Czech Republic}
\email{feireisl@math.cas.cz}

\author[Yong Lu]{Yong Lu}
\address{Mathematical Institute, Faculty of Mathematics and Physics, Charles University, Sokolovsk\'a 83, 186 75 Prague, Czech Republic}
\email{luyong@karlin.mff.cuni.cz}

\author[Endre S\"{u}li]{Endre S\"{u}li}
\address{Mathematical Institute, University of Oxford\\ Andrew Wiles Building, Woodstock Rd., Oxford OX2 6GG, UK}
\email{suli@maths.ox.ac.uk}

\keywords{Weak solutions; kinetic polymer models; FENE chain; compressible Navier--Stokes--Fokker--Planck system; nonhomogeneous dilute polymer; variable viscosity}
\subjclass[2000]{}

\date{}

\maketitle

\begin{abstract}

Motivated by a recent paper by Barrett and S\" uli [\textit{J.W. Barrett \& E. S\"uli: Existence of global weak solutions to compressible isentropic finitely extensible bead-spring chain models  for  dilute  polymers,  Math.  Models  Methods  Appl.  Sci., 26 (2016)}],
we consider the compressible Navier--Stokes system coupled with a Fokker--Planck type equation describing the motion of polymer molecules in a viscous compressible fluid occupying a bounded spatial domain, with polymer-number-density-dependent viscosity coefficients.
The model arises in the kinetic theory of dilute solutions of nonhomogeneous polymeric liquids,
where the polymer molecules are idealized as bead-spring chains with finitely extensible nonlinear elastic (FENE) type spring potentials.
The motion of the solvent is governed by the unsteady, compressible, barotropic Navier--Stokes system, where the viscosity coefficients in the Newtonian stress tensor depend on the polymer number density. Our goal is to show that the existence theory developed in the case of constant viscosity coefficients can be extended to the case of polymer-number-density-dependent viscosities, provided that certain technical restrictions are imposed, relating the behavior of the viscosity coefficients and the pressure for large values of the solvent density. As a first step in this direction, we prove here the weak sequential stability of the family of dissipative (finite-energy) weak solutions to the system.

\end{abstract}


\renewcommand{\refname}{References}


\section{Introduction}

 In \cite{Barrett-Suli}, the authors established the existence of global-in-time weak solutions to the Navier--Stokes--Fokker--Planck equations arising in the kinetic theory of dilute polymer solutions and describing a large class of bead-spring chain models with finitely extensible nonlinear elastic (FENE) type spring potentials. For $\O\subset \R^3$ a bounded domain, the solvent density $\vr$ and the solvent velocity field $\vu$ satisfy the following equations in the space-time cylinder $(0,T]\times \O$, $T>0$:
\be\label{i1}
\d_t \vr + \Div_x (\vr \vu) = 0,
\ee
\be\label{i2}
 \d_t (\vr\vu)+ \Div_x (\vr \vu \otimes \vu) +\nabla_x p(\vr) -
\Div_x \SSS =\Div_x \TT +\vr\, \ff,
\ee
which we assume here to be supplemented with the no-slip boundary condition
\be\label{i4}
\vu=\mathbf{0}\quad \mbox{on}\ (0,T]\times \partial\Omega.
\ee

Ignoring the effect of temperature changes, we consider a barotropic pressure law
\be\label{pressure}
p = p(\vr), \ p(\vr) \approx \vr^\gamma \ \mbox{for large values of}\ \vr.
\ee
The {\em Newtonian stress tensor} $\SSS$ is defined by
\be\label{i3}
\SSS  = \mu^S \left( \frac{\nabla_x \vu + \nabla^{\rm T}_x \vu}{2} - \frac{1}{3} (\Div_x \vu) \II \right) + \mu^B (\Div_x \vu) \II,
\ee
with the shear and bulk viscosity coefficients $\mu^S$ and $\mu^B$ defined below (see \eqref{mu-eta1}). In contrast with \cite{Barrett-Suli},  where the shear and bulk viscosity coefficients are taken to be constant, $\mu^S >0$ and $\mu^B \geq 0$, we consider here the case when they are functions of the polymer number density. Such an extension requires nontrivial modifications of the method used in \cite{Barrett-Suli}, and represents the main contribution of the present paper.

In \emph{a bead-spring chain model} consisting of $K+1$ beads coupled with $K$ elastic springs representing a polymer chain,  the non-Newtonian elastic extra stress tensor $\TT$ is defined by a version of the Kramers expression (cf. \eqref{def-TT0} below), depending on the probability density function $\psi$, which, in addition to $t$ and $x$, also depends on the conformation vector $(q_1^{\rm T},\dots q_K^{\rm T})^{\rm T} \in \R^{3K}$, with $q_i$ representing the $3$-component \emph{conformation/orientation vector} of the $i$th spring in the chain. Let $D:=D_1\times \cdots \times D_K \subset \R^{3K}$ be the domain of admissible conformation vectors. Typically $D_i$ is the whole space $\R^3$ or a bounded open ball centered at the origin $0$ in $\R^3$, for each $i=1,\dots,K$. When $K=1$, the model is referred to as the \emph{dumbbell model}.  Here we focus on finitely extensible nonlinear (or, briefly, FENE-type) bead-spring-chain models where $D_i=B(0,b_i^{\frac12})$, a ball centered at the origin $0$ in $\mathbb{R}^3$ and of radius $b_i^{\frac12}$, with $b_i>0$ for each $i \in \{1,\dots, K\}$. The {\em extra-stress tensor} $\TT$ is defined by the formula:
\be\label{def-TT0}
\TT (\psi)(t,x) := \TT_1 (\psi) (t,x) -\left(\int_{D\times D} \gamma(q,q')\, \psi(t,x,q)\,\psi(t,x,q') \, \dq\,\dq'\right)\II,
\ee
where, similarly to \cite{Barrett-Suli}, the interaction kernel $\g$ is assumed to be a positive constant $\gamma(q,q')\equiv \de>0.$
Consequently,
\be\label{def-TT}
\TT (\psi) := \TT_1 (\psi)  -\de \left(\int_{D}  \psi \,\dq\right)^2\II.
\ee
The first part, $\TT_1(\psi)$, of $\TT(\psi)$ is given by the {\em Kramers expression}
\be\label{def-TT1}
\t_1(\psi) := k \left[\left(\sum_{i=1}^K \CC_i(\psi)\right)-(K+1) \left(\int_{D}  \psi \ \dq\right)\II\right],
\ee
where $k>0$ is the product of the Boltzmann constant and the absolute temperature and
\be\label{def-Ci}
\CC_{i}(\psi)(t,x) := \int_D \psi(t,x,q)\, U_i'\bigg(\frac{|q_i|^2}{2}\bigg)\, q_i q_i^{\rm T}\,\dq, \quad i=1,\dots,K.
\ee

In the expression \eqref{def-Ci}, the smooth functions $U_i : [0,\frac{b_i}{2}) \to [0,\infty)$,  $i=1,\dots,K,$ are the spring potentials satisfying
$U_i(0)=0$, $\lim_{s\to \frac{b_i}{2}-} U_i(s) =+\infty$.
We introduce the {\em partial Maxwellian} $M_i : D_i \to [0,\infty)$ by
\be\label{def-Mi}
M_i(q_i) := \frac{1}{Z_i} {\rm e}^{- U_i\big(\frac{|q_i|^2}{2}\big)},\quad \mbox{where } Z_i:=\int_{D_i} \mathrm{e}^{- U_i\big(\frac{|p_i|^2}{2}\big)}\,{\rm d}p_i.
\ee
The {\em Maxwellian} $M : D \to [0,\infty)$ is then defined as the product of the $K$ partial Maxwellians: i.e., for any $q=(q_1^{\rm T},\dots, q_K^{\rm T})^{\rm T}$ in $D = D_1\times\cdots \times D_K$, we have that
\[M(q) := \prod_{i=1}^K M_i(q_i).\]
Clearly,
$
\int_D M(q)\ \dq=1.
$
By direct calculations one verifies that, for any $i \in \{1,\dots,K\}$,
\be\label{pt2-M}
M(q)\, \nabla_{q_i} (M(q))^{-1} = - M(q)^{-1}\, \nabla_{q_i} M(q) = \nabla_{q_i}\left(U_i\bigg(\frac{|q_i|^2}{2}\bigg)\right) = U_i'\bigg(\frac{|q_i|^2}{2}\bigg) q_i
\quad \forall\,q = (q_1^{\rm T},\dots,q_K^{\rm T})^{\rm T}\in D. 
\ee

As in \cite{Barrett-Suli}, we shall suppose that, for any $i\in\{1,\dots, K\}$, there exist positive constants $c_{ij}, \ j=1,\dots,4$, and $\th_i > 1$ such that
\ba\label{pt1-Mi-Ui}
c_{i1}\left({\rm dist}\,(q_i,\d D_i)\right)^{\th_i} \leq M_i(q_i)\leq c_{i2}\left({\rm dist}\,(q_i,\d D_i)\right)^{\th_i},\quad
c_{i3} \leq \left({\rm dist}\,(q_i,\d D_i)\right) U_i'\left(\frac{|q_i|^2}{2}\right)\leq c_{i4} \quad \forall\,q_i\in D_i.
\ea
It is then straightforward to deduce that
\be\label{pt3-Mi-Ui}
\int_{D_i} \left(1+ \left( U_i\bigg(\frac{|q_i|^2}{2}\bigg) \right)^2+ \left( U_i'\bigg(\frac{|q_i|^2}{2}\bigg) \right)^2  \right) M_i(q_i)\,\dq_i <\infty,\quad i=1,\dots, K.
\ee

The probability density function $\psi$ satisfies the following {\em Fokker--Planck equation} in $(0,T]\times \O\times D$:
\ba\label{eq-psi}
\d_t \psi + \Div_x (\vu\,\psi) + \sum_{i=1}^K \Div_{q_i} \left(  (\nabla_x \vu)\, q_i\, \psi \right)= \e \Delta_x \psi+\frac{1}{4\lambda} \sum_{i=1}^K\sum_{j=1}^K A_{ij}\, \Div_{q_i}\left( M \nabla_{q_j} \left( \frac{\psi}{M} \right)\right).
\ea

The \emph{centre-of-mass diffusion} term $\e \Delta_x \psi$ is generally of the form
$
\e \Delta_x \left( \frac{\psi}{\zeta(\vr)} \right),
$
which involves the {\em drag coefficient} $\zeta(\cdot)$ depending on the fluid density $\vr$. Here we assume that $\zeta$ is a constant function, which is, for simplicity, taken to be identically $1$. The constant parameter $\e$ is the {\em centre-of-mass diffusion coefficient}, which is strictly positive.
The positive parameter $\l$ is called the \emph{Deborah number}; it characterizes the elastic relaxation property of the fluid. The constant matrix $A=(A_{ij})_{1\leq i,j\leq K}$, called the \textit{Rouse matrix}, is symmetric and positive definite. We denote by $A_0$ the smallest eigenvalue of $A$; clearly, $A_0>0$. We refer to Section 1 of Barrett and S\"uli \cite{Barrett-Suli1} for a derivation of the Fokker--Planck equation \eqref{eq-psi}.

The Fokker--Planck equation needs to be supplemented by suitable boundary conditions. For any $i=1,\dots, K$, let $\d \overline D_i:= D_1\times \cdots \times D_{i-1} \times \d D_i \times D_{i+1} \cdots \times D_K$ and suppose that
\ba\label{boundary-psi}
\left(\frac{1}{4\lambda} \sum_{j=1}^K A_{ij}\, \Div_{q_i}\left(M \nabla_{q_j} \left( \frac{\psi}{M} \right)\right) -(\nabla_x \vu) \, q_i\, \psi  \right)\cdot \frac{q_i}{|q_i|}&=0 \quad &&\mbox{on}\ (0,T]\times \O \times \d \overline D_i,\\
 \nabla_x\psi \cdot {\bf n}&=0 \quad &&\mbox{on}\ (0,T]\times \d\O \times D.
\ea

Finally,  we introduce the {\em polymer number density} $\eta$ defined by
\be\label{def-eta}
\eta(t,x) := \int_D \psi(t,x,q)\,\dq, \qquad (t,x)\in [0,T]\times \O.
\ee
By (formally) integrating the partial differential equation \eqref{eq-psi} over $D$ and using the boundary condition in $\eqref{boundary-psi}_1$, and by integrating the boundary condition $\eqref{boundary-psi}_2$ over $D$, we deduce the following partial differential equation and boundary condition for the function $\eta$:
\be\label{eq-eta}
\d_t \eta + \Div_x (\vu \,\eta) = \e \Delta_x \eta\quad \mbox{in}\ (0,T] \times \O;\qquad \nabla_x \eta \cdot {\bf n}=0 \quad \mbox{on}\  (0,T]\times \d\O.
\ee
By noting \eqref{def-eta} we see that the expression for the extra-stress tensor in \eqref{def-TT} and \eqref{def-TT1} can also be expressed as follows:
\be\label{def-TT-f}
\TT (\psi) := k \left(\sum_{i=1}^K \CC_i(\psi)\right)-\left(k(K+1) \eta  + \de\,  \eta^2\right) \II.
\ee
As has already been pointed out above, our main objective is to consider a class of models of this form where the viscosity coefficients $\mu^S = \mu^S(\eta)$ and $\mu^B = \mu^B(\eta)$
depend on $\eta$.

\subsection{Dissipative (finite-energy) weak solutions}

We adopt the following hypotheses on the initial data:
\ba\label{ini-data}
&\vr(0,\cdot) = \vr_0(\cdot) \ \mbox{with}\ \vr_0 \geq 0 \ {\rm a.e.} \ \mbox{in} \ \O, \quad \vr_0 \in L^\gamma(\O) \ \mbox{with}\ \gamma>\textstyle{\frac32};\\
&\vu(0,\cdot) = \vu_0(\cdot) \in L^r(\O;\R^3) \ \mbox{for some $r>1$}\ \mbox{such that}\ \vr_0|\vu_0|^2 \in L^1(\O);\\
&\psi(0,\cdot) = \psi_0(\cdot) \ \mbox{with}\ \psi_0 \geq 0 \ \mbox{a.e. in}  \ \O\times D,\quad  \psi_0 \left(\log \frac{\psi_0}{M}\right) \in L^1(\O\times D);\\
&\eta(0,\cdot)=\int_{D}\psi_0 \,\dq =:\eta_0 \in L^2 (\O).
\ea
We deduce from $\eqref{ini-data}_1$ and $\eqref{ini-data}_2$ by using H\"older's inequality that $(\vr\vu)(0,\cdot) = \vr_0 \vu_0 =\sqrt{\vr_0}\sqrt{\vr_0} \vu_0 \in L^{\frac{2\g}{\g+1}}(\O;\R^3)$.

\begin{definition}\label{def-weaksl} We say that $(\vr,\vu,\psi,\eta)$ is a dissipative (finite-energy)
weak solution in $(0,T]\times \O\times D$ to the system of equations
\eqref{i1}--\eqref{i3},  \eqref{eq-psi}--\eqref{def-TT-f}, supplemented by the initial data \eqref{ini-data}, if:
\begin{itemize}

\item $\vr \geq 0 \ {\rm a.e.\  in} \ (0,T] \times \Omega,\quad  \vr \in  C_w ([0,T];  L^\gamma(\Omega)),\quad \vu\in L^{r}(0,T;W_0^{1,r}(\Omega; \R^3))\quad \mbox{for some $r>1$},$
\ba\label{weak-est}
&\vr \vu \in C_w([0,T]; L^{\frac{2 \gamma}{ \gamma + 1}}(\Omega; \R^3)),\quad \vr |\vu|^2 \in L^\infty(0,T; L^{1}(\Omega));\\
&\psi \geq 0 \ {\rm a.e.\  in} \ (0,T] \times \Omega \times D, \quad \psi \in C_{w}([0,T];L^1(\Omega\times D)),\\
& \nabla_x \psi\in L^1((0,T)\times \O\times D;\R^3), \quad  M \nabla_{q} \left(\frac{\psi}{M}\right)\in L^1((0,T)\times \O\times D;\R^{3K}),\\
&\eta =\int_D \psi\ \dq  \ {\rm a.e.\  in} \ (0,T] \times \Omega,\quad \eta \in C_w ([0,T];  L^2(\Omega)) \cap L^2 (0,T;  W^{1,2}(\Omega)),\\
&\TT (\psi) := k \left(\sum_{i=1}^K \CC_i(\psi)\right)-\left(k(K+1) \eta  + \de\,  \eta^2\right) \II \quad {\rm a.e.\  in} \ (0,T] \times \Omega,\quad  \TT\in L^1((0,T)\times \O;\R^{3\times 3}).
\ea

\item For any $t \in (0,T]$ and any test function $\phi \in C^\infty([0,T] \times \Ov{\Omega})$, one has

\be\label{weak-form1}
\int_0^t\intO{\big[ \vr \partial_t \phi + \vr \vu \cdot \Grad \phi \big]} \,\dt' =
\intO{\vr(t, \cdot) \phi (t, \cdot) } - \intO{ \vr_{0} \phi (0, \cdot) },
\ee
\be\label{weak-form2}
\int_0^t \intO{ \big[ \eta \partial_t \phi + \eta \vu \cdot \Grad \phi - \e \nabla_x\eta \cdot \nabla_x \phi \big]} \, \dt' =  \intO{ \eta(t, \cdot) \phi (t, \cdot) } - \intO{ \eta_{0} \phi (0, \cdot) }.
\ee

\item For any $t \in (0,T]$ and any test function $\vvarphi \in \DC([0,T] \times {\Omega};\R^3)$, one has
\ba\label{weak-form3}
&\int_0^t \intO{ \big[ \vr \vu \cdot \partial_t \vvarphi + (\vr \vu \otimes \vu) : \Grad \vvarphi  + p(\vr)\, \Div_x \vvarphi - \SSS : \Grad \vvarphi \big] } \, \dt'\\
&= \int_0^t \intO{ \TT : \nabla_x\vvarphi - \vr\, \ff \cdot \vvarphi} \, \dt' + \intO{ \vr \vu (t, \cdot) \cdot \vvarphi (t, \cdot) } - \intO{ \vr_{0} \vu_{0} \cdot \vvarphi(0, \cdot) },
\ea
where $\SSS$ is defined by \eqref{i3}.

\item For any $t \in (0,T]$ and any test function $\phi \in \DC([0,T] \times \Ov{\Omega}\times D)$, one has
\ba\label{weak-form4}
&\int_0^t \intO{\int_D \bigg[ \psi \partial_t \phi + \psi \vu \cdot \Grad \phi  + \sum_{i=1}^K (\nabla_x \vu)\, q_i\, \psi \cdot \nabla_{q_i} \phi - \e\nabla_x  \psi \cdot \nabla_x \phi \bigg]\,\dq}\,\dt'\\
&=\frac{1}{4\lambda} \sum_{i=1}^K\sum_{j=1}^K A_{ij}\int_0^t \intO{ \int_D  M\nabla_{q_j}\left(\frac{\psi}{M}\right)\cdot \nabla_{q_i}\phi\ \dq} \, \dt' + \intO{ \int_D \psi \phi (t, \cdot)- \psi_{0}  \phi(0, \cdot)\,\dq }.
\ea

\item The continuity equation holds in the sense of renormalized solutions:
\be\label{weak-renormal}
\d_t b(\vr) +\Div_x (b(\vr)\vu) + (b'(\vr)\vr - b(\vr))\,\Div_x \vu =0 \quad \mbox{in} \ \mathcal{D}' ((0,T]\times \O),
\ee
for any
\be\label{cond-b-renormal}
b\in C^1([0,\infty)),\quad |b'(s)s|+| b(s)| \leq c<\infty \qquad \forall\, s\in [0,\infty).
\ee
\item  Let $\mathcal{F}(s) := s(\log s -1)+1$ for $s>0$ and define $\mathcal{F}(0):=\lim_{s\to 0+} \mathcal{F}(s)=1$. For a.e. $t \in (0,T]$, the
following \emph{energy inequality} holds:
\ba\label{energy}
&\int_{\Omega} \bigg[
\frac{1}{2} \vr |\vu|^2 +  P(\vr) +\de\, \eta^2 + k \int_D M \mathcal{F} (\widetilde \psi)\ \dq\bigg] (t, \cdot) \, \dx\\
&\quad+ \int_0^t \intO{ \mu^S \bigg|\frac{\nabla \vu + \nabla^{\rm T} \vu}{2} - \frac{1}{3} (\Div_x \vu)\, \II\, \bigg|^2 +\mu^B |\Div_x \vu|^2  } \, \dt' + 2\, \e\, \de \int_0^t \intO{   |\nabla_x \eta|^2} \, \dt'\\
&\quad+ \e\, k \int_0^t \intO{ \int_D M\bigg|\nabla_x \sqrt{\widetilde \psi}\, \bigg|^2 \, \dq} \,\dt'
+ \frac{k\,A_0}{4\lambda} \int_0^t \intO{ \int_D M \bigg|\nabla_{q}  \sqrt{\widetilde \psi}\,\bigg|^2 \, \dq} \, \dt'\\
&\leq \int_{\Omega} \bigg[ \frac{1}{2} \vr_{0} |\vu_{0} |^2 +  P(\vr_0) + \de\, \eta_0^2 + k \int_D M \mathcal{F} \bigg(\frac{\psi_0}{M}\bigg)\, \dq\bigg]  \dx + \int_0^t\int_\O \vr \,\ff \cdot \vu \,\,\dx\, \dt',
\ea
 where we have set $P(\vr) := \vr \int_1^\vr p(z)/ z^2 \, {\rm d}z$ and $\widetilde \psi:=\frac{\psi}{M}$.

\end{itemize}

\end{definition}

\begin{remark} Definition \ref{def-weaksl} is fairly standard. The energy
inequality (\ref{energy}) identifies an important class of weak solutions, usually termed \emph{dissipative (finite-energy)}.
We note that, given a \emph{smooth} solution, by tedious but rather straightforward calculations one can obtain the following a priori bound (see (1.22) in \cite{Barrett-Suli}):
\ba\label{energy0}
&\int_{\Omega} \left[
\frac{1}{2} \vr |\vu|^2 +  P(\vr) +\de\, \eta^2 + k \int_D M \mathcal{F} (\widetilde \psi)\, \dq\right] (t, \cdot) \, \dx+ \int_0^t \intO{ \SSS : \Grad \vu\, } \, \dt' + 2\, \e\, \de \int_0^t \intO{   |\nabla_x \eta|^2}\, \dt'\\
& \quad + \e\, k \int_0^t \intO{ \int_D M\left|\nabla_x \sqrt{\widetilde \psi} \,\right|^2 \dq} \, \dt' + \frac{k}{4\lambda} \sum_{i=1}^K\sum_{j=1}^K A_{ij}\int_0^t \intO{ \int_D M \nabla_{q_j}  \sqrt{\widetilde \psi} \cdot \nabla_{q_i}  \sqrt{\widetilde \psi} \, \dq} \, \dt' \\
&\leq \int_{\Omega} \left[
\frac{1}{2} \vr_{0} |\vu_{0} |^2 +  P(\vr_0) + \de\, \eta_0^2 + k \int_D M \mathcal{F} \bigg(\frac{\psi_0}{M}\bigg)\, \dq\right]\!\dx + \int_0^t\int_\O \vr \,\ff \cdot \vu \, \dx\,\dt',\nn
\ea
which then implies (\ref{energy}). Indeed, thanks to the form of the Newtonian stress tensor in \eqref{i3}, direct calculations yield that
$$
\SSS : \Grad \vu = \mu^S \left|\frac{\nabla \vu + \nabla^{\rm T} \vu}{2} - \frac{1}{3} (\Div_x \vu) \II \right|^2 +\mu^B |\Div_x \vu|^2.
$$
Hence, by the positive definiteness of the Rouse matrix $A=(A_{ij})_{1\leq i,j\leq K}$, and recalling that the smallest eigenvalue of $A$ is $A_0>0$, we deduce \eqref{energy}.
\end{remark}

\subsection{Assumptions and main results}

We shall suppose that both $\mu^S$ and $\mu^B$ are $C^1$ functions of the polymer number density $\eta$, and we adopt the following assumptions: there exist positive constants $c_j, \ j=1,\dots,5$, and an $\o\in\R$ such that
\be\label{mu-eta1}
c_{1} (1+\eta)^\o   \leq \mu^S(\eta) \leq c_2 (1+\eta)^\o, \quad  |(\mu^S)'(\eta)| \leq c_3+c_4 (1+\eta)^{\o-1}, \quad 0   \leq \mu^B(\eta) \leq c_5 (1+\eta)^\o
\quad \forall\,\eta\geq 0.
\ee
In addition, for the sake of simplicity, we shall assume that
\be\label{press}
p(\vr) = a \vr^\gamma, \quad a > 0, \ \gamma > \frac32.
\ee

As the complete proof of the existence of dissipative weak solutions in the special case of constant viscosity coefficients is already very long and technical (cf. \cite{Barrett-Suli}), in the more general setting of polymer-number-dependent viscosity coefficients considered here we shall confine ourselves to establishing \emph{weak sequential stability} of the family of dissipative weak solutions, whose existence we shall assume; we shall however indicate in Section \ref{End} the main steps of a possible complete existence proof in the case of polymer-number-density-dependent viscosity coefficients.

Accordingly, the main result of the paper reads as follows.

\begin{theorem}[Weak Sequential Stability]\label{theorem} Let $\{(\vr_n,\vu_n,\psi_n,\eta_n)\}_{n\in \N}$ be a sequence of dissipative (finite-energy) weak solutions in the sense of Definition \ref{def-weaksl} associated with the initial data $\{(\vr_{0,n},\vu_{0,n},\psi_{0,n},\eta_{0,n})\}_{n\in\N}$ satisfying:
\ba\label{cond-ini-n}
&\vr_{0,n} \geq 0 \ \mbox{a.e. in} \ \O, \quad \vr_{0,n} \to \vr_0 \ \mbox{strongly in}\ L^\g(\O);\\
&\vu_{0,n} \to \vu_0 \ \mbox{in}\  L^r(\O;\R^3) \ \mbox{for some $r>1$}\ \mbox{such that}\ \vr_{0,n}|\vu_{0,n}|^2 \to \vr_0|\vu_0|^2 \ \mbox{strongly in}\  L^1(\O);\\
& \psi_{0,n} \geq 0 \ \mbox{a.e. in} \ \O\times D,\quad \psi_{0,n} \to \psi_{0},\quad \psi_{0,n} \left(\log \frac{\psi_{0,n}}{M}\right) \to \psi_0 \left(\log \frac{\psi_0}{M}\right) \ \mbox{strongly in}\  L^1(\O\times D);\\
&\eta_{0,n} = \int_{D}\psi_{0,n} \, \dq \to \eta_0 \ \mbox{strongly in} \ L^2 (\O).
\ea

Let $\vc{f} \in L^\infty((0,T) \times \Omega;\R^3)$.
Suppose that the exponent $\g$ in \eqref{press} and the parameter $\o$ in \eqref{mu-eta1} satisfy
\be\label{mu-eta2}
0\leq \o < \frac{5}{3}, \quad \g>\frac{3}{2}
\ee
or
\be\label{mu-eta3}
 -\frac{4}{3}<\o\leq 0, \quad \g>\frac{6}{4+3\o}.
\ee
Then, there exists a subsequence (not indicated) such that
$$
(\vr_n,\vu_n,\psi_n,\eta_n) \to (\vr,\vu,\psi,\eta) \quad \mbox{as $n\to \infty$, in the sense of distributions (at least weakly in}\ L^1),
$$
where the limit $(\vr,\vu,\psi,\eta)$ is a dissipative (finite-energy) weak solution in the sense of Definition \ref{def-weaksl} associated with the initial data $(\vr_0,\vu_0,\psi_0,\eta_0)$.

\end{theorem}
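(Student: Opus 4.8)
## Proof Strategy

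The plan is to follow the classical Lions--Feireisl weak-compactness scheme for the compressible Navier--Stokes system, adapted to handle (i) the coupling with the Fokker--Planck equation for $\psi$ and the transport-diffusion equation for $\eta$, and (ii) the $\eta$-dependence of the viscosity coefficients $\mu^S,\mu^B$. First I would extract uniform bounds from the energy inequality \eqref{energy}: this gives $\sqrt{\vr_n}\vu_n$ bounded in $L^\infty(0,T;L^2(\Omega))$, $P(\vr_n)$ hence $\vr_n$ bounded in $L^\infty(0,T;L^\gamma(\Omega))$, $\eta_n$ bounded in $L^\infty(0,T;L^2)\cap L^2(0,T;W^{1,2})$, and $M|\nabla_q\sqrt{\widetilde\psi_n}|^2$, $M|\nabla_x\sqrt{\widetilde\psi_n}|^2$ bounded in $L^1$. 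The coercive term $\int \mu^S|\frac{\nabla\vu+\nabla^{\rm T}\vu}{2}-\frac13(\dive\vu)\II|^2$ combined with the lower bound $\mu^S(\eta)\ge c_1(1+\eta)^\omega\ge c_1$ when $\omega\ge 0$ (resp. a lower bound degenerating like $(1+\|\eta\|)^{\omega}$ when $\omega<0$, handled via the $L^2$-bound on $\eta$ and interpolation) yields $\vu_n$ bounded in $L^2(0,T;W^{1,r})$ for a suitable $r>1$; together with Korn's inequality this controls $\nabla\vu_n$. The interplay of $\omega$ and $\gamma$ in \eqref{mu-eta2}--\eqref{mu-eta3} is precisely what is needed so that the convective term $\vr_n\vu_n\otimes\vu_n$, the viscous term $\mu^S(\eta_n)\nabla\vu_n$, and the pressure $a\vr_n^\gamma$ all lie in reflexive spaces with exponents strictly above $1$; I would track these exponents carefully using $P(\vr)\sim\vr^\gamma$, the improved density integrability $\vr_n\in L^{\gamma+\vartheta}$ from the Bogovskii-type pressure estimate, and the bounds on $\eta_n$.

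Next I would pass to the limit in the linear/compactness-friendly quantities. Strong convergence of $\eta_n$ follows from the Aubin--Lions lemma applied to \eqref{weak-form2}: $\eta_n$ is bounded in $L^2(0,T;W^{1,2})$ and $\partial_t\eta_n$ in a negative-order space (from the weak form, using boundedness of $\eta_n\vu_n$), so $\eta_n\to\eta$ strongly in, say, $L^2((0,T)\times\Omega)$ and a.e. This is the crucial gain: it lets me pass to the limit in $\mu^S(\eta_n),\mu^B(\eta_n)$ pointwise a.e., and (since these are dominated by $(1+\eta_n)^\omega$ and hence equi-integrable against $|\nabla\vu_n|^r$ by the exponent conditions) conclude $\mu^S(\eta_n)\,\mathbb D\vu_n\rightharpoonup\mu^S(\eta)\,\mathbb D\vu$ weakly, where $\mathbb D\vu$ denotes the traceless symmetric gradient. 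For the Fokker--Planck equation, strong compactness of $\psi_n$ in $L^1$ is obtained as in \cite{Barrett-Suli}: the entropy bound $\int M\mathcal F(\widetilde\psi_n)$ gives equi-integrability, the spatial and configurational dissipation terms give weak $W^{1,1}$-type control, and a Dubinskii/Aubin--Lions argument in the weighted space $L^1(\Omega\times D; M\,\dq)$ plus a velocity-averaging-free compactness argument (the drift $(\nabla_x\vu)q_i\psi$ is controlled since $q_i$ is bounded on the FENE ball $D_i$ and $\psi_n\in L^1$, while $\nabla_x\vu_n\in L^2$) yields $\psi_n\to\psi$ strongly in $L^1((0,T)\times\Omega\times D)$ and a.e. Consequently $\eta=\int_D\psi\,\dq$, $\CC_i(\psi_n)\to\CC_i(\psi)$ (using \eqref{pt3-Mi-Ui} to control $U_i'(|q_i|^2/2)|q_i|^2$ against $M$, giving equi-integrability), and $\TT(\psi_n)\to\TT(\psi)$ in $L^1$.

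The main obstacle, exactly as in the constant-viscosity theory, is passing to the limit in the pressure: one must show $\vr_n^\gamma\rightharpoonup\overline{\vr^\gamma}=\vr^\gamma$, i.e.\ strong convergence of the density. Here I would use the effective viscous flux identity. With variable viscosity the flux is $p(\vr_n)-(\tfrac43\mu^S(\eta_n)+\mu^B(\eta_n))\dive\vu_n$; since $\eta_n\to\eta$ strongly, the viscosity factors behave like fixed coefficients in the limit, and the commutator argument of Lions (using the operators $\nabla\Delta^{-1}$ and the div-curl / compensated-compactness structure, as in \cite{Feireisl}) goes through, yielding $\overline{p(\vr)\vr}-\overline{(\tfrac43\mu^S(\eta)+\mu^B(\eta))(\dive\vu)\,\vr}=\overline{p(\vr)}\,\vr-(\tfrac43\mu^S(\eta)+\mu^B(\eta))\overline{(\dive\vu)}\,\vr$ after isolating the strongly convergent $\eta$-dependence. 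Combined with the renormalized continuity equation \eqref{weak-renormal} for the limit (justified à la DiPerna--Lions once $\vr\in L^2_{loc}$, which the improved integrability supplies when $\gamma>\tfrac32$) and monotonicity of $p$, one propagates the oscillation defect measure and concludes $\vr_n\to\vr$ a.e. This also requires checking that the extra force $\vr_n\ff$ and the extra-stress contribution $\dive\TT(\psi_n)$ do not spoil the flux computation — they don't, because $\ff\in L^\infty$ and $\TT(\psi_n)$ is bounded in $L^1$ and converges strongly there. Finally, with $\vr_n\to\vr$, $\vu_n\rightharpoonup\vu$, $\eta_n\to\eta$, $\psi_n\to\psi$ all established, I pass to the limit in each weak formulation \eqref{weak-form1}--\eqref{weak-form4} and in \eqref{weak-renormal}, and recover the energy inequality \eqref{energy} for the limit by weak lower semicontinuity of the convex dissipation terms together with the strong convergence of the initial data \eqref{cond-ini-n}; the term $\int\!\!\int\mu^S(\eta_n)|\mathbb D\vu_n|^2$ is lower semicontinuous because $\mu^S(\eta_n)\to\mu^S(\eta)$ strongly in every $L^p$ and $\mathbb D\vu_n\rightharpoonup\mathbb D\vu$ weakly in $L^2$, so $\int\!\!\int\mu^S(\eta)|\mathbb D\vu|^2\le\liminf\int\!\!\int\mu^S(\eta_n)|\mathbb D\vu_n|^2$ by a standard convexity (Ioffe-type) argument.
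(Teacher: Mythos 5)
Your overall architecture — uniform bounds from the energy inequality, Korn's inequality, Aubin--Lions for $\eta_n$, Dubinski\u{\i} compactness for $\psi_n$, effective viscous flux plus renormalized continuity and monotonicity for the density, and weak lower semicontinuity for the energy inequality — matches the paper's scheme. There is, however, a genuine gap in the heart of the argument, namely in the effective viscous flux step, and with it a misattribution of the role of the constraints \eqref{mu-eta2}--\eqref{mu-eta3}.

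You write that ``since $\eta_n\to\eta$ strongly, the viscosity factors behave like fixed coefficients in the limit, and the commutator argument of Lions \ldots\ goes through.'' This does not survive scrutiny. Testing the momentum equation against $\phi\,\vp\,\A[T_k(\vr_n)]$ produces, in the viscous part, terms of the form
$\int \vp\,\mu^S(\eta_n)\,\nabla \vu_n : \RR[T_k(\vr_n)]$,
which one wants to convert into $\int \vp\,\mu^S(\eta_n)\,(\Div_x \vu_n)\,T_k(\vr_n)$ plus a controllable remainder. When $\mu^S$ is constant, one just passes the Riesz operator $\RR_{ij}$ across $\vp\,\nabla\vu_n$ and exploits $\sum_{i,j}\RR_{ij}\partial_j\vu_n^i=\Div_x\vu_n$. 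When $\mu^S=\mu^S(\eta_n)$ is variable, one must instead control the commutator
$R_n^{ij}=\RR_{ij}\bigl[\vp\,\mu^S(\eta_n)\,\partial_j \vu_n^i\bigr]-\vp\,\mu^S(\eta_n)\,\RR_{ij}\bigl[\partial_j\vu_n^i\bigr]$,
and strong $L^q$-convergence of $\eta_n$ alone is not sufficient: one needs a quantitative gain of regularity, which the paper obtains from the Coifman--Meyer type estimate (Lemma \ref{lem-Riesz3}), placing $R_n^{ij}$ uniformly in $W^{\beta,s}$ for some $\beta>0$, $s>1$. That estimate requires $\mu^S(\eta_n)\in W^{1,r_\omega}$ and $\nabla\vu_n\in L^{p_\omega}$ with $\frac{1}{r_\omega}+\frac{1}{p_\omega}-\frac13<1$, and it is \emph{this} inequality (together with the requirement that the weak limit of $R_n^{ij}$ land in $L^p$ for some $p>1$) that produces the upper bound $\omega<\frac53$ when $\omega\ge0$ and the lower bound $\omega>-\frac43$ when $\omega\le0$. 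Your proposal instead attributes the restrictions on $\omega$ to the integrability of the convective, viscous and pressure terms; the paper explicitly points out (Remark \ref{rem-bog-th}) that the weaker conditions $0\le\omega<\frac{10}{3}$ or $-2<\omega\le0$ would suffice for all of the uniform bounds, and that the strict conditions in the theorem are needed precisely for the viscous-flux lemma. Without the commutator estimate (and the time--space div-curl lemma applied to ${\bf U}_n=(T_k(\vr_n),T_k(\vr_n)\vu_n)$ and ${\bf V}_n=(R_n^{ij},0,0,0)$ to pass to the limit in $R_n^{ij}\,T_k(\vr_n)$), the viscous flux identity is not established and the density compactness argument does not close.

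A minor secondary point: you invoke ``DiPerna--Lions once $\vr\in L^2_{\rm loc}$'' to get the renormalized continuity equation for the limit, but the improved integrability $\vr\in L^{\gamma+\vartheta}$ with $\gamma>\frac32$ need not give $\vr\in L^2_{\rm loc}$. The correct route, which the paper follows, is Feireisl's cut-off argument with $T_k(\vr)$ and propagation of the oscillation defect measure, which works for all $\gamma>\frac32$ without the $L^2$ hypothesis.
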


Before embarking on the proof of Theorem \ref{theorem} two remarks are in order.

\begin{remark}\label{rem-thm1} The strong convergence assumptions in \eqref{cond-ini-n} imply that
\ba\label{cond-ini-n1}
\vr_{0,n} \vu_{0,n} \to \vr_{0} \vu_{0}  \quad \mbox{strongly in}\ L^{\frac{2\g}{1+\g}}(\O;\R^3),\qquad \eta_0 = \int_D \psi_0\, \dq  \quad \mbox{a.e. in}\ \O.
\ea

\end{remark}

\begin{remark}\label{rem-thm2} It is important to note that we allow the viscosity coefficients $\mu^B$ and $\mu^S$ to decay to zero as the polymer number density tends to infinity; this is achieved at the expense of assuming a larger adiabatic exponent $\g$; cf. \eqref{mu-eta3}.

\end{remark}

The bulk of the rest of the paper is devoted to the proof of Theorem \ref{theorem}. Comments on the possibility of carrying out a complete proof of the existence
of dissipative (finite-energy) weak solutions are given in Section \ref{End}.  Throughout the rest of the paper, if there is no specification, $c$ will denote a positive constant depending only on the length $T$ of the time interval and the following quantity associated with the initial data:
$$
\sup_{n\in \N}\int_{\Omega} \bigg[
\frac{1}{2} \vr_{0,n} |\vu_{0,n} |^2 +  P(\vr_{0,n}) + \de\, \eta_{0,n}^2 + k \int_D M \mathcal{F} \bigg(\frac{\psi_{0,n}}{M}\bigg)\, \dq\bigg] \dx.
$$
We emphasize, however, that the value of $c$ may vary from line to line.

\section{Preliminaries}

In this section, we recall some concepts that will be used systematically throughout the rest of the paper, including Maxwellian-weighted Lebesgue and Sobolev spaces, embeddings of spaces of Banach-space-valued weakly-continuous functions, the Div-Curl lemma, and Riesz operators.

\subsection{Maxwellian-weighted spaces}\label{sec:def-LMr-HM1}
For any $r \in [1,\infty)$, $L_M^r(D)$ denotes the Maxwellian-weighted Lebesgue space over $D$ with norm
\be\label{def-LMD1}
\|u\|_{L^r_M(D)}:=\left(\int_{ D} M |u(x)|^r\, \dq \right)^{\frac{1}{r}}.
\ee
Similarly, we define $L_M^r(\O\times D):=L^r(\O;L^r_M(D))$ and the Maxwellian-weighted Sobolev spaces
\ba\label{def-SMD2}
H^1_M(D)&:=\left\{ u \in L_{\rm loc}^1( D) :  \|u\|_{H^1_M( D)}^2:=\int_{ D} M\big( |u|^2 + |\nabla_q u|^2 \big)\, \dq<\infty \right\}.\\
H^1_M(\O\times D)&:=\left\{ u \in L_{\rm loc}^1(\O\times D) :  \|u\|_{H^1_M(\O\times D)}^2:=\int_{\O\times D} M\big( |u|^2 + |\nabla_x u|^2 +|\nabla_q u|^2\big)\, \dq \ \dx<\infty \right\}.\nonumber
\ea
The proof of the following lemma can be found in Appendix C and Appendix D in \cite{Barrett-Suli2}.
\begin{lemma}\label{lem:HM1-emb}  The normed spaces $L^r_M (D)$, $L_M^r(\O\times D)$, $H^1_M(D)$ and $H^1_M(\O \times D)$ are Banach spaces.
The embedding $H^1(\O;L_M^2(D)) \hookrightarrow L^6(\O;L_M^2(D))$ is continuous, and the embeddings
$H^1_M(D) \hookrightarrow L_M^2(D)$, $H^1_M(\O\times D)  \hookrightarrow L^2_M(\O\times D)$ are compact.
\end{lemma}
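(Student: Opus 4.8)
The plan is to prove the four assertions separately — completeness of the weighted Lebesgue and Sobolev spaces, the continuous embedding $H^1(\Omega;L^2_M(D))\hookrightarrow L^6(\Omega;L^2_M(D))$, and the two compact embeddings — with only the compactness statements carrying a genuine difficulty. \textbf{Completeness.} Since each $U_i$ is smooth and finite on $[0,b_i/2)$, the Maxwellian $M$ is strictly positive and measurable on $D$ with $\int_D M\,\dq=1$, so $M\,\dq$ is a finite Borel measure; by definition $L^r_M(D)=L^r(D;M\,\dq)$ and $L^r_M(\Omega\times D)=L^r(\Omega\times D;\dx\otimes M\,\dq)$, which are complete by Riesz--Fischer. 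For $H^1_M(D)$ I would take a Cauchy sequence $(u_n)$, extract $u_n\to u$ in $L^2_M(D)$ and $\nabla_q u_n\to v$ in $L^2_M(D;\mathbb{R}^{3K})$, and observe that on any compact $K\Subset D$ the weight $M$ is bounded above and below by positive constants, so these convergences also hold in $L^2_{\rm loc}(D)$; testing against $\phi\in C^\infty_c(D)$ then identifies $v=\nabla_q u$ in the distributional sense, whence $u\in H^1_M(D)$ and $u_n\to u$ there. The case of $H^1_M(\Omega\times D)$ is identical, the $\nabla_x$-component being treated the same way.

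\textbf{The continuous embedding.} For $u\in H^1(\Omega;L^2_M(D))$ I would set $g(x):=\|u(x,\cdot)\|_{L^2_M(D)}$. The norm of a Hilbert space being Lipschitz, $g\in H^1(\Omega)$ with $|\nabla_x g(x)|\le\|\nabla_x u(x,\cdot)\|_{L^2_M(D)}$ for a.e.\ $x$ (chain rule for Sobolev functions). As $\Omega\subset\mathbb{R}^3$ is a bounded Lipschitz domain, the classical Sobolev embedding gives $\|g\|_{L^6(\Omega)}\le C\|g\|_{H^1(\Omega)}$; since $\|u\|_{L^6(\Omega;L^2_M(D))}=\|g\|_{L^6(\Omega)}$ and $\|g\|_{H^1(\Omega)}\le\|u\|_{H^1(\Omega;L^2_M(D))}$, the embedding follows.

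\textbf{The compact embeddings.} I would exhaust $D$ by the increasing family $D^{(k)}:=\prod_{i=1}^K B(0,b_i^{1/2}-1/k)$, bounded Lipschitz domains (products of balls) with $\bigcup_k D^{(k)}=D$, on each of which $M$ is bounded above and below by positive constants, so $\|\cdot\|_{H^1_M(D^{(k)})}$ and $\|\cdot\|_{H^1(D^{(k)})}$ are equivalent; hence the restriction $H^1_M(D)\to L^2(D^{(k)})$ is compact by Rellich--Kondrachov. The one nontrivial ingredient is a \emph{uniform tail bound}: for each $\varepsilon>0$ there is $k$ with $\int_{D\setminus D^{(k)}}M|u|^2\,\dq\le\varepsilon\|u\|^2_{H^1_M(D)}$ for all $u\in H^1_M(D)$. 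Granting this, a diagonal argument extracts from any bounded sequence in $H^1_M(D)$ a subsequence that is Cauchy, hence convergent, in $L^2_M(D)$. To prove the tail bound I would cover $D\setminus D^{(k)}$ by the sets $\{|q_i|>b_i^{1/2}-1/k\}$, $i=1,\dots,K$, slice in the $q_i$-variable, and use the bounds \eqref{pt1-Mi-Ui} — in particular $M_i\sim({\rm dist}(q_i,\partial D_i))^{\theta_i}$ with $\theta_i>1$ and $({\rm dist}(q_i,\partial D_i))\,U_i'\sim1$ — to reduce matters to a one-dimensional Hardy-type inequality for the degenerate weight $s^{\theta_i}$ on a short interval $(0,\delta)$, whose constant tends to $0$ as $\delta\to0$ precisely because $\theta_i>1$. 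For $H^1_M(\Omega\times D)\hookrightarrow L^2_M(\Omega\times D)$ the same $q$-tail bound, now integrated over $\Omega$, reduces the problem to the restrictions to $\Omega\times D^{(k)}$, a bounded Lipschitz domain on which the weighted and unweighted $H^1$-norms are equivalent, so Rellich--Kondrachov applies again.

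\textbf{Main obstacle.} The hard part is the uniform tail estimate near $\partial D$: everything else is a routine assembly of Riesz--Fischer, the scalar Sobolev embedding, and classical Rellich--Kondrachov, whereas the tail bound is exactly where the structural hypothesis $\theta_i>1$ in \eqref{pt1-Mi-Ui} is needed, through a degenerate-weight Hardy inequality. Complete arguments can be found in Appendices C and D of \cite{Barrett-Suli2}.
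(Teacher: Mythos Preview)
Your proposal is correct and aligned with the paper: the paper itself gives no proof of this lemma beyond citing Appendices C and D of \cite{Barrett-Suli2}, which you also cite, and your sketch accurately reproduces the strategy found there (Riesz--Fischer for completeness, scalar Sobolev embedding applied to $x\mapsto\|u(x,\cdot)\|_{L^2_M(D)}$ for the continuous embedding, and exhaustion plus a weighted Hardy/tail estimate near $\partial D$ exploiting $\theta_i>1$ for compactness). Nothing to add.
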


\subsection{On $C_w([0, T];X)$ type spaces}

Let $X$ be a Banach space. We denote by $C_w([0, T];X)$ the set of all functions $u\in L^\infty(0, T;X)$ such that the mapping
$t \in  [0, T] \mapsto \langle \phi,u(t)\rangle_{X}\in \R$ is continuous on $[0, T]$ for all $\phi \in X'$. Here and throughout the paper, we use $X'$ to denote the
dual space of $X$, and $\langle \cdot,\cdot \rangle_{X}$ to denote the duality pairing between $X'$ and $X$.

Whenever $X$ has a predual $E$, in the sense that $E'=X$, we denote by $C_{w*}([0, T];X)$ the set of all functions $u\in L^\infty(0, T;X)$ such that the mapping $t \in  [0, T] \mapsto \langle u(t),\phi\rangle_{E}\in \R$ is continuous on $[0, T]$ for all $\phi \in E$. We reproduce Lemma 3.1 from \cite{Barrett-Suli}.
\begin{lemma}\label{lem-Cw-Cw*}
Suppose that $X$ and $Y$ are Banach spaces.
\begin{itemize}
\item[(i)] Assume that the space $X$ is reflexive and is continuously embedded in the space $Y$; then,
$$ L^\infty(0, T;X) \cap C_w([0, T];Y) = C_w([0, T];X).$$
\item[(ii)] Assume that $X$ has a separable predual $E$ and $Y$ has a predual $F$ such that $F$ is continuously
embedded in $E$; then,
   $$ L^\infty(0, T;X) \cap C_{w*}([0, T];Y) = C_{w*}([0, T];X).$$
\end{itemize}

\end{lemma}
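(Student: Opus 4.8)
The plan is to prove the two set equalities by establishing each inclusion separately; in both parts the inclusion ``$\subseteq$'' reading left-to-right is immediate from the definitions, and the whole content lies in the reverse inclusion. For part (i), if $i\colon X\hookrightarrow Y$ is the continuous embedding, its adjoint $i^{*}\colon Y'\to X'$ satisfies $\langle \phi,x\rangle_{Y}=\langle i^{*}\phi,x\rangle_{X}$ for $x\in X$, $\phi\in Y'$; hence if $t\mapsto\langle\psi,u(t)\rangle_{X}$ is continuous for every $\psi\in X'$, then in particular it is continuous for every $\psi$ of the form $i^{*}\phi$, which is exactly weak continuity as a $Y$-valued map, while membership in $L^{\infty}(0,T;X)$ is part of the definition of $C_{w}([0,T];X)$. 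For part (ii) one argues identically with the adjoint $j^{*}\colon E'=X\to F'=Y$ of the embedding $j\colon F\hookrightarrow E$, using $\langle u(t),\phi\rangle_{F}=\langle u(t),j\phi\rangle_{E}$ for $\phi\in F$.

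So fix $u\in L^{\infty}(0,T;X)\cap C_{w}([0,T];Y)$ and work with the representative of $u$ that is weakly continuous as a $Y$-valued map; put $M:=\|u\|_{L^{\infty}(0,T;X)}$, so $u(t)\in X$ with $\|u(t)\|_{X}\le M$ for a.e.\ $t$. The first step --- and, I expect, the only genuinely delicate point of the whole argument --- is to upgrade this to a pointwise statement:
\[
u(t)\in X\quad\text{and}\quad \|u(t)\|_{X}\le M\qquad\text{for every }t\in[0,T].
\]
To prove it, fix $t_{0}$ and choose $t_{n}\to t_{0}$ with $u(t_{n})\in X$ and $\|u(t_{n})\|_{X}\le M$ (possible since these $t$'s form a set of full measure, hence are dense in $[0,T]$). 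The sequence $\{u(t_{n})\}$ is bounded in $X$; passing to the closed linear span of its values, which is a separable reflexive Banach space, and using that bounded sequences in such spaces have weakly convergent subsequences, we extract $u(t_{n_{k}})\rightharpoonup x$ weakly in $X$ with $x\in X$ and $\|x\|_{X}\le M$ (closed balls being weakly closed). Since $i\colon X\hookrightarrow Y$ is continuous, weak convergence in $X$ yields weak convergence in $Y$, so $u(t_{n_{k}})\rightharpoonup x$ in $Y$; but $u(t_{n_{k}})\rightharpoonup u(t_{0})$ in $Y$ by the assumed $Y$-weak continuity, and weak limits in $Y$ are unique, whence $x=u(t_{0})$. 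This gives the claimed pointwise bound, and in particular $u\in L^{\infty}(0,T;X)$ with a genuine everywhere bound.

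For the second step it remains to show that $t\mapsto\langle\psi,u(t)\rangle_{X}$ is continuous on $[0,T]$ for every $\psi\in X'$; since $[0,T]$ is a metric space, it suffices to show that $t_{n}\to t_{0}$ forces $u(t_{n})\rightharpoonup u(t_{0})$ weakly in $X$. If this failed, then along some subsequence $|\langle\psi,u(t_{n})-u(t_{0})\rangle_{X}|\ge\varepsilon>0$ for a fixed $\psi\in X'$; but by the pointwise bound this subsequence is bounded in $X$, so, exactly as above, a further subsequence converges weakly in $X$ to some limit which, by passing to $Y$ and invoking uniqueness of weak limits there, must again be $u(t_{0})$ --- contradicting $|\langle\psi,u(t_{n})-u(t_{0})\rangle_{X}|\ge\varepsilon$. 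This proves (i). Part (ii) is proved by the same two-step scheme, with ``$X$ reflexive'' replaced by ``$E$ separable'' and the extraction of a weakly convergent subsequence replaced by the extraction of a weak-$*$ convergent subsequence in $X=E'$ (legitimate because bounded subsets of $E'$ are weak-$*$ metrizable by separability of $E$ and weak-$*$ compact by Banach--Alaoglu), the embedding $i$ replaced by $j^{*}\colon E'\to F'$, along which weak-$*$ convergence in $E'$ passes to weak-$*$ convergence in $F'$, weak lower semicontinuity of $\|\cdot\|_{X}$ replaced by weak-$*$ lower semicontinuity of $\|\cdot\|_{E'}$, and uniqueness of weak limits in $Y$ replaced by uniqueness of weak-$*$ limits in $F'$. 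The crux in both parts is the passage from an essential-supremum bound to an everywhere-defined pointwise bound, and it is precisely there that reflexivity of $X$, respectively separability of the predual $E$, is indispensable.
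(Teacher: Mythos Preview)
The paper does not actually prove this lemma: it simply states it and cites Lemma~3.1 of \cite{Barrett-Suli} for the proof. There is therefore nothing in the paper to compare your argument against.

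Your proof is correct and essentially complete. The two-step scheme --- first upgrading the a.e.\ bound $\|u(t)\|_X\le M$ to a pointwise bound by extracting a weakly (respectively weak-$*$) convergent subsequence and identifying its limit via the $Y$-weak (respectively $Y$-weak-$*$) continuity, and then proving weak (weak-$*$) continuity in $X$ by the same extraction-plus-identification along any sequence $t_n\to t_0$ --- is exactly the standard way these results are proved. Your use of the separable closed span to get sequential weak compactness in part~(i), and of separability of $E$ together with Banach--Alaoglu to get weak-$*$ sequential compactness in part~(ii), is the right tool in each case. The one implicit assumption worth flagging is that in part~(ii) the map $j^{*}\colon E'\to F'$ is injective (equivalently, $j(F)$ is dense in $E$), without which the identification $j^{*}x=u(t_0)$ does not quite yield $u(t_0)\in X$ unambiguously; but this is a standard tacit convention when one writes an intersection like $L^{\infty}(0,T;X)\cap C_{w*}([0,T];Y)$, and is satisfied in all applications of the lemma in the paper.
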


We recall an Arzel\`{a}--Ascoli type result in $C_w([0,T];L^s(\O))$. We refer to Lemma 6.2 in \cite{N-book} for the proof.
\begin{lemma}\label{lem-Cw} Let $r, s \in (1,\infty)$ and let $\O$ be a bounded Lipschitz domain in $\R^d, \ d\geq 2$. Suppose  $\{g_n\}_{n\in \N}$ is a sequence of functions in $C_w([0, T]; L^s(\O))$ such that $\{g_n\}_{n\in \N}$ is bounded in $C([0, T]; W^{-1,r}(\O)) \cap \ L^\infty(0, T;L^s(\O))$. Then, there exists a subsequence (not indicated) such that the following hold:
\begin{itemize}
\item[(i)] $g_n\to g$ in $C_w([0, T]; L^s(\O))$;
\item[(ii)] If, in addition, $r\leq \frac{d}{d-1}$, or $r>\frac{d}{d-1}$ and $s>\frac{d\,r}{d+r}$, then $g_n\to g$ strongly in $C([0, T]; W^{-1,r}(\O))$.
\end{itemize}
\end{lemma}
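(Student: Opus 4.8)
The plan is to prove (i) by regarding each $g_n$ as a continuous curve from the compact interval $[0,T]$ into a fixed \emph{weakly compact} ball of $L^s(\Omega)$, metrized so as to induce the weak topology, and then invoking the classical Arzel\`a--Ascoli theorem. Set $R:=\sup_n\|g_n\|_{L^\infty(0,T;L^s(\Omega))}<\infty$; since each $g_n\in C_w([0,T];L^s(\Omega))$, the function $t\mapsto\|g_n(t)\|_{L^s}$ is lower semicontinuous (a supremum of continuous functions $t\mapsto\langle g_n(t),\phi\rangle$), so from $\|g_n(t)\|_{L^s}\le R$ a.e.\ we in fact get $g_n(t)\in B_R:=\{f\in L^s(\Omega):\|f\|_{L^s}\le R\}$ for \emph{all} $t$. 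Because $s\in(1,\infty)$, $L^s(\Omega)$ is reflexive and $L^{s'}(\Omega)$ is separable; hence $B_R$ is weakly compact and the weak topology on it is metrizable, say by $d_w(f,h):=\sum_{j\ge1}2^{-j}|\langle f-h,\phi_j\rangle|$ for a countable set $\{\phi_j\}\subset L^{s'}(\Omega)$, $\|\phi_j\|_{L^{s'}}\le1$, dense in the unit ball of $L^{s'}$. Each $g_n:[0,T]\to(B_R,d_w)$ is continuous by definition of $C_w$; the remaining ingredient is equicontinuity of $\{g_n\}$ in $(B_R,d_w)$. For this, fix $j$, pick $\widetilde\phi\in C_c^\infty(\Omega)\subset W_0^{1,r'}(\Omega)$ with $\|\phi_j-\widetilde\phi\|_{L^{s'}}$ small, and estimate
\[
|\langle g_n(t)-g_n(\tau),\phi_j\rangle|\le \|g_n(t)-g_n(\tau)\|_{W^{-1,r}(\Omega)}\,\|\widetilde\phi\|_{W^{1,r'}(\Omega)}+2R\,\|\phi_j-\widetilde\phi\|_{L^{s'}(\Omega)},
\]
so the control of $\{g_n\}$ in $C([0,T];W^{-1,r}(\Omega))$ — read, as in the applications, as a \emph{uniform-in-$n$ modulus of time-continuity} in $W^{-1,r}$ (see the last paragraph) — gives the equicontinuity. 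Arzel\`a--Ascoli then yields a subsequence $g_{n_k}\to g$ in $C([0,T];(B_R,d_w))$; unwinding $d_w$ and using the density of $\{\phi_j\}$, this says $\sup_{t\in[0,T]}|\langle g_{n_k}(t)-g(t),\phi\rangle|\to0$ for every $\phi\in L^{s'}(\Omega)$, with $g(t)\in B_R$ and $t\mapsto\langle g(t),\phi\rangle$ continuous as a uniform limit of continuous functions. Hence $g\in C_w([0,T];L^s(\Omega))$ and $g_{n_k}\to g$ there, which is (i).

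For (ii) the observation is that the arithmetic conditions on $(r,s,d)$ are exactly those under which $L^s(\Omega)\hookrightarrow W^{-1,r}(\Omega)$ is a \emph{compact} embedding. Indeed, identifying $W^{-1,r}(\Omega)=(W_0^{1,r'}(\Omega))'$ and using that $W_0^{1,r'}$ is reflexive, Schauder's theorem reduces this to compactness of the embedding $W_0^{1,r'}(\Omega)\hookrightarrow L^{s'}(\Omega)$; by Rellich--Kondrachov on the bounded Lipschitz domain $\Omega$ this holds for all $s'<\infty$ (i.e.\ all $s>1$) when $r'\ge d$, which is $r\le\frac{d}{d-1}$, and for $s'<\frac{dr'}{d-r'}$ when $r'<d$, and the inequality $s'<\frac{dr'}{d-r'}$ rearranges to $\frac1s<\frac1r+\frac1d$, i.e.\ $s>\frac{dr}{d+r}$ — precisely the stated condition. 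Granting this compact embedding, from (i) we have $g_{n_k}(t)\rightharpoonup g(t)$ weakly in $L^s$ for each fixed $t$, hence $g_{n_k}(t)\to g(t)$ \emph{strongly} in $W^{-1,r}(\Omega)$ for each fixed $t$. To make this uniform in $t$, argue by contradiction: if $\sup_t\|g_{n_k}(t)-g(t)\|_{W^{-1,r}}\not\to0$ there are $\delta>0$ and $t_k\to t^*$ (along a relabelled subsequence) with $\|g_{n_k}(t_k)-g(t_k)\|_{W^{-1,r}}\ge\delta$; but the common time-modulus of continuity of $\{g_{n_k}\}$ (and of $g$, inherited by lower semicontinuity of the norm under the weak limit) lets us replace $t_k$ by $t^*$ up to an error $\to0$, and pointwise strong convergence at $t^*$ then forces $\|g_{n_k}(t_k)-g(t_k)\|_{W^{-1,r}}\to0$, a contradiction. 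Thus $g_{n_k}\to g$ in $C([0,T];W^{-1,r}(\Omega))$, which is (ii).

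The crux — and the only genuinely delicate point — is the \emph{time-equicontinuity in $W^{-1,r}$}: mere boundedness of $\{g_n\}$ in $C([0,T];W^{-1,r})$ does not suffice, since an oscillating family $g_n(t,x)=\sin(nt)\,h(x)$ is bounded in $C([0,T];W^{-1,r})\cap L^\infty(0,T;L^s)$ yet has no $C_w([0,T];L^s)$-convergent subsequence; one really invokes a uniform modulus $\|g_n(t)-g_n(\tau)\|_{W^{-1,r}}\le\varpi(|t-\tau|)$ with $\varpi(0^+)=0$. In the applications this modulus is Lipschitz and comes for free from a uniform bound on the distributional time derivative $\partial_t g_n$ in a negative-order space furnished by the underlying evolution equation — here $g_n$ will be $\vr_n$, $\vr_n\vu_n$, $\eta_n$ or $\psi_n$, and the relevant bound is read off from \eqref{weak-form1}--\eqref{weak-form4}. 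Everything else is soft: reflexivity of $L^s$ and separability of $L^{s'}$ (both from $s\in(1,\infty)$) for the metrizable weak compactness of $B_R$; density of $C_c^\infty(\Omega)$ in $L^{s'}(\Omega)$ together with $C_c^\infty(\Omega)\subset W_0^{1,r'}(\Omega)$ for the pairing estimate; and Rellich--Kondrachov for part (ii).
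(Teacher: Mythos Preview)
The paper does not prove this lemma at all: it simply states ``We refer to Lemma 6.2 in \cite{N-book} for the proof.'' So there is no in-paper argument to compare against; your write-up is therefore an addition rather than an alternative.

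Your argument is correct, and you have put your finger precisely on the one subtlety that the paper's formulation glosses over. As you observe, mere \emph{boundedness} of $\{g_n\}$ in $C([0,T];W^{-1,r}(\Omega))$ is not enough to run Arzel\`a--Ascoli --- your $\sin(nt)h(x)$ counterexample is decisive --- and one needs a uniform modulus of continuity in time with values in $W^{-1,r}$. In every application the paper makes of this lemma (to $\vr_n$, $\vr_n\vu_n$, $\eta_n$), the hypothesis actually verified is a uniform bound on $\partial_t g_n$ in some $L^p(0,T;W^{-1,r})$ or $L^p(0,T;X')$ space (cf.\ \eqref{vrn-est}, \eqref{etan-est1}, \eqref{est-sum2}), which produces exactly the H\"older/Lipschitz modulus you invoke. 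The standard reference (Novotn\'y--Stra\v{s}kraba, Lemma~6.2) indeed builds this equicontinuity into the hypotheses; the paper's paraphrase is a slight imprecision, not an error in the mathematics.

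The remaining steps --- metrizing the weak topology on the closed ball $B_R\subset L^s$, transferring equicontinuity from $W^{-1,r}$ to $(B_R,d_w)$ via density of $C_c^\infty$ in $L^{s'}$, Arzel\`a--Ascoli for (i), and Rellich--Kondrachov dualized through Schauder for the compact embedding $L^s\hookrightarrow W^{-1,r}$ in (ii) --- are all carried out correctly. Your verification that the arithmetic conditions on $(r,s,d)$ match exactly the Rellich--Kondrachov threshold for $W_0^{1,r'}\hookrightarrow\hookrightarrow L^{s'}$ is clean, and the contradiction argument for upgrading pointwise-in-$t$ strong convergence to uniform-in-$t$ convergence is the standard one.
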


\subsection{Div-Curl lemma}

We recall the celebrated Div-Curl lemma due to L. Tartar \cite{L.Tartar}.
\begin{lemma}\label{lem-div-curl2}
Let $Q\subset \R^d$ be a domain and let $\{({\bf U}_n,{\bf V}_n)\}_{n\in \N}$ be a sequence of functions such that
$$
{\bf U}_n \to {\bf U} \ \mbox{weakly in $L^p(Q;\R^d)$}, \quad {\bf V}_n \to {\bf V} \ \mbox{weakly in $L^{q}(Q;\R^d)$}, \quad \mbox{as $n\to \infty$},
$$
where
$
\frac{1}{p}+\frac 1q =\frac{1}{r}<1.
$
Suppose in addition that, for some $s>0$,
$$
\{\Div \,{\bf U}_n\}_{n\in \N} \ \mbox{is precompact in $W^{-1,s}(Q)$},\quad \{{\rm curl} \,{\bf V}_n\}_{n\in \N} \ \mbox{is precompact in $W^{-1,s}(Q;\R^{d\times d})$}.
$$
Then,
$
{\bf U}_n \cdot {\bf V}_n \to {\bf U} \cdot {\bf V} \ \mbox{weakly in}\ L^r(Q).
$

\end{lemma}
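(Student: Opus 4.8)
The plan is to reduce to a local statement and then exploit the Helmholtz (Hodge) decomposition, following Murat and Tartar. First, since $\tfrac1p+\tfrac1q=\tfrac1r$, H\"older's inequality shows that $\{{\bf U}_n\cdot{\bf V}_n\}_{n\in\N}$ is bounded in $L^r(Q)$, so it suffices to prove $\int_Q\varphi\,{\bf U}_n\cdot{\bf V}_n\,\dx\to\int_Q\varphi\,{\bf U}\cdot{\bf V}\,\dx$ for every $\varphi\in C^\infty_c(Q)$. Fixing such a $\varphi$ and a cut-off $\psi\in C^\infty_c(Q)$ equal to $1$ near $\supp\varphi$, I would replace ${\bf U}_n,{\bf V}_n$ by $\psi{\bf U}_n,\psi{\bf V}_n$ extended by zero: this leaves $\int\varphi\,{\bf U}_n\cdot{\bf V}_n\,\dx$ unchanged, transforms the weak limits accordingly, and preserves the precompactness hypotheses, because $\Div(\psi{\bf U}_n)=\psi\,\Div{\bf U}_n+\nabla\psi\cdot{\bf U}_n$ with the first summand still precompact in $W^{-1,s}$ (multiplication by a fixed smooth function is continuous there) and the second bounded in $L^p$, hence precompact in $W^{-1,p}$ on the fixed ball by Rellich--Kondrachov duality; similarly for $\mathrm{curl}(\psi{\bf V}_n)$. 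Since $\{\Div{\bf U}_n\}$ and $\{\mathrm{curl}\,{\bf V}_n\}$ are automatically bounded in $W^{-1,p}$ and $W^{-1,q}$, interpolating with the assumed $W^{-1,s}$-precompactness lets me assume from now on that both are precompact in $W^{-1,\sigma}$ for some $\sigma\in(1,\min\{p,q\})$ — this range being needed so that the Riesz transforms act boundedly. Thus everything may be taken supported in a fixed ball of $\R^d$.

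Next I would apply the Helmholtz decomposition on $\R^d$. Writing $\mathcal{A}:=\nabla\Delta^{-1}\Div$ (the zero-homogeneous Fourier multiplier with symbol $\xi\otimes\xi/|\xi|^2$) and $\mathcal{B}:=\mathrm{Id}-\mathcal{A}$, both bounded on $L^t(\R^d)$ for $1<t<\infty$, set
\[
{\bf U}_n=\nabla h_n+{\bf U}_n^{\flat},\qquad {\bf V}_n=\nabla g_n+{\bf V}_n^{\sharp},
\]
with $\nabla h_n=\mathcal{A}{\bf U}_n$, $\nabla g_n=\mathcal{A}{\bf V}_n$ curl-free and ${\bf U}_n^{\flat}=\mathcal{B}{\bf U}_n$, ${\bf V}_n^{\sharp}=\mathcal{B}{\bf V}_n$ divergence-free. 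Continuity of $\mathcal{A},\mathcal{B}$ and uniqueness of weak limits give $\nabla h_n\rightharpoonup\nabla h:=\mathcal{A}{\bf U}$, ${\bf U}_n^{\flat}\rightharpoonup{\bf U}^{\flat}:=\mathcal{B}{\bf U}$ weakly in $L^p$, and $\nabla g_n\rightharpoonup\nabla g:=\mathcal{A}{\bf V}$, ${\bf V}_n^{\sharp}\rightharpoonup{\bf V}^{\sharp}:=\mathcal{B}{\bf V}$ weakly in $L^q$. The decisive compactness comes from the controlled quantities: $\nabla h_n=\nabla\Delta^{-1}\Div{\bf U}_n$ and $\nabla\Delta^{-1}$ (a matrix of double Riesz transforms) maps $W^{-1,\sigma}$ boundedly into $L^\sigma$, so precompactness of $\{\Div{\bf U}_n\}$ forces $\nabla h_n\to\nabla h$ \emph{strongly} in $L^\sigma$, hence, after interpolation with the $L^p$-bound, strongly in $L^{t_1}$ for every $t_1<p$; dually, the identity $[{\bf V}_n^{\sharp}]_k=-\sum_j\partial_j(-\Delta)^{-1}[\mathrm{curl}\,{\bf V}_n]_{jk}$ (a one-line Fourier computation) together with the $W^{-1,\sigma}\to L^\sigma$ boundedness of $\partial_j(-\Delta)^{-1}$ forces ${\bf V}_n^{\sharp}\to{\bf V}^{\sharp}$ strongly in $L^\sigma$, hence strongly in $L^{t_2}$ for every $t_2<q$. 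Finally, $\nabla g_n$ is bounded in $L^q$, so, after fixing additive constants, $g_n$ is bounded in $W^{1,q}$ on the ball and thus $g_n\to g$ strongly in $L^q_{\mathrm{loc}}$ by Rellich--Kondrachov.

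I would then pass to the limit in
\[
\int_{\R^d}\varphi\,{\bf U}_n\cdot{\bf V}_n\,\dx=\int_{\R^d}\varphi\,\nabla h_n\cdot\nabla g_n\,\dx+\int_{\R^d}\varphi\,\nabla h_n\cdot{\bf V}_n^{\sharp}\,\dx+\int_{\R^d}\varphi\,{\bf U}_n^{\flat}\cdot\nabla g_n\,\dx+\int_{\R^d}\varphi\,{\bf U}_n^{\flat}\cdot{\bf V}_n^{\sharp}\,\dx
\]
term by term. For the first term, choosing $t_1<p$ so close to $p$ that $\tfrac1{t_1}+\tfrac1q<1$ (possible since $\tfrac1p+\tfrac1q=\tfrac1r<1$) and pairing the strong $L^{t_1}$-convergence of $\nabla h_n$ with the weak $L^q$-convergence of $\nabla g_n$ yields the limit $\int\varphi\,\nabla h\cdot\nabla g\,\dx$; the second and fourth terms are identical, using strong convergence of $\nabla h_n$ (resp. ${\bf V}_n^{\sharp}$) in an $L^t$ with $t$ close to $p$ (resp. $q$). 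The third term is the genuinely compensated one, since \emph{neither} factor converges strongly; here I would integrate by parts and use $\Div{\bf U}_n^{\flat}=0$:
\[
\int_{\R^d}\varphi\,{\bf U}_n^{\flat}\cdot\nabla g_n\,\dx=-\int_{\R^d}g_n\,({\bf U}_n^{\flat}\cdot\nabla\varphi)\,\dx,
\]
after which $g_n\nabla\varphi\to g\nabla\varphi$ strongly in $L^q$, ${\bf U}_n^{\flat}\rightharpoonup{\bf U}^{\flat}$ weakly in $L^p$, and $\tfrac1p+\tfrac1q<1$ give convergence to $-\int g\,({\bf U}^{\flat}\cdot\nabla\varphi)\,\dx=\int\varphi\,{\bf U}^{\flat}\cdot\nabla g\,\dx$. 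Summing the four limits and noting that $\nabla h,{\bf U}^{\flat},\nabla g,{\bf V}^{\sharp}$ are precisely the Helmholtz components of ${\bf U}$ and ${\bf V}$ gives $\int\varphi\,{\bf U}_n\cdot{\bf V}_n\,\dx\to\int\varphi\,{\bf U}\cdot{\bf V}\,\dx$; combined with the $L^r$-boundedness of the products, this is exactly ${\bf U}_n\cdot{\bf V}_n\rightharpoonup{\bf U}\cdot{\bf V}$ weakly in $L^r(Q)$.

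I expect the genuine obstacle to be the third term, where the divergence-free field ${\bf U}_n^{\flat}$ has uncontrolled curl and the curl-free field $\nabla g_n$ has uncontrolled divergence, so that no factor is individually compact; the cancellation surfaces only after integrating by parts, where $\Div{\bf U}_n^{\flat}=0$ removes the dangerous term and the gradient structure of $\nabla g_n$ trades one derivative for Rellich compactness of $g_n$ in $L^q$. The remaining points — upgrading the $W^{-1,s}$-precompactness to an exponent $\sigma>1$ so that the Riesz operators apply, and using the \emph{strict} inequality $\tfrac1p+\tfrac1q<1$ to leave room when perturbing the H\"older exponents — are routine.
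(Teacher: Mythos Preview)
The paper does not give its own proof of this lemma: it is simply recalled as ``the celebrated Div-Curl lemma due to L.~Tartar'' with a citation, and then used as a black box in the effective viscous flux argument. There is therefore nothing to compare against at the level of proof.

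Your argument is the standard Murat--Tartar route via localization and Helmholtz decomposition, and it is essentially correct. The key steps --- interpolating the $W^{-1,s}$-precompactness against the automatic $W^{-1,p}$ (resp.\ $W^{-1,q}$) bound to reach a reflexive exponent $\sigma\in(1,\min\{p,q\})$, reading off strong $L^\sigma$-convergence of $\nabla h_n$ and of ${\bf V}_n^{\sharp}$ from the order-$(-1)$ Fourier multipliers $\nabla\Delta^{-1}$ and $\partial_j\Delta^{-1}$, and handling the genuinely compensated cross term $\int\varphi\,{\bf U}_n^{\flat}\cdot\nabla g_n$ by integrating by parts and invoking Rellich on $g_n$ --- are all sound. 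One small point worth making explicit: when you normalise $g_n$ by subtracting its mean over the ball to apply Poincar\'e and Rellich, the resulting additive constants are harmless in $-\int g_n\,({\bf U}_n^{\flat}\cdot\nabla\varphi)\,\dx$ precisely because $\int {\bf U}_n^{\flat}\cdot\nabla\varphi\,\dx=-\int\varphi\,\Div{\bf U}_n^{\flat}\,\dx=0$; you allude to ``fixing additive constants'' but do not say why this is innocuous, and it is the divergence-free structure of ${\bf U}_n^{\flat}$ that makes it so.
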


\subsection{On Riesz type operators}\label{sec:Riesz}

The Riesz operator $\RR_{j}$, $1\leq  j \leq d$, in $\R^d$ is defined as a Fourier integral operator with symbol $\frac{\xi_j}{|\xi|}$. That is, for any $u\in \mathcal{S}'(\R^d)$, where $\mathcal{S}'(\R^d)$ denotes the space of tempered distributions on $\R^d$, $\RR_j$ is defined by
$$
\RR_{j}[u] := \mathfrak{F}^{-1}\left[\frac{\xi_j}{|\xi|} \mathfrak{F}[u]\right],
$$
where $\mathfrak{F}$ is the Fourier transform and $\mathfrak{F}^{-1}$ is the inverse Fourier transform. We then define, for any $u\in \mathcal{S}'(\R^d)$,
$$
\RR_{ij} [u]:=\RR_i \circ \RR_j u= \mathfrak{F}^{-1}\left[\frac{\xi_i\xi_j}{|\xi|^2} \mathfrak{F}[u]\right],\quad 1\leq i,j\leq d.
$$

We define $\A_j$ by
$
\A_{j} [u] :=- \mathfrak{F}^{-1}\left[\frac{i\xi_j}{|\xi|^2} \mathfrak{F}[u]\right].
$
Since the derivative $\d_j$ and the Laplacian $\Delta$ can be seen as Fourier integral operators with symbols $i\xi_j$ and $-|\xi|^{-2}$, respectively, we can write
$$
\RR_{ij} = \d_i\d_j{\Delta}^{-1},\quad  \A_j = -\d_{j} \Delta^{-1}.
$$

Let the matrix-valued operator $\RR$ and the vector-valued operator $\A$ be defined by
\be\label{def-Reize}
\RR=(\RR_{ij})_{i,j=1}^d=(\nabla\otimes\nabla) \Delta^{-1},\quad \A:=(A_j)_{j=1}^d = - \nabla \Delta^{-1}.
\ee
We have
$
\RR_{ij}= - \d_i \A_j,\ \sum_{j=1}^d \RR_{jj}= -\sum_{j}\d_j\A_j =\II.
$ By Theorem 1.55 and Theorem 1.57 in \cite{N-book} the following result holds.
\begin{lemma}\label{lem-Riesz1} For any $p \in (1,\infty)$ the operators $\RR_j$ and $\RR_{ij}$, $1\leq i, j\leq d$, are bounded from $L^p(\R^d)$ to $L^p(\R^d)$. That is,
there exists a positive constant $c=c(p,d)$ such that
$$
\|\RR_j [u] \|_{L^p(\R^d)}+  \|\RR_{ij} [u] \|_{L^p(\R^d)} \leq c(p,d) \|u \|_{L^p(\R^d)}\qquad \forall\,u\in L^p(\R^d).
$$
Moreover, for any $u\in L^p(\R^d)$, $v\in L^{p'}(\R^d)$, $1<p<\infty$, we have
$$
\int_{\R^d} \RR_{ij}[u]\,  v \, \dx=\int_{\R^d} u \,\RR_{ij}[v]\, \dx.
$$
Further, for any $p \in (1,d)$, $\A_j$ is bounded from $L^p(\R^d)$ to $L^{\frac{dp}{d-p}}(\R^d)$; that is, there exists a positive constant $c=c(p,d)$ such that
$$
\|\A_j [u] \|_{L^{\frac{dp}{d-p}}(\R^d)} \leq c(p,d) \|u \|_{L^p(\R^d)}\qquad \forall\,u\in L^p(\R^d).
$$
\end{lemma}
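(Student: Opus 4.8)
The plan is to recognize all three assertions as classical facts from the theory of singular integral operators and fractional integration, and to indicate the standard route to each; below $c$ denotes a constant depending only on $p$ and $d$.

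\textbf{Boundedness of $\RR_j$ and $\RR_{ij}$.} The symbol $m_j(\xi):=\xi_j/|\xi|$ is $C^\infty$ on $\R^d\setminus\{0\}$ and homogeneous of degree $0$, hence satisfies the Mikhlin--H\"ormander estimates $|\d^\a m_j(\xi)|\le c_\a|\xi|^{-|\a|}$ for every multi-index $\a$; by the Mikhlin multiplier theorem $\RR_j$ extends from $\mathcal S(\R^d)$ to a bounded operator on $L^p(\R^d)$ for every $p\in(1,\infty)$. Equivalently, one identifies $\RR_j$ with the Calder\'on--Zygmund convolution operator whose kernel is the principal value of $c_d\,x_j/|x|^{d+1}$: the $L^2$-bound is immediate from $|m_j|\le 1$ and Plancherel's theorem, the kernel satisfies the standard size and cancellation (H\"ormander) conditions, so the Calder\'on--Zygmund theorem yields the weak-$(1,1)$ estimate, and Marcinkiewicz interpolation together with duality give the strong $(p,p)$ bounds for all $p\in(1,\infty)$. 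Since $\RR_{ij}=\RR_i\circ\RR_j$ is a composition of two such operators it is bounded on $L^p$ as well; alternatively one argues directly, $\xi_i\xi_j/|\xi|^2$ being again a Mikhlin multiplier.

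\textbf{The duality identity.} For $u,v\in\mathcal S(\R^d)$, Parseval's formula gives $\int_{\R^d}\RR_{ij}[u]\,v\,\dx=\int_{\R^d}\frac{\xi_i\xi_j}{|\xi|^2}\,\widehat u(\xi)\,\widehat v(-\xi)\,{\rm d}\xi$. The multiplier $\xi\mapsto\xi_i\xi_j/|\xi|^2$ is even, so the last integral is invariant under $\xi\mapsto-\xi$ and symmetric in $u$ and $v$; reading Parseval's formula backwards it equals $\int_{\R^d}u\,\RR_{ij}[v]\,\dx$. For general $u\in L^p(\R^d)$ and $v\in L^{p'}(\R^d)$ one approximates $u$ in $L^p$ and $v$ in $L^{p'}$ by Schwartz functions; since $\RR_{ij}$ is bounded on both $L^p$ and $L^{p'}$ by the previous step, H\"older's inequality lets one pass to the limit on both sides, proving the identity in full generality.

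\textbf{Mapping property of $\A_j$.} The operator $\A_j=-\d_j\Delta^{-1}$ is convolution against a constant multiple of $x_j/|x|^d$, whose modulus is bounded by $|x|^{-(d-1)}$; hence $|\A_j[u](x)|\le c\,(|x|^{-(d-1)}*|u|)(x)$, and $|x|^{-(d-1)}$ is (a constant times) the kernel of the Riesz potential $I_1$ of order $1$, with Fourier symbol $|\xi|^{-1}$. By the Hardy--Littlewood--Sobolev inequality, $I_1$ maps $L^p(\R^d)$ boundedly into $L^q(\R^d)$ whenever $1<p<d$ and $\frac1q=\frac1p-\frac1d$, i.e. $q=\frac{dp}{d-p}$; this gives $\|\A_j[u]\|_{L^{dp/(d-p)}(\R^d)}\le c\,\|u\|_{L^p(\R^d)}$. (One may instead factor the symbol as $\frac{i\xi_j}{|\xi|^2}=\frac{\xi_j}{|\xi|}\cdot\frac{i}{|\xi|}$ to write $\A_j=c\,\RR_j\circ I_1$ and combine the HLS bound for $I_1$ with the $L^q$-boundedness of $\RR_j$ established above, noting $q\in(1,\infty)$.)

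\textbf{Main obstacle.} There is no real obstacle here: the only genuinely nontrivial inputs are the Calder\'on--Zygmund / Mikhlin theorem --- the step that upgrades the elementary $L^2$-bound for $\RR_j$ to the full range $1<p<\infty$ through the weak-$(1,1)$ estimate --- and the Hardy--Littlewood--Sobolev inequality for $I_1$; both are classical, and in the body of the paper we simply invoke them via Theorems 1.55 and 1.57 of \cite{N-book}. All the remaining steps (composition of operators, duality by Parseval plus density, and the kernel-domination argument for $\A_j$) are soft.
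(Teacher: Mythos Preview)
Your proposal is correct and in fact supplies strictly more detail than the paper itself, which does not prove this lemma at all but merely cites Theorems~1.55 and~1.57 in \cite{N-book}; the classical ingredients you invoke (Mikhlin/Calder\'on--Zygmund for $\RR_j$, Parseval plus density for the duality identity, and Hardy--Littlewood--Sobolev for $\A_j$) are precisely what underlies those cited theorems, so your approach is essentially the same.
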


The following commutator estimate is taken from Theorem 10.28 in \cite{F-N-book}, which is in the spirit of Coifman and Meyer \cite{CM}:
\begin{lemma}\label{lem-Riesz3}
Let $w\in W^{1,r}(\R^d)$ and $v\in L^p (\R^d)$ with
$
1<r<d,\ 1<p<\infty, \ \frac{1}{r}+\frac 1p-\frac 1d <1.
$ Then, for any $s>1$ satisfying
$
\frac{1}{r}+\frac 1p-\frac 1d < \frac 1s < \min\{1,\frac{1}{r}+\frac 1p\},
$
there exists a constant $c=c(r,p,s,d)$ such that:
$$
\| \RR_{ij}[w\,v] - w \RR_{ij}[v] \|_{W^{\b,s}(\R^d)}\leq c\, \|w\|_{W^{1,r}(\R^d)} \|v\|_{L^{p}(\R^d)},
$$
for any $i, j \in \{1, \dots, d\}$, where $\b\in (0,1)$ satisfies
$
\frac{\b}{d} =\frac 1d + \frac 1s - \frac{1}{r} - \frac 1p.
$

\end{lemma}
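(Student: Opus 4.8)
The plan is to prove this Coifman--Meyer type commutator estimate by a Littlewood--Paley/paraproduct decomposition, a route that makes the roles of the four exponents transparent. Put $\mathcal C(w,v):=\RR_{ij}[w\,v]-w\,\RR_{ij}[v]$. First I would note that when $i=j$ the local (identity) part of $\RR_{ij}$ contributes $c\,w(x)v(x)$ to \emph{both} $\RR_{ij}[wv]$ and $w\RR_{ij}[v]$, and so cancels in the commutator; hence $\mathcal C(w,v)$ may be treated as a commutator with a convolution singular integral whose kernel $K$ is smooth on $\R^d\setminus\{0\}$, homogeneous of degree $-d$, has vanishing spherical mean and satisfies $|\nabla^\ell K(z)|\le c_\ell|z|^{-d-\ell}$; equivalently, in Fourier variables $\mathcal C(w,v)$ is the bilinear multiplier with symbol $m(\xi+\zeta)-m(\zeta)$, $m(\zeta):=\zeta_i\zeta_j/|\zeta|^2$, which is bounded and, crucially, $O(|\xi|/|\zeta|)$ when $|\xi|\ll|\zeta|$ — the mechanism behind the gain of (almost) one derivative. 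The $L^s$-part of the target norm is then soft: by the $L^p\!\to\!L^p$ boundedness of $\RR_{ij}$ (Lemma~\ref{lem-Riesz1}), Hölder's inequality and the Sobolev embedding $W^{1,r}(\R^d)\hookrightarrow L^q(\R^d)$,
\[
\|\mathcal C(w,v)\|_{L^s}\le\|\RR_{ij}[wv]\|_{L^s}+\|w\,\RR_{ij}[v]\|_{L^s}\le c\,\|w\|_{L^q}\|v\|_{L^p}\le c\,\|w\|_{W^{1,r}}\|v\|_{L^p},\qquad \tfrac1q:=\tfrac1s-\tfrac1p,
\]
the hypotheses forcing $r<q<r^\ast:=\tfrac{dr}{d-r}$ (here $r<d$ gives $\tfrac1s>\tfrac1p$, so $q<\infty$); taking instead $q=r^\ast$ gives the ``endpoint'' bound $\|\mathcal C(w,v)\|_{L^{s_0}}\le c\,\|w\|_{W^{1,r}}\|v\|_{L^p}$ with $\tfrac1{s_0}:=\tfrac1r+\tfrac1p-\tfrac1d<1$. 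It then remains only to bound a homogeneous fractional seminorm of $\mathcal C(w,v)$ of smoothness slightly above $\beta$.

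For this I would fix a Littlewood--Paley partition $\mathrm{Id}=\sum_k\Delta_k$, set $S_k:=\sum_{l\le k}\Delta_l$, and split $\mathcal C(w,v)=\sum_k[\RR_{ij},S_{k-2}w]\Delta_k v+\sum_k[\RR_{ij},w-S_{k-2}w]\Delta_k v=:\mathrm I+\mathrm{II}$. The ``high--high'' term $\mathrm{II}$ is routine: each block $\Delta_l w$ with $l\ge k-1$ carries the derivative gain $\|\Delta_l w\|\sim 2^{-l}\|\Delta_l\nabla w\|$, which, together with the $L^\cdot$-boundedness of $\RR_{ij}$, Hölder, Sobolev embedding and the summability afforded by $\beta<1$, yields the required bound with room to spare. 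The decisive term is the low--high part $\mathrm I$: there the symbol of $\RR_{ij}$ is nearly constant on the frequency support $|\zeta|\sim 2^k$ of $\Delta_k v$ while $S_{k-2}w$ lives at $|\xi|\lesssim 2^{k-2}$, so a first-order Taylor expansion of $m$ gives
\[
[\RR_{ij},S_{k-2}w]\,\Delta_k v=\sum_{m=1}^d\big(\partial_m S_{k-2}w\big)\big(\mathcal K_{ij,m}\Delta_k v\big)+E_k,
\]
where $\mathcal K_{ij,m}$ is the Fourier multiplier with symbol $\partial_m m(\zeta)$ — homogeneous of degree $-1$, hence a one-derivative-smoothing Calderón--Zygmund operator — and $E_k$ comes from the quadratic Taylor remainder (symbol of order $-2$, paired with two derivatives of $w$), handled the same way but more easily. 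Since the blocks $(\partial_m S_{k-2}w)(\mathcal K_{ij,m}\Delta_k v)$ are frequency-localised at $|\zeta|\sim 2^k$, for any $\beta_1\in(\beta,1)$ Littlewood--Paley theory reduces the $\dot F^{\beta_1}_{s_1,2}$-bound to controlling $\big\|\big(\sum_k 2^{2\beta_1 k}|(\partial_m S_{k-2}w)(\mathcal K_{ij,m}\Delta_k v)|^2\big)^{1/2}\big\|_{L^{s_1}}$.

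The balancing that follows is the step I expect to be the crux. Pointwise, $|\mathcal K_{ij,m}\Delta_k v|\le c\,2^{-k}\mathcal M(\Delta_k v)$ ($\mathcal M$ the Hardy--Littlewood maximal operator) and, with the fractional maximal operator $\mathcal M_\alpha g(x):=\sup_{\rho>0}\rho^\alpha|B(x,\rho)|^{-1}\int_{B(x,\rho)}|g|$ at order $\alpha:=1-\beta_1$, $|\partial_m S_{k-2}w(x)|\le c\,2^{(1-\beta_1)k}\mathcal M_{1-\beta_1}(\nabla w)(x)$; hence $2^{\beta_1 k}|(\partial_m S_{k-2}w)(\mathcal K_{ij,m}\Delta_k v)|\le c\,\mathcal M_{1-\beta_1}(\nabla w)\,|\mathcal M(\Delta_k v)|$. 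By Hölder's inequality, the boundedness $\mathcal M_{1-\beta_1}\colon L^r(\R^d)\to L^{r_\ast}(\R^d)$ with $\tfrac1{r_\ast}=\tfrac1r-\tfrac{1-\beta_1}{d}$ (valid since $(1-\beta_1)r<r<d$), and the Fefferman--Stein vector-valued maximal inequality, the $L^{s_1}$-norm above is $\le c\,\|\mathcal M_{1-\beta_1}(\nabla w)\|_{L^{r_\ast}}\|(\sum_k|\Delta_k v|^2)^{1/2}\|_{L^p}\le c\,\|\nabla w\|_{L^r}\|v\|_{L^p}$, provided $\tfrac1{s_1}=\tfrac1{r_\ast}+\tfrac1p=\tfrac1r+\tfrac1p-\tfrac{1-\beta_1}{d}$; the interval $\beta_1\in(\beta,1)$ is non-empty and the side conditions ($r_\ast<\infty$, i.e. $\tfrac1s>\tfrac1p$; $s_1>1$) hold precisely because of the hypotheses $\tfrac1r+\tfrac1p-\tfrac1d<\tfrac1s<\min\{1,\tfrac1r+\tfrac1p\}$ and $r<d$. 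It then remains to interpolate the $\dot F^{\beta_1}_{s_1,2}$-bound with the ``endpoint'' $L^{s_0}$-bound — both spaces lying on the same Sobolev line $\sigma-\tfrac dq=1-\tfrac dr-\tfrac dp$ — which produces exactly $W^{\beta,s}(\R^d)$ with $\tfrac\beta d=\tfrac1d+\tfrac1s-\tfrac1r-\tfrac1p$; combining with the $L^s$-bound of the first paragraph completes the estimate.

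The step I expect to be the main obstacle is exactly this balancing in the low--high paraproduct: one must recognise that the single derivative liberated by the commutator should be re-absorbed into $\nabla w$ through a fractional maximal function of order $1-\beta$ (rather than onto a Riesz potential of $v$), since only this distribution keeps the $v$-factor at its natural $L^p$-regularity and makes the exponent identity $\tfrac\beta d=\tfrac1d+\tfrac1s-\tfrac1r-\tfrac1p$ close within the prescribed range of $s$; keeping track of the endpoints here is where the hypotheses $\tfrac1r+\tfrac1p-\tfrac1d<\tfrac1s<\min\{1,\tfrac1r+\tfrac1p\}$ are all consumed. The Taylor remainders $E_k$ and the high--high term $\mathrm{II}$ are, by comparison, routine bookkeeping of derivative gains. (Alternatively one may argue entirely in physical space, via the near/far decomposition of $\int(w(x)-w(y))K(x-y)v(y)\,dy$, the pointwise inequality $|w(x)-w(y)|\le c|x-y|(\mathcal I_1|\nabla w|(x)+\mathcal I_1|\nabla w|(y))$ with $\mathcal I_1$ the order-one Riesz potential, and Hardy--Littlewood--Sobolev; the paraproduct route, however, displays the role of the exponent conditions most cleanly.)
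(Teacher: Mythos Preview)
The paper does not prove this lemma at all: it is stated with the attribution ``taken from Theorem~10.28 in \cite{F-N-book}, which is in the spirit of Coifman and Meyer \cite{CM}'', and is then used as a black box in the effective-viscous-flux argument. So there is no paper proof to compare your attempt against.

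Your sketch is a reasonable route to the result and is in the Coifman--Meyer tradition the paper alludes to. The $L^s$ part is clean and the paraproduct scheme for the seminorm is the standard one; the exponent bookkeeping you do (in particular checking that $r<q<r^\ast$ for the Sobolev embedding, that the fractional maximal function $\mathcal M_{1-\beta_1}$ lands in $L^{r_\ast}$ with $r_\ast<\infty$, and that $(s_0,0)$, $(s,\beta)$, $(s_1,\beta_1)$ lie on the common Sobolev line $\tfrac{\sigma}{d}=\tfrac{1}{d}+\tfrac{1}{q}-\tfrac{1}{r}-\tfrac{1}{p}$) is correct and is exactly where the hypotheses $\tfrac{1}{r}+\tfrac{1}{p}-\tfrac{1}{d}<\tfrac{1}{s}<\min\{1,\tfrac{1}{r}+\tfrac{1}{p}\}$ and $r<d$ are consumed. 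A couple of minor points you should tighten if you write this out in full: (i) your parenthetical ``$r_\ast<\infty$, i.e.\ $\tfrac{1}{s}>\tfrac{1}{p}$'' is not the right equivalence --- $r_\ast<\infty$ follows automatically from $r<d$ and $\beta_1>0$, while $\tfrac{1}{s}>\tfrac{1}{p}$ is what gives $q<\infty$ in the $L^s$ estimate; (ii) the treatment of the Taylor remainder $E_k$ and of the high--high piece $\mathrm{II}$ is asserted rather than carried out, and while both are indeed routine, the high--high term does require the square-function/almost-orthogonality argument to sum in $k$. With those details filled in, your argument would constitute a genuine proof where the paper simply cites one.
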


In Lemma \ref{lem-Riesz3}, we used the fractional-order Sobolev space $W^{\b,s}(\R^d)$. Let $\O$ be the whole space $\R^d$ or a bounded Lipschitz domain in $\R^d$. For any $\b \in (0,1)$ and $s\in [1,\infty) $, we define
\be\label{def-frac-Sob}
 W^{\b,s}(\O):=\left\{u\in L^s(\O) : \| u \|_{W^{\b,s}(\O)}:= \| u \|_{L^{s}(\O)} + \left(\int_\O\int_\O  \frac{|u(x)-u(y)|^s}{|x-y|^{d+\b s}} \ \dx \ {\rm d}y\right)^{\frac{1}{s}} <\infty\right\}.
\ee
We recall the following classical compact embedding theorem (see Theorem 7.1 in \cite{NPV}).
\begin{lemma}\label{lem-frac-sob} Let $\O\subset \R^d$ be a bounded Lipschitz domain and suppose that $\b \in (0,1)$ and $s\in [1,\infty)$; then, the embedding
of $W^{\b,s}(\O)$ into $L^s(\O)$ is compact, i.e. $W^{\b,s}(\O)\hookrightarrow \hookrightarrow L^s(\O)$.

\end{lemma}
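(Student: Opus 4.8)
The plan is to reduce the problem to a compactness statement on all of $\R^d$ and then apply the Fr\'echet--Kolmogorov (Kolmogorov--Riesz) compactness criterion in $L^s$. Since $\O$ is a bounded Lipschitz domain, it is a $W^{\b,s}$-extension domain: there is a bounded linear operator $E\colon W^{\b,s}(\O)\to W^{\b,s}(\R^d)$ with $Eu|_\O=u$ (see \cite{NPV}). Fixing a cut-off $\chi\in\DC(\R^d)$ with $\chi\equiv 1$ on a neighbourhood of $\Ov{\O}$ and noting that multiplication by $\chi$ is a bounded operation on $W^{\b,s}(\R^d)$ (a routine estimate using the Lipschitz bound on $\chi$, valid since $\b<1$), we may replace $Eu$ by $\chi\,Eu$ and so assume that all extended functions are supported in one fixed ball $B(0,R)\supset\Ov{\O}$. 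It therefore suffices to prove: any sequence $\{v_n\}_{n\in\N}\subset W^{\b,s}(\R^d)$ that is bounded in $W^{\b,s}(\R^d)$ and supported in $B(0,R)$ is precompact in $L^s(\R^d)$; restricting to $\O$ and applying this to $v_n:=\chi\,Eu_n$ then yields the assertion.

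The key ingredient is the quantitative translation estimate
\be\label{transl-est}
\|v(\cdot+h)-v\|_{L^s(\R^d)}\leq c(d,s,\b)\,|h|^{\b}\,[v]_{\b,s},\qquad h\in\R^d,\quad v\in W^{\b,s}(\R^d),
\ee
where $[v]_{\b,s}:=\big(\int_{\R^d}\int_{\R^d}|v(x)-v(y)|^s|x-y|^{-d-\b s}\,\dx\,{\rm d}y\big)^{1/s}$ is the Gagliardo seminorm from \eqref{def-frac-Sob}. To prove \eqref{transl-est}, set $\phi(z):=\|v(\cdot+z)-v\|_{L^s(\R^d)}$; by translation invariance of the $L^s$-norm, $\phi$ is subadditive, $\phi(z_1+z_2)\leq\phi(z_1)+\phi(z_2)$, and even, $\phi(-z)=\phi(z)$, and a change of variables (Fubini) gives the identity $\int_{\R^d}\phi(z)^s\,|z|^{-d-\b s}\,{\rm d}z=[v]_{\b,s}^s$. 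Now fix $h\neq 0$, put $r:=|h|/2$, average the inequality $\phi(h)\leq\phi(z)+\phi(h-z)$ over $z\in B(h,r)$, and substitute $w=h-z$ in the second term:
\[
\phi(h)\leq\frac{1}{|B(h,r)|}\int_{B(h,r)}\phi(z)\,{\rm d}z+\frac{1}{|B(0,r)|}\int_{B(0,r)}\phi(w)\,{\rm d}w.
\]
On $B(h,r)$ one has $|z|\leq\tfrac32|h|$, on $B(0,r)$ one has $|w|\leq\tfrac12|h|$, and $|B(h,r)|=|B(0,r)|=c_d(|h|/2)^d$; hence, by Jensen's inequality (here $s\geq1$) and the Fubini identity,
\[
\frac{1}{|B(h,r)|}\int_{B(h,r)}\phi(z)\,{\rm d}z\leq\Big(\frac{(\tfrac32|h|)^{d+\b s}}{c_d(|h|/2)^d}\int_{B(h,r)}\frac{\phi(z)^s}{|z|^{d+\b s}}\,{\rm d}z\Big)^{1/s}\leq c(d,s,\b)\,|h|^{\b}\,[v]_{\b,s},
\]
and the second term is bounded in exactly the same way; this gives \eqref{transl-est}. (Equivalently, \eqref{transl-est} together with Minkowski's integral inequality yields $\|v-v*\varrho_\e\|_{L^s(\R^d)}\leq c\,\e^{\b}\,[v]_{\b,s}$ for a standard mollifier $\varrho_\e$, which could be used in place of \eqref{transl-est} below.)

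Finally, let $\{v_n\}\subset W^{\b,s}(\R^d)$ be bounded, $\|v_n\|_{W^{\b,s}(\R^d)}\leq M$, with $\supp v_n\subset B(0,R)$. Then $\{v_n\}$ is bounded in $L^s(\R^d)$; it is equi-tight at infinity since all $v_n$ vanish outside $B(0,R)$; and by \eqref{transl-est}, $\sup_n\|v_n(\cdot+h)-v_n\|_{L^s(\R^d)}\leq c\,M\,|h|^{\b}\to0$ as $h\to0$. By the Fr\'echet--Kolmogorov criterion, $\{v_n\}$ is precompact in $L^s(\R^d)$. Applying this with $v_n:=\chi\,Eu_n$ for an arbitrary bounded sequence $\{u_n\}$ in $W^{\b,s}(\O)$ and restricting to $\O$ shows that $\{u_n\}$ has a subsequence converging in $L^s(\O)$; hence $W^{\b,s}(\O)\hookrightarrow L^s(\O)$ compactly. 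Note that no relation between $\b s$ and $d$ is needed, so the argument covers all $\b\in(0,1)$, $s\in[1,\infty)$.

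The main technical point is the translation estimate \eqref{transl-est}: passing from the ``averaged'' information carried by the Gagliardo seminorm to a bound on a single translation that is uniform over directions. The subadditivity of $z\mapsto\|v(\cdot+z)-v\|_{L^s}$ together with the local averaging over $B(h,|h|/2)$ (and Jensen's inequality, which is where $s\geq1$ enters) is precisely what bridges this gap; everything else — the extension operator for Lipschitz domains, equi-tightness, and the Fr\'echet--Kolmogorov criterion — is standard.
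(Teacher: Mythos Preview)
Your proof is correct. The paper does not actually prove this lemma; it merely states it and refers to Theorem~7.1 in \cite{NPV} (Di~Nezza, Palatucci, Valdinoci). Your argument---extend from the Lipschitz domain to $\R^d$, localize by a cut-off, prove the quantitative translation estimate $\|v(\cdot+h)-v\|_{L^s}\leq c|h|^\b[v]_{\b,s}$, and conclude via Fr\'echet--Kolmogorov---is the standard route and is essentially the strategy followed in the cited reference, so there is nothing substantive to compare. The averaging-over-a-ball trick you use to pass from the Gagliardo seminorm to a single-translation bound is clean and correctly executed; the only places one might add a word are the boundedness of multiplication by $\chi$ on $W^{\b,s}(\R^d)$ (which, as you note, is routine for $\b<1$) and the availability of the extension operator for Lipschitz domains, both of which are indeed in \cite{NPV}.
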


\section{Uniform bounds}\label{sec:estimate}

Let $(\vr,\vu,\psi,\eta)$ be a dissipative (finite-energy) weak solution in the sense of Definition \ref{def-weaksl} with initial data $(\vr_0,\vu_0,\psi_0,\eta_0)$ satisfying \eqref{ini-data}. This section is devoted to establishing bounds on $(\vr,\vu,\psi,\eta)$ under the hypotheses \eqref{mu-eta1}--\eqref{mu-eta3}.

\subsection{Gronwall's inequality and uniform bounds}

We begin by noting that
\ba\label{ini-est2}
\int_{\Omega}  \int_D M \mathcal{F} \bigg(\frac{\psi_0}{M}\bigg)\,\dq \,\dx =\int_{\Omega}  \int_D  \bigg( \psi_0\log\bigg(\frac{\psi_0}{M}\bigg)-\psi_0 +M\bigg)\,\dq \,\dx.
\ea
As $ s\log s \geq s-1$ for all $s\geq 0$, it follows that $\psi_0\log\big(\frac{\psi_0}{M}\big)\geq \psi_0 - M$, which then implies that
$$
\left|\psi_0\log\bigg(\frac{\psi_0}{M}\bigg)-\psi_0 +M\right|\leq 2 \left|\psi_0\log\bigg(\frac{\psi_0}{M}\bigg)\right|.
$$
%
%
Thus, by \eqref{ini-data}, we have that
\be\label{ini-estf}
\int_\O \bigg[\frac{1}{2} \vr_{0} |\vu_{0} |^2 +  P(\vr_0) + \de\, \eta_0^2 + k \int_D M \mathcal{F} \bigg(\frac{\psi_0}{M}\bigg)\,\dq\bigg]\dx \leq c.
\ee

Since $\ff\in L^\infty((0,T)\times \O;\R^3)$ and
$$
|\vr \,\ff \cdot \vu |\leq |\ff| \, |\sqrt \vr|\,| \sqrt\vr \vu| \leq |\ff|\left( \vr + \vr|\vu|^2 \right) \leq |\ff|\left(1 + \vr^\g + \vr|\vu|^2 \right),
$$
by using Gronwall's inequality we deduce from \eqref{energy} and \eqref{ini-estf} that, for a.e. $t\in (0,T)$,
%
%
%
\ba\label{energy-f}
&\int_{\Omega} \bigg[ \frac{1}{2} \vr |\vu|^2 +  P(\vr) +\de\, \eta^2 + k \int_D M \mathcal{F} (\widetilde \psi)\, \dq\bigg] (t, \cdot) \,\dx\\
&\quad+ \int_0^t \intO{ \mu^S(\eta) \left|\frac{\nabla \vu + \nabla^{\rm T} \vu}{2} - \frac{1}{3} (\Div_x \vu) \II \right|^2 +\mu^B(\eta) |\Div_x \vu|^2  } \, \dt' + 2\, \e\, \de \int_0^t \intO{   |\nabla_x \eta|^2} \, \dt' \\
&\quad+\e\, k \int_0^t \intO{ \int_D M\left|\nabla_x \sqrt{\widetilde \psi}\, \right|^2\dq} \, \dt+\frac{k\,A_0}{4\lambda} \int_0^t \intO{ \int_D M \left|\nabla_{q}  \sqrt{\widetilde \psi}\,\right|^2\dq} \, \dt' \\
&\leq c(1+T)\, {\rm e}^{ct}.
\ea
In the rest of this section, we shall establish additional bounds on the unknowns, one by one, by using \eqref{energy-f}.

\subsection{Bounds on the fluid density and the polymer number density}

From \eqref{energy-f},  we have
\be\label{est-r-eta1}
\vr\in L^\infty(0,T;L^\gamma(\O)), \quad \eta\in L^\infty(0,T;L^2(\O))\cap L^2(0,T;W^{1,2}(\O)),  \quad \vr |\vu|^2 \in L^{\infty}(0,T;L^{1}(\O)).
\ee
By Sobolev embedding and interpolation it follows that
\be\label{est-r-eta3}
 \eta\in L^\infty(0,T;L^2(\O)) \cap L^2(0,T;L^6(\O)) \hookrightarrow L^{a}(0,T;L^{\frac{6a}{3a-4}}(\O)),\quad 2\leq a\leq \infty.
 \ee
The bounds in \eqref{est-r-eta1} then imply that
\be\label{est-ru2}
\vr \vu = \sqrt{\vr} \sqrt{\vr}\vu \in L^{\infty}(0,T;L^{\frac{2\g}{\g+1}}(\O;\R^3)).
\ee

\subsection{Bounds on the fluid velocity field}\label{sec:est-u}

By \eqref{energy-f} we deduce that
\be\label{est-u1}
g:= \sqrt{\mu^S(\eta)} \left|\frac{\nabla \vu + \nabla^{\rm T} \vu}{2} - \frac{1}{3} (\Div_x \vu) \II \right| \in L^2(0,T;L^2(\O)).
\ee
We recall that $\mu^S(\eta)$ fulfills the hypotheses stated in \eqref{mu-eta1} with $\o$ satisfying \eqref{mu-eta2} or \eqref{mu-eta3}. We begin by considering the case when $\o\geq 0$; \eqref{mu-eta1} then implies that $\mu^{-1}$ is uniformly bounded. Thus,
\be\label{est-u2}
 \left|\frac{\nabla \vu + \nabla^{\rm T} \vu}{2} - \frac{1}{3} (\Div_x \vu) \II \right| \in L^2(0,T;L^2(\O));
\ee
whence
Korn's inequality (see \cite{Dain}) with the no-slip boundary condition on $\vu$ implies that
\be\label{est-u4}
|\nabla_x \vu| \in L^2(0,T;L^2(\O)), \qquad \mbox{and hence}\qquad \vu \in L^2(0,T;W^{1,2}_0(\O;\R^3)).
\ee
On the other hand, when $\o\leq 0$ satisfies the constraint \eqref{mu-eta3}, that is $-4/3<\o\leq 0$, then, by \eqref{mu-eta1} and \eqref{est-r-eta3}, we have that
\be\label{est-mu1}
\mu(\eta)^{-1}\in L^\infty(0,T;L^{\frac{2}{|\o|}}(\O)) \cap L^{\frac{2}{|\o|}}(0,T;L^{\frac{6}{|\o|}}(\O)) \cap L^{\frac{10}{3|\o|}}((0,T)\times \O).
\ee
Thus, from \eqref{est-u1}, we deduce that
\be\label{est-u11}
 \left|\frac{\nabla \vu + \nabla^{\rm T} \vu}{2} - \frac{1}{3} (\Div_x \vu) \II \right| \in L^2(0,T;L^{\frac{4}{2+|\o|}}(\O)) \cap L^{\frac{4}{2+|\o|}}(0,T;L^{\frac{12}{6+|\o|}}(\O))\cap L^{\frac{20}{10+3|\o|}}((0,T)\times \O).
\ee
Hence, taking advantage of the no-slip boundary condition, we may use
Korn's inequality to obtain
\be\label{est-u13}
\vu \in L^2(0,T;W_0^{1,\frac{4}{2+|\o|}}(\O;\R^3)) \cap L^{\frac{4}{2+|\o|}}(0,T;W_0^{1,\frac{12}{6+|\o|}}(\O;\R^3))\cap L^{\frac{20}{10+3|\o|}}(0,T;W_0^{1,\frac{20}{10+3|\o|}}(\O;\R^3)).
\ee

\subsection{Bounds on the Newtonian stress tensor}

Consider first the case when \eqref{mu-eta2} holds with $\o\geq 0$. Let us write
$$
\mu^S(\eta) \left|\frac{\nabla \vu + \nabla^{\rm T} \vu}{2} - \frac{1}{3} (\Div_x \vu) \II \right| = g \sqrt{\mu^S(\eta)},
$$
where $g\in L^2((0,T)\times \O)$ is defined by \eqref{est-u1}.  Thanks to \eqref{mu-eta1}, \eqref{est-r-eta3}, \eqref{est-u4}, and by a similar argument as in the derivation of \eqref{est-u13}, we obtain
\be\label{est-SS1}
\SSS\in L^2(0,T;L^{\frac{4}{2+|\o|}}(\O;\R^{3\times 3})) \cap L^{\frac{4}{2+|\o|}}(0,T;L^{\frac{12}{6+|\o|}}(\O;\R^{3\times 3}))\cap L^{\frac{20}{10+3|\o|}}((0,T)\times \O;\R^{3\times 3}).
\ee
On the other hand, when $\o\leq 0$, by observing that $\sqrt{\mu^S} + \sqrt{\mu^B}\leq c<\infty$ we deduce that
\be\label{est-SS2}
\SSS\in L^2((0,T)\times\O;\R^{3\times 3}).
\ee

\subsection{Bounds on the probability density function}\label{sec:est-psi}

It follows from \eqref{energy-f} that
\be\label{est-psi1}
\mathcal{F}(\tpsi) \in L^\infty(0,T;L^1_M(\O\times D)), \quad  \sqrt{\tpsi} \in L^2(0,T;H_M^1(\O\times D)),
\ee
where we recall that $\tpsi := \psi/M$. Consequently, by Sobolev embedding and thanks to Lemma \ref{lem:HM1-emb} we then have that $\sqrt{\tpsi} \in L^2(0,T;L^6(\O;L^2_M(D)))$.

If $s>{\rm e}^2$, then $s\log s > 2s >2(s-1)$, which then implies that $\mathcal{F}(s)=s\log s -(s-1)>\frac{1}{2}{s\log s}$ for $s>{\rm e}^2$. Thus,
\ba\label{est-psi3}
\|\tpsi\log\tpsi\|_{L^1_M(\O\times D))}&=\int_{\O\times D} |\tpsi\log\tpsi|\,M\,\dq\,\dx = \int_{0 \leq \tpsi \leq {\rm e}^2} |\tpsi\log\tpsi|\,M\,\dq\,\dx +\int_{\tpsi > e^2} |\tpsi\log\tpsi|\,M\,\dq\,\dx \\
&  \leq  2 {\rm e}^2\int_{\O\times D}\,M\,\dq\,\dx + 2 \int_{\tpsi > {\rm e}^2} |\mathcal{F}(\tpsi)|\, M\,\dq\,\dx \leq c,
\ea
where we have used \eqref{pt1-Mi-Ui} and \eqref{est-psi1}. This implies that
\be\label{est-psi5}
\tpsi\log\tpsi \in L^\infty(0,T;L^1_M(\O\times D)),\quad \psi\log\psi \in L^\infty(0,T;L^1(\O\times D)).
\ee

Clearly,
\be\label{nabla-psi}
\nabla_{x}\tpsi = 2 \sqrt{\tpsi} \ \nabla_{x}\sqrt{\tpsi},\quad \nabla_{q}\tpsi = 2 \sqrt{\tpsi} \ \nabla_{q}\sqrt{\tpsi}.
\ee
By \eqref{nabla-psi} and H\"older's inequality we therefore have that
\ba\label{est-psi8}
\|\nabla_{q,x} \tpsi\|_{L^1_M(D;\R^{3(K+1)})} & = \int_{D} | M\,\nabla_{q,x}\tpsi | \, \dq = 2\int_{D} M \sqrt{\tpsi} \, \left|\nabla_{q,x}\sqrt{\tpsi}\,\right|\dq \\
&\leq 2 \left(\int_{D} M \left|\nabla_{q,x}\sqrt{\tpsi}\,\right|^2 \, \dq \right)^{\frac 12}\left(\int_{D} M \tpsi \, \dq \right)^{\frac 12}=2 \left\|\nabla_{q,x}\sqrt{\tpsi}\,\right\|_{L^2_M(D;\R^{3(K+1)})} \eta^{\frac 12}.
\ea
Thus, we benefit from the estimates in \eqref{est-r-eta3} for $\eta$ and deduce that
\ba\label{est-psi9}
&\nabla_{q,x} \tpsi \in L^2(0,T;L^{\frac 43}(\O;L^1_M(D;\R^{3(K+1)})))\cap  L^{\frac 43}(0,T;L^\frac{12}{7}(\O;L^1_M(D;\R^{3(K+1)}))).
\ea
By observing that $\|\nabla_x \tpsi\|_{L^1_M(D;\R^{3})}=\|\nabla_x\psi\|_{L^1(D;\R^{3})}$ we obtain
\ba\label{est-psi10}
\nabla_x \psi \in L^2(0,T;L^{\frac 43}(\O;L^1(D;\R^{3})))\cap  L^{\frac 43}(0,T;L^\frac{12}{7}(\O;L^1(D;\R^{3}))).
\ea

\subsection{Bounds on the extra-stress tensor}

We recall that the extra-stress tensor can be expressed as (see \eqref{weak-est})
$$\TT := k \left(\sum_{i=1}^K \CC_i(\psi)\right)-\left(k(K+1) \eta  + \de\,  \eta^2\right) \II. $$
Thanks to the bounds on $\eta$ stated in \eqref{est-r-eta1} and \eqref{est-r-eta3}, we have that
\be\label{est-TT1}
\eta^2\in L^\infty(0,T;L^1(\O)) \cap L^1(0,T;L^3(\O)).
\ee

According to \eqref{pt1-Mi-Ui}, we have $M=0$ on $\d D$. We then deduce by using \eqref{def-Ci} and \eqref{pt2-M} that
\ba\label{est-TT2}
\CC_i(\psi) &= \CC_i(M\tpsi) = \int_D M \tpsi\, U_i'\bigg(\frac{q_i^2}{2}\bigg)\, q_i q_i^{\rm T} \, \dq = - \int_{D} \tpsi\, (\nabla_{q_i} M)q_i^{\rm T}\, \dq\\
&=\int_{D} M\, (\nabla_{q_i}\tpsi) q_i^{\rm T}\, \dq + \left(\int_{D} M  \tpsi \, \dq\right)\II = \int_{D} M (\nabla_{q_i}\tpsi) q_i^{\rm T}\, \dq + \eta \II.
\ea
Thus, by \eqref{nabla-psi}, H\"older's inequality implies that
\ba\label{est-TT4}
\left|\int_{D} M (\nabla_{q_i}\tpsi) q_i^{\rm T}\, \dq \right|  \leq c \left(\int_{D} M \left(\nabla_{q_i}\sqrt{\tpsi}\right)^2 \, \dq \right)^{\frac 12}\left(\int_{D} M \tpsi \, \dq \right)^{\frac 12}= c \left(\int_{D} M \left(\nabla_{q_i}\sqrt{\tpsi}\right)^2 \, \dq \right)^{\frac 12} \eta ^{\frac 12}.\nn
\ea
By \eqref{est-psi1} we have
$$
\left(\int_{D} M \left(\nabla_{q_i}\sqrt{\tpsi}\right)^2 \dq \right)^{\frac 12} \in L^2(0,T;L^2(\O)).
$$
Thus, by \eqref{est-r-eta3} and \eqref{est-TT1}, we obtain
\be\label{est-TTf}
\TT \in L^2(0,T;L^{\frac 43}(\O;\R^{3\times 3}))\cap  L^{\frac 43}(0,T;L^\frac{12}{7}(\O;\R^{3\times 3})).
\ee

\subsection{Higher integrability of the fluid density and the pressure}\label{sec:est-higher-vr}

From the energy inequality \eqref{energy} we deduce that
$\vr\in L^\infty(0,T;L^\g(\O))$, which implies that $p(\vr)\in L^\infty(0,T;L^1(\O))$; unfortunately, $p(\vr)$ is only in $L^1$ with respect to the spatial variable, $x$. In order to improve the
integrability of the pressure with respect to the spatial variable, one may use the so-called Bogovski{\u{\i}} operator (see \cite{bog}), exactly as in \cite{FNP, F-book, N-book}.
We recall the following lemma whose proof can be found in Chapter III of Galdi's book \cite{Galdi-book}.
\begin{lemma}\label{lem-bog}
 Let $p \in (1,\infty)$ and suppose that $\O\subset \R^d$ is a bounded Lipschitz domain. Let $L^{p}_0(\Omega)$ be the space of $L^p(\Omega)$ functions with zero mean-value.
Then, there exists a linear operator $\mathcal{B}_\O$ from $L_0^p(\O)$ to $W_0^{1,p}(\O;\R^d)$ such that
$$
\Div_x\, \mathcal{B}_\O (f) =f \quad \mbox{in} \ \O; \quad \|\mathcal{B}_\O (f)\|_{W_0^{1,p}(\O;\R^d)} \leq c (d,p,\O)\, \|f\|_{L^p(\O)}\quad\, \forall\,  f\in L_0^p(\O),
$$
where the constant $c$ depends only on $p$, $d$ and the Lipschitz character of $\O$. If, in addition, $f=\Div_x \vg$ for some $\vg \in L^{q}, \  1<q<\infty, \ \vg \cdot {\bf n} = 0 $ on $\d\O$, then
$$
\|\mathcal{B}_\O (f)\|_{L^{q}(\O;\R^d)} \leq c (d, q,\O)\, \|\vg \|_{L^q(\O;\R^d)}.
$$

\end{lemma}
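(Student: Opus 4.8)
The plan is to follow the classical construction of Bogovski\u\i: reduce, by a localisation argument, to the case of a domain star-shaped with respect to a ball, on which an explicit integral formula for $\mathcal{B}_\O$ is available, and then estimate that formula by Calder\'on--Zygmund theory. For the reduction, since $\O$ is a bounded Lipschitz domain I would first cover it by finitely many open sets $\O_1,\dots,\O_m$ such that each $\O\cap\O_i$ is star-shaped with respect to an open ball $B_i\Subset\O\cap\O_i$, with $m$ and the relevant geometric data depending only on $d$ and the Lipschitz character of $\O$. Given $f\in L^p_0(\O)$, the task is then to write $f=\sum_{i=1}^m f_i$ with $\supp f_i\subset\O\cap\O_i$, $\int_\O f_i\,\dx=0$ for each $i$, and $\sum_i\|f_i\|_{L^p(\O)}\le c\,\|f\|_{L^p(\O)}$. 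I would do this by fixing a partition of unity $\{\theta_i\}$ subordinate to $\{\O_i\}$, setting $h_i:=\theta_i f$, and correcting the (generally nonzero) means $\int_\O h_i\,\dx$: using the connectedness of $\O$ and the fact that $\sum_i\int_\O h_i\,\dx=\int_\O f\,\dx=0$, one routes the total mass through a chain of overlapping patches, adding and subtracting fixed $L^\infty$ bump functions supported in the overlaps $\O_i\cap\O_j$, each weighted by a scalar bounded by $c\,\|f\|_{L^p(\O)}$. Granting this splitting, it suffices to construct and estimate $\mathcal{B}$ on each star-shaped patch and then sum.

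On a domain star-shaped with respect to a ball $B$, I would fix $\chi\in\DC(B)$ with $\int_B\chi\,\dx=1$ and set
$$
\mathcal{B}(f)(x):=\int_\O f(y)\,(x-y)\int_1^\infty\chi\big(y+s(x-y)\big)\,s^{d-1}\,{\rm d}s\,{\rm d}y .
$$
The inner integral is singular at $y=x$ only through the prefactor $x-y$, so the kernel is $O(|x-y|^{1-d})$ and $\mathcal{B}(f)$ is well defined for $f\in L^p$. Integrating by parts in the variable $s$ and using $\int_\O f\,\dx=0$ gives $\dive_x\mathcal{B}(f)=f$ in $\O$, while the compact support of $\chi$ in $B\Subset\O$ yields $\mathcal{B}(f)\in W^{1,p}_0(\O;\R^d)$. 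Differentiating in $x$, one finds $\nabla_x\mathcal{B}(f)=T[f]+S[f]$, where $T$ has a kernel homogeneous of degree $-d$ in $x-y$ with vanishing spherical mean -- a Calder\'on--Zygmund kernel, thanks to the normalisation $\int_B\chi\,\dx=1$ -- and $S$ has kernel of size $O(|x-y|^{1-d})$. Then $\|T[f]\|_{L^p(\O)}\le c\,\|f\|_{L^p(\O)}$ by the Calder\'on--Zygmund theorem (valid for $1<p<\infty$), and $\|S[f]\|_{L^p(\O)}\le c\,\|f\|_{L^p(\O)}$ by Young's convolution inequality, since $|z|^{1-d}\mathbf{1}_{\{|z|\le\operatorname{diam}\O\}}\in L^1(\R^d)$; together with the Poincar\'e inequality this gives the claimed $W^{1,p}_0$-bound, with constants depending only on $d$, $p$, $B$ and $\operatorname{diam}\O$, hence only on $d$, $p$ and the Lipschitz character of $\O$.

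For the refined estimate, suppose $f=\dive\vg$ with $\vg\in L^q(\O;\R^d)$ and $\vg\cdot{\bf n}=0$ on $\d\O$ (so $\int_\O f\,\dx=0$ automatically). I would substitute $f=\dive\vg$ into the formula above and integrate by parts in $y$; the boundary term vanishes because $\vg\cdot{\bf n}=0$, and one is left with $\mathcal{B}(\dive\vg)(x)=\int_\O G(x,y)\,\vg(y)\,{\rm d}y$, where $G$ is now homogeneous of degree $-d$ in $x-y$ (plus an $O(|x-y|^{1-d})$ remainder). Applying the Calder\'on--Zygmund theorem and Young's inequality now \emph{directly to $\vg$} rather than to $f$ gives $\|\mathcal{B}(\dive\vg)\|_{L^q(\O;\R^d)}\le c\,\|\vg\|_{L^q(\O;\R^d)}$; the localisation of the first paragraph, arranged so that each piece $\vg_i$ still satisfies $\vg_i\cdot{\bf n}=0$, transfers this to a general Lipschitz $\O$.

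The hard part, I expect, will be the localisation step -- splitting a zero-mean function into zero-mean pieces supported in the star-shaped patches while keeping $L^p$-control with constants governed only by the Lipschitz character -- together with the verification that the kernel obtained by differentiating the explicit formula genuinely satisfies the size and cancellation conditions of a Calder\'on--Zygmund kernel, so that its $L^p$-boundedness for every $1<p<\infty$ is legitimate and uniform in $f$. The divergence identity and the Young/Poincar\'e estimates are, by comparison, routine bookkeeping.
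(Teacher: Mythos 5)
The paper does not actually prove this lemma; it cites Chapter III of Galdi's book \cite{Galdi-book}, where the classical Bogovski\u\i\ construction is carried out in full. Your sketch reproduces precisely that construction -- localisation to star-shaped patches via a mean-preserving splitting, the explicit Bogovski\u\i\ kernel on each patch, Calder\'on--Zygmund and Young estimates for the gradient -- so you are following the route of the cited reference rather than offering an alternative.

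One caveat is worth recording because it affects the step you yourself flag as the hard part. Your treatment of the negative-norm estimate glosses over a real difficulty in the localisation: if you split $f=\dive\vg$ with a partition of unity $\{\theta_i\}$, the pieces $\theta_i f$ are \emph{not} of divergence form $\dive(\theta_i\vg)$; rather $\dive(\theta_i\vg)=\theta_i f+\nabla\theta_i\cdot\vg$, and the commutator $\nabla\theta_i\cdot\vg$ is genuinely present and only lies in $L^q$, not in divergence form. So the phrase ``arranged so that each piece $\vg_i$ still satisfies $\vg_i\cdot\mathbf{n}=0$'' does not by itself reconcile the two decompositions; one must show separately that $\mathcal{B}$ applied to these lower-order remainders yields an $L^q$ contribution controlled by $\|\vg\|_{L^q}$. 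Galdi's proof handles this with an explicit auxiliary decomposition. This is the genuine content hidden in the step you call routine bookkeeping, and any complete write-up must supply it; otherwise the outline is sound and matches the reference the paper relies on.
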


The key idea in establishing higher integrability of $\vr$ and $p(\vr)$ is to choose the following test function in \eqref{weak-form3}:
\be\label{bog-test}
\varphi (t,x) : =  \phi(t)\, \mathcal{B}_{\O} \bigg(S_\e\big[ b_{n}(\vr) \big] - \frac{1}{|\O|}\int_\O S_\e\big[ b_{n}(\vr)\big]\, \dx \bigg),
\ee
where $\phi\in C^\infty_c((0,T))$ is a nonnegative test function, $S_\e$ is the classical (Friedrichs) mollifier with respect to the spatial variable, and $\{b_n(\vr)\}_{n\in \N}$ is an increasing sequence of $C^1$ functions satisfying \eqref{cond-b-renormal}, which approximates the function $\vr^\th$. As in Lemma 2.1 in \cite{Feireisl-2001} we have, for any $b$ satisfying \eqref{cond-b-renormal},
\be\label{weak-renormal1}
\d_t b(\vr) +\Div_x (b(\vr)\vu) + (b'(\vr)\vr-b(\vr))\,\Div_x \vu =0 \ \mbox{in} \ \mathcal{D}' ((0,T)\times \R^3),
\ee
where the functions $\vr$ and $\vu$ are extended by zero outside $\O$. For any $n$, $b_n(\vr)$ satisfies \eqref{weak-renormal1}. After tedious but rather straightforward calculations (see \cite{Feireisl-2001,FNP} for details), one obtains, for $\th>0$ sufficiently small, that
\be\label{est-bog-p1}
\int_0^t\int_\O \phi(t)\, \vr^\g\, b_{n}(\vr)\, \dx \, \dt \leq c\qquad \forall\,n \geq 0.
\ee
Letting $n\to \infty$ in \eqref{est-bog-p1} and approximating $1$ by $C_c^\infty ((0,T))$ functions finally gives
\be\label{est-bog-pf1}
 \vr \in L^{\g+\th}((0,T)\times \O),\quad p(\vr) \in L^{1+\frac{\th}{\g}}((0,T)\times \O) \quad \mbox{for some $\th>0$}.
\ee

\medskip

\begin{remark}\label{rem-bog-th}

In order to obtain \eqref{est-bog-pf1} for some positive $\th$, the conditions imposed in \eqref{mu-eta2} and \eqref{mu-eta3} can be relaxed. By careful analysis the following constraints are found to be sufficient:
\be\label{mu-eta20}
\g>\frac 32,\ 0\leq \o < \frac{10}{3}  \quad \mbox{or} \quad -2<\o\leq 0,\  \g>\frac{6}{4+3\o}.
\ee
The more restrictive conditions featuring in \eqref{mu-eta2} and \eqref{mu-eta3} are needed later on, in Section \ref{vis-flux}, in order to prove the so-called {\rm effective viscous flux} equality (see Remark \ref{rem-mu-eta}).
\end{remark}

\subsection{Bounds on the time derivative and continuity}\label{sec:est-time}

This section is devoted to establishing bounds on the time derivatives $(\d_t \vr, \ \d_t\eta, \  \d_t (\vr \vu), \ \d_t \psi)$.

As $\d_t \vr = - \Div_x(\vr\vu)$, the bound in \eqref{est-ru2} implies that
$\d_t \vr \in L^\infty(0,T; W^{-1,\frac{2\g}{\g+1}}(\O)).$ Then, by \eqref{est-r-eta1} and Lemma \ref{lem-Cw-Cw*}, we have that
$$
\vr \in C_w([0,T]; L^\g(\O)).
$$

\medskip

Recall that $\d_t \eta = - \Div_x(\eta\vu) + \e \Delta_x \eta$.  For $\o\geq 0$ satisfying \eqref{mu-eta2}, we have $\vu \in L^2(0,T; W_0^{1,2}(\O;\R^3))$, which is embedded in $L^2(0,T; L^6(\O;\R^3))$. Thanks to \eqref{est-r-eta1}, we then obtain $\d_t \eta \in L^2(0,T; W^{-1,\frac{3}{2}}(\O)).$
If on the other hand $\o\leq 0$ satisfies \eqref{mu-eta3}, then we have $\vu \in L^2(0,T;W_0^{1,\frac{4}{2+|\o|}}(\O;\R^3)) \hookrightarrow L^2(0,T; L^\frac{12}{3|\o|+2}(\O;\R^3))$. Consequently, $\d_t \eta \in L^2(0,T; W^{-1,\frac{12}{3|\o|+8}}(\O))$.
Thus, in both cases, by \eqref{est-r-eta1} and Lemma \ref{lem-Cw-Cw*}, we have
$$
\eta \in C_w([0,T]; L^2(\O)).
$$

\medskip

Next, we recall that $\d_t(\vr\vu)=- \Div_x (\vr \vu \otimes \vu) - \nabla_x p(\vr) +
\Div_x \SSS +\Div_x \TT +\vr\, \ff$. We first consider the case with $\o\geq 0$ satisfying \eqref{mu-eta2}. By the estimates in Section \ref{sec:est-u} we have $\vu \in L^2(0,T; L^6(\O;\R^3))$. Since $\vr\in L^\infty(0,T;L^\g(\O))$  with $\g>\frac{3}{2}$, together with \eqref{est-r-eta1}, we obtain
\be\label{est-ruu}
\vr \vu \otimes \vu \in L^1(0,T; L^{\frac{3\g}{\g+3}}(\O;\R^{3\times 3}))\cap L^\infty(0,T; L^{1}(\O;\R^{3\times 3})) \hookrightarrow L^r(0,T; L^{r}(\O;\R^{3\times 3})) \quad \mbox{for some $r>1$}.
\ee
By \eqref{est-SS1}, \eqref{est-bog-pf1}, \eqref{est-TTf}, together with \eqref{est-ruu}, we deduce that
\be\label{est-dt-ru}
\d_t (\vr\vu) \in  L^r(0,T; W^{-1,r}(\O;\R^{3})) \quad \mbox{for some $r>1$}.
\ee
For the case with $\o\leq 0$ satisfying \eqref{mu-eta3}, a similar argument gives the same result as in \eqref{est-dt-ru}. Then, by \eqref{est-ru2}, \eqref{est-dt-ru} and Lemma \ref{lem-Cw-Cw*}, we have in both cases that
$$\vr\vu \in C_w([0,T]; L^\frac{2\g}{\g+1}(\O;\R^3)).$$

By the Fokker--Planck equation \eqref{eq-psi} for $\psi$, the estimates in \eqref{est-psi9} and \eqref{est-psi10}, direct calculations yield that
\be\label{est-dt-psi}
\d_t \psi \in
L^2(0,T; W^{s,2}(\O\times D)') \quad \mbox{with $s>1 + \frac{3}{2}(K+1)$},
\ee
where we have used that $W^{s,2}(\Omega \times D) \hookrightarrow W^{1,\infty}(\Omega \times D)$ for $s>1+\frac{3}{2}(K+1)$.
Moreover, by the estimates \eqref{est-psi5} and \eqref{est-dt-psi}, using Lemma \ref{lem-Cw-Cw*} and the same argument as in Section 4.5 in \cite{Barrett-Suli}, we deduce that  $$\psi \in C_w([0,T];L^1(\O\times D)).$$

\subsection{Summary}

We summarize the results obtained in this section. Let $(\vr,\vu,\psi,\eta)$ be a dissipative weak solution in the sense of Definition \ref{def-weaksl} associated
with initial data $(\vr_0,\vu_0,\psi_0,\eta_0)$ satisfying \eqref{ini-data}. Then,
\ba\label{est-sum1}
& \vr\in C_w([0,T);L^\gamma(\O))\cap L^{\g+\th}((0,T)\times \O);\quad  \eta \in C_w([0,T];L^2(\O)) \cap L^2(0,T;W^{1,2}(\O));\\
& \vr |\vu|^2 \in L^{\infty}(0,T;L^{1}(\O)),\quad \vr \vu \in C_w([0,T];L^{\frac{2\g}{\g+1}}(\O;\R^3));\quad  \TT \in L^2(0,T;L^{\frac 43}(\O;\R^{3\times 3}))\cap  L^{\frac 43}(0,T;L^\frac{12}{7}(\O;\R^{3\times 3}));\\
&\vu \in L^2(0,T;W^{1,2}_0(\O;\R^3)) \ \mbox{if $\o\geq 0$ satisfies \eqref{mu-eta2}};\\
& \vu \in L^2(0,T;W_0^{1,\frac{4}{2+|\o|}}(\O;\R^{3})) \cap L^{\frac{4}{2+|\o|}}(0,T;W_0^{1,\frac{12}{6+|\o|}}(\O;\R^{3}))\cap L^{\frac{20}{10+3|\o|}}(0,T;W_0^{1,\frac{20}{10+3|\o|}}(\O;\R^{3})) \\
&\qquad \mbox{if $\o\leq 0$ satisfies \eqref{mu-eta3}};\\
&\mathcal{F}(\tpsi),  \tpsi\log\tpsi, \ \tpsi \in L^\infty(0,T;L^1_M(\O\times D)),\quad \sqrt{\tpsi} \in L^2(0,T;H_M^1(\O\times D)),\\
&\qquad \nabla_{q,x} \tpsi \in L^2(0,T;L^{\frac 43}(\O;L^1_M(D;\R^{3(K+1)})))\cap  L^{\frac 43}(0,T;L^\frac{12}{7}(\O;L^1_M(D;\R^{3(K+1)})));\\
&\mathcal{F}(\psi),\  \psi\log\psi \  \in L^\infty(0,T;L^1(\O\times D)),\quad  \psi \in C_w([0,T];L^1(\O\times D)),\\
 &\qquad \nabla_x \psi \in L^2(0,T;L^{\frac 43}(\O;L^1(D;\R^{3})))\cap  L^{\frac 43}(0,T;L^\frac{12}{7}(\O;L^1(D;\R^{3})));
\ea
and

\begin{equation}\label{est-sum2}
\begin{aligned}
&\d_t \vr \in L^\infty(0,T; W^{-1,\frac{2\g}{\g+1}}(\O)), \quad \d_t (\vr\vu) \in  L^r(0,T; W^{-1,r}(\O;\R^{3})) &&\quad \mbox{for some $r>1$};\\
& \d_t \eta \in L^2(0,T; W^{-1,\frac{3}{2}}(\O)) &&\quad \mbox{if $\o\geq 0$ satisfies \eqref{mu-eta2}};\\
&\d_t \eta \in L^2(0,T; W^{-1,\frac{12}{3|\o|+8}}(\O)) &&\quad \mbox{if $\o\leq 0$ satisfies \eqref{mu-eta3}};\\
&\d_t \psi \in  L^2(0,T; W^{s,2}(\O\times D)') &&\quad \mbox{with $s>1 + \frac{3}{2}(K+1)$}.
\end{aligned}
\end{equation}
It is important to note that all of the above inclusions are consequences of bounds that depend only on the initial energy, the final time $T$, and the structural constants in the hypotheses imposed on the constitutive relations.

\section{Passing to the limit}\label{sec:lim}

Let $(\vr_n,\vu_n,\psi_n,\eta_n)_{n\in \N}$ be a sequence of the dissipative (finite-energy) weak solutions satisfying the assumptions in Theorem \ref{theorem}. Then, the energy inequality \eqref{energy} and \eqref{energy-f} give, respectively,
\ba\label{energy-n}
&\int_{\Omega} \bigg[
\frac{1}{2} \vr_n |\vu_n|^2 +  P(\vr_n) +\de\, \eta_n^2 + k \int_D M \mathcal{F} (\psi_n)\, \dq\bigg] (t, \cdot) \, \dx + 2\, \e\, \de \int_0^t \intO{   |\nabla_x \eta_n|^2} \, \dt'\\
&\quad+ \int_0^t \intO{ \mu^S(\eta_n) \left|\frac{\nabla \vu_n + \nabla^{\rm T} \vu_n}{2} - \frac{1}{3} (\Div_x \vu_n) \II \right|^2 +\mu^B(\eta_n) |\Div_x \vu_n|^2  } \, \dt' \\
&\quad+ \e\, k \int_0^t \intO{ \int_D M\left|\nabla_x \sqrt{\tpsi_n} \right|^2 \dq} \, \dt+\frac{k\,A_0}{4\lambda} \int_0^t \intO{ \int_D M \left|\nabla_{q} \sqrt{\tpsi_n}\right|^2 \dq} \, \dt'\\
&\leq \int_{\Omega} \bigg[
\frac{1}{2} \vr_{0,n} |\vu_{0,n} |^2 +  P(\vr_{0,n}) + \de\, \eta_{0,n}^2 + k \int_D M \mathcal{F} \bigg(\frac{\psi_{0,n}}{M}\bigg)\,\dq\bigg]\dx + \int_0^t\int_\O \vr_n \,\ff \cdot \vu_n \, \dx\, \dt'
\ea
 and, therefore,
 \ba\label{energy-f-n}
&\int_{\Omega} \bigg[ \frac{1}{2} \vr_n |\vu_n|^2 +  P(\vr_n) +\de\, \eta_n^2 + k \int_D M \mathcal{F} (\widetilde \psi_n)\,\dq\bigg] (t, \cdot) \, \dx + 2\, \e\, \de \int_0^t \intO{   |\nabla_x \eta_n|^2} \, \dt' \\
&\quad+ \int_0^t \intO{ \mu^S(\eta_n) \left|\frac{\nabla \vu_n + \nabla^{\rm T} \vu_n}{2} - \frac{1}{3} (\Div_x \vu_n) \II \right|^2 +\mu^B(\eta_n) |\Div_x \vu_n|^2  } \ \dt' \\
&\quad+\e\, k \int_0^t \intO{ \int_D M\left|\nabla_x \sqrt{\widetilde \psi_n} \right|^2\dq} \ \dt'+\frac{k\,A_0}{4\lambda} \int_0^t \intO{ \int_D M \left|\nabla_{q}  \sqrt{\widetilde \psi_n}\right|^2\dq} \, \dt' \\
&\leq c(1+T)\, {\rm e}^t.
\ea
Thus, from the results in Section \ref{sec:estimate}, this solution sequence $(\vr_n,\vu_n,\psi_n,\eta_n)_{n\in \N}$ satisfies the uniform bounds  \eqref{est-sum1} and \eqref{est-sum2}.

The present section is devoted to studying the limit of this solution sequence. We remark that, throughout this section, the limits are taken up to subtractions of subsequences without identification.

\subsection{Convergence of the fluid density}\label{sec:vr-lim}

We have that $\{\vr_n\}_{n\in \N}$ is a sequence in $C_w([0,T];L^\gamma(\O))$ satisfying
\be\label{vrn-est}
\sup_{n\in \N} \left(\|\vr_n\|_{L^\infty(0,T;L^\gamma(\O))} +  \|\d_t \vr_n \|_{L^\infty(0,T; W^{-1,\frac{2\g}{\g+1}}(\O))}  \right) \leq c.
\ee
By Sobolev embedding one has
$$
L^\g(\O) \hookrightarrow W^{-1,\frac{3\g}{3-\g}}(\Omega) \quad \mbox{ if $\g<3$};\quad L^\g(\O) \hookrightarrow W^{-1,s}(\Omega) \quad \mbox{ for any $s \in (1,\infty)$, \ if $\g\geq3$}.
$$
Thus, by applying Lemma \ref{lem-Cw}, we deduce that
\ba\label{vrn-lim}
& \vr_n \to \vr \quad \mbox{in}\  C_w([0,T];L^\gamma(\O));\\
& \vr_n \to \vr \quad \mbox{strongly in} \  C([0,T];W^{-1,r}(\O)) \quad \mbox{for any $r\geq \frac{3}{2}$ such that $\frac{3r}{3+r}<\g$}.
\ea

\subsection{Convergence of the fluid velocity field}\label{sec:lim-u}

By Sobolev embedding, for the case $\o\geq 0$, we have
\be\label{vun-limit1}
\vu_n \to \vu \quad \mbox{weakly in}\  L^2(0,T;W^{1,2}_0(\O;\R^3))\  \mbox{and in}\  L^2(0,T;L^{6}(\O;\R^3)),
\ee
while for the case $\o\leq 0$, we have
\ba\label{vun-limit2}
&\vu_n \to \vu \quad \mbox{weakly in}\  L^2(0,T;W_0^{1,\frac{4}{2+|\o|}}(\O;\R^3))\  \mbox{and weakly in}\  L^2(0,T;L^{\frac{12}{2+3|\o|}}(\O;\R^3)),\\
&\vu_n \to \vu \quad \mbox{weakly in}\  L^{\frac{4}{2+|\o|}}(0,T;W_0^{1,\frac{12}{6+|\o|}}(\O;\R^3))\  \mbox{and weakly in}\  L^{\frac{4}{2+|\o|}}(0,T;L^{\frac{12}{2+|\o|}}(\O;\R^3)),\\
&\vu_n \to \vu \quad \mbox{weakly in}\  L^{\frac{20}{10+3|\o|}}(0,T;W_0^{1,\frac{20}{10+3|\o|}}(\O;\R^3)) \  \mbox{and weakly in}\  L^{\frac{20}{10+3|\o|}}(0,T;L^{\frac{60}{10+9|\o|}}(\O;\R^3)).
\ea

\subsection{Convergence of the nonlinear terms $\vr_n\vu_n$ and $\vr_n \vu_n\otimes \vu_n$}\label{sec:lim-convective}

We first consider the case with $\o\geq 0$ satisfying \eqref{mu-eta2}. Since $\g>\frac 32>\frac 65$, we have, because of \eqref{vrn-lim}, that
\ba\label{vrn-lim1}
 \vr_n \to \vr \quad \mbox{strongly in} \  C([0,T];W^{-1,2}(\O)).
\ea
Together with \eqref{vun-limit1}, we have
\be\label{vrvun-lim0}
\vr_n\vu_n \to \vr\vu \quad \mbox{in $\mathcal{D}'((0,T)\times \O;\R^{3})$.}
\ee
By observing the uniform estimate
$$
\sup_{n \in \mathbb{N}}\left(\|\vr_n\vu_n\|_{L^\infty(0,T;L^{\frac{2\g}{1+\g}}(\O;\R^{3}))} + \|\vr_n\vu_n\|_{L^2(0,T;L^{\frac{6\g}{6+\g}}(\O;\R^{3}))}\right)\leq c
$$
we have
\be\label{vrvun-lim1}
\vr_n\vu_n \to \vr\vu \quad \mbox{weakly* in} \ L^\infty(0,T;L^{\frac{2\g}{1+\g}}(\O;\R^{3})) \ \mbox{and weakly in} \ L^2(0,T;L^{\frac{6\g}{6+\g}}(\O;\R^{3})).
\ee

Moreover, by \eqref{est-sum1} and \eqref{est-sum2}, we have $\vr_n\vu_n \in C_w([0,T];L^{\frac{2\g}{\g+1}}(\O;\R^d))$ and $\{\d_t (\vr_n\vu_n)\}_{n\in \N}$ is uniformly bounded in $L^r(0,T;W^{-1,r}(\O;\R^3))$ for some $r>1$. By Lemma \ref{lem-Cw} and the interpolation argument in Section \ref{sec:vr-lim} we have
\ba\label{vrvun-lim2}
&\vr_n\vu_n \to \vr\vu \quad \mbox{in}\  C_w([0,T];L^{\frac{2\g}{1+\g}}(\O;\R^3));\\
&\vr_n\vu_n \to \vr\vu \quad \mbox{strongly in} \  C([0,T];W^{-1,r}(\O;\R^3)) \quad \mbox{for any $r\geq \frac{3}{2}$ such that $\frac{3r}{3+r}<\frac{2\g}{1+\g}$}.
\ea

The fact that $\g>\frac 32$ implies $\frac{2\g}{1+\g} > \frac{6}{5}$. This gives $\vr_n\vu_n \to \vr\vu$ strongly in $C([0,T];W^{-1,2}(\O;\R^3))$. Together with \eqref{vun-limit1}, we have for the sequence of convective terms:
\be\label{vrvuvun-lim0}
\vr_n\vu_n\otimes \vu_n \to \vr\vu\otimes \vu \quad \mbox{ in $\mathcal{D}'((0,T)\times \O;\R^{3\times 3})$.}
\ee

The case $\o\leq 0$ is dealt with similarly to the case $\o\geq 0$, and we obtain the results stated in \eqref{vrvun-lim0} and \eqref{vrvuvun-lim0}, so we omit the details. We merely remark that when  $\o\leq 0$, then we have weaker integrability for $\vu$; this is, however, compensated by supposing stronger integrability for $\vr$ through hypothesis \eqref{mu-eta3}.

 We also remark that the hypotheses stated in \eqref{mu-eta2} and \eqref{mu-eta3} can be relaxed in this part of the analysis: the constraints stated in \eqref{mu-eta20} in Remark \ref{rem-bog-th} are sufficient.

\subsection{Convergence of the polymer number density}\label{sec:lim-eta}

We begin by noting that the sequence $\{\eta_n\}_{n\in \N}$ is contained in $C_w([0,T];L^2(\O))$ and satisfies the bound
\be\label{etan-est1}
\sup_{n \in \mathbb{N}} \left(\|\eta_n\|_{L^\infty(0,T;L^2(\O)) \cap L^2(0,T;W^{1,2}(\O))} + \|\d_t \eta_n\|_{L^2(0,T;W^{-1,\frac{12}{3|\o|+8}}(\O))}\right) \leq c.
\ee
Thus, by Lemma \ref{lem-Cw},
\be\label{etan-lim1}
\eta_n \to  \eta \quad \mbox{strongly in}\ C_w([0,T];L^2(\O)) \ \mbox{and weakly in}\ L^2(0,T;W^{1,2}(\O)).
\ee
Thanks to the compact Sobolev embedding $W^{1,2}(\O) \hookrightarrow \hookrightarrow L^q(\O)$, $q<6$, the
Aubin--Lions--Simon compactness theorem (see \cite{Lions-InC} or \cite{Simon}) implies that
\be\label{etan-lim2}
\eta_n \to  \eta \quad \mbox{strongly in}\  L^2(0,T;L^q(\O))\quad \mbox{for any $q<6$}.
\ee
By interpolation we also have
\be\label{etan-lim3}
\eta_n \to  \eta \quad \mbox{strongly in}\  L^q((0,T) \times \O)\quad \mbox{for any $q<\frac{10}{3}$}.
\ee

We still need to show that $\eta=\int_D \psi \ \dq$, where $\psi$ is the limit of $\psi_n$. This will be done later on, in Section  \ref{sec:limit-psi}.

\subsection{Convergence of the Newtonian stress tensor}\label{sec:lim-SS}
When $0\leq \o  <\frac{5}{3}$ as in \eqref{mu-eta2}, by \eqref{mu-eta1} and \eqref{etan-lim3}, we have that
\be\label{mu-lim1}
(\mu^S(\eta_n),\mu^B(\eta_n)) \to  (\mu^S(\eta),\mu^B(\eta))  \quad \mbox{strongly in}\  L^{\frac{10}{3\o}}((0,T)\times \O).
\ee
Together with \eqref{vun-limit1}, we deduce that
\be\label{Newtonian-lim1}
\SSS_n  \to \SSS :=\mu^S(\eta) \left( \frac{\nabla \vu + \nabla^{\rm T} \vu}{2} - \frac{1}{3} (\Div_x \vu) \II \right) + \mu^B(\eta) (\Div_x \vu) \II\quad {\rm weakly\  in} \ L^{\frac{10}{3\o+5}}((0,T)\times \O;\R^{3\times 3}).
\ee

On the other hand, when $-\frac{4}{3}<\o\leq 0$ as in \eqref{mu-eta2}, then we have
\be\label{mu-lim2}
(\mu^S(\eta_n),\mu^B(\eta_n)) \to  (\mu^S(\eta),\mu^B(\eta))  \quad \mbox{strongly in}\  L^q(0,T;L^q(\O)) \quad \mbox{for any $q<\infty$}.
\ee
Thus, by \eqref{vun-limit2}, we have, for any  $r<\frac{20}{3\o+10}$,
\ba\label{Newtonian-lim10}
\SSS_n  \to \SSS :=\mu^S(\eta) \left( \frac{\nabla \vu + \nabla^{\rm T} \vu}{2} - \frac{1}{3} (\Div_x \vu) \II \right) + \mu^B(\eta) (\Div_x \vu) \II\quad
{\rm weakly~in} \ L^{r}((0,T)\times \O;\R^{3\times 3}).
\ea

We remark that, by rewriting
\ba\label{Newtonian-est2}
\SSS_n  = \sqrt{\mu^S(\eta_n)} \sqrt{\mu^S(\eta_n)} \left( \frac{\nabla \vu_n + \nabla^{\rm T} \vu_n}{2} - \frac{1}{3} (\Div_x \vu_n) \II \right) + \sqrt{\mu^B(\eta_n)}\sqrt{\mu^B(\eta_n)} (\Div_x \vu_n) \II,
\ea
we can relax the constraints in \eqref{mu-eta2} and \eqref{mu-eta3} as in \eqref{mu-eta20} in Remark \ref{rem-bog-th}.

\subsection{Convergence of the probability density function}\label{sec:limit-psi}

First of all, by the energy inequality \eqref{energy-f-n} we have
\be\label{est-psin1}
\sup_{n\in\N}\left( \left\|\mathcal{F}(\tpsi_n)\right\|_{L^\infty(0,T;L^1_M(\O\times D))} + \left\|\sqrt{\tpsi_n}\,\right\|_{L^2(0,T;H_M^1(\O\times D))}  \right)\leq c.
\ee

As in Section 4 in \cite{Barrett-Suli} and Section 5 in \cite{Barrett-Suli4}, we use Dubinski{\u\i}'s compactness theorem (cf. \cite{Dubin}; see also \cite{Barrett-Suli3},  Theorem 3.1 or \cite{Barrett-Suli}) by setting
\ba\label{dubinskii-spaces}
&X:= L^1_M(\O\times D),\quad X_0:=\{ \vp\in X : \vp\geq 0,\ \sqrt{\vp} \in H^1_M(\O\times D) \},\\
&X_1: = M^{-1} W^{s,2}(\O\times D)':=\{ M^{-1} \vp : \vp \in  W^{s,2}(\O\times D)'\}  \quad \mbox{with $s>1+\frac{3}{2}(K+1)$},
\ea
where $X_0$ is a seminomed space (in the sense of Dubinski{\u\i}) with seminorm defined by
$$
[\vp]_{X_0}: = \|\vp\|_{X} + \int_{\O\times D} M\bigg( \left|\nabla_x \sqrt{\vp}\,\right|^2 + \left|\nabla_q \sqrt{\vp}\,\right|^2 \bigg) \, \dq \, \dx.
$$

By \eqref{est-sum1} and \eqref{est-sum2} we have that
\be\label{psin-est1}
\mbox{$\{\widetilde \psi_n\}_{n\in \N}$ is uniformly bounded in $L^1(0,T;X_0)$\quad and \quad $\{\d_t \widetilde \psi_n\}_{n\in \N}$ is uniformly bounded in $L^2(0,T;X_1)$}.
\ee
The continuity of the embedding $X\hookrightarrow X_1$ and the compactness of the embedding $X_0 \hookrightarrow X$ are shown in Section 5 in \cite{Barrett-Suli4}. Then, by virtue of Dubinski{\u\i}'s compact embedding theorem, we have that
\be\label{psin-lim1}
\tpsi_n \to \tpsi \quad \mbox{strongly in} \ L^1(0,T;L_M^1(\Omega\times D)),
\ee
which is equivalent to
\be\label{psin-lim2}
\psi_n \to \psi \quad \mbox{strongly in} \ L^1(0,T;L^1(\Omega\times D)).
\ee
This implies that
\be\label{psin-lim3}
\eta_n = \int_D \psi_n \,\dq \to \int_D \psi \, \dq \quad \mbox{strongly in} \ L^1((0,T)\times \Omega);
\ee
in addition, by the uniqueness of the limit,  the function $\eta$ obtained in Section \ref{sec:lim-eta} satisfies $\eta = \int_D \psi \, \dq$.

Since $\mathcal{F}(\tpsi)$ is nonnegative on $(0,T)\times \Omega \times D$,  by applying Fatou's lemma we have that
\be\label{psin-est3}
 \left\|\mathcal{F}(\tpsi)\right\|_{L^\infty(0,T;L^1_M(\O\times D))} \leq \liminf_{n\to \infty} \left\|\mathcal{F}(\tpsi_n)\right\|_{L^\infty(0,T;L^1_M(\O\times D))} \leq c.
\ee
By the same technique as in Section \ref{sec:est-psi} we deduce from \eqref{psin-est3} that
\be\label{psin-est4}
\tpsi\log\tpsi, \ \tpsi \in L^\infty(0,T;L^1_M(\O\times D)),\quad \psi\log\psi, \ \psi \in L^\infty(0,T;L^1(\O\times D)).
\ee

By \eqref{psin-est1} we have
\be\label{psin-est5}
\d_t \psi \in L^2(0,T;W^{s,2}(\O\times D)') \quad \mbox{for any $s>1 + \frac{3}{2}(K+1)$}.
\ee

From the estimates in \eqref{psin-est4} and \eqref{psin-est5}, by using the second part of Lemma \ref{lem-Cw-Cw*} and the same argument as in Section 4.5 in \cite{Barrett-Suli}, we deduce that
$
 \psi \in C_w([0,T];L^1(\O\times D)).
$

Again, we write $\nabla_{q_i}\tpsi_n = 2 \sqrt{\tpsi_n}\nabla_{q_i}\sqrt{\tpsi_n}$.  By \eqref{psin-lim1} we have
$$
\sqrt{\tpsi_n} \to \sqrt{\tpsi} \quad  \mbox{strongly in} \ L^2(0,T;L_M^2(\Omega\times D)).
$$
Further, by \eqref{est-psin1}, we have that
$$
\nabla_q \sqrt{\tpsi_n} \to \nabla_q \sqrt{\tpsi} \quad \mbox{weakly in} \ L^2(0,T;L_M^2(\Omega\times D;\R^{3K})).
$$
Thus,
\be\label{psin-limf1}
\nabla_{q}\tpsi_n \to \nabla_{q}\tpsi \quad \mbox{weakly in} \ L^1(0,T;L_M^1(\Omega\times D;\R^{3K})).
\ee

Similarly, we also have that
\be\label{psin-limf2}
\nabla_{x}\tpsi_n \to \nabla_{x}\tpsi \quad \mbox{weakly in} \ L^1(0,T;L_M^1(\Omega\times D;\R^{3})),\quad
\nabla_{x}\psi_n \to \nabla_{x}\psi \quad \mbox{weakly in} \ L^1(0,T;L^1(\Omega\times D;\R^{3})).
\ee

\subsection{Convergence of the extra-stress tensor}\label{sec:lim-TT}

We recall the formula for the extra-stress tensor. By \eqref{weak-est} and \eqref{est-TT2} we have that
\be\label{TTn-est1}
\TT_n := k \left(\sum_{i=1}^K \int_{D} M (\nabla_{q_i}\tpsi_n) q_i^{\rm T}\, \dq \right) - \left(k \,\eta_n  + \de\,  \eta_n^2\right) \II,
 \ee
which, because of \eqref{est-TTf}, satisfies
\be\label{TTn-est2}
\sup_{n\in\N}\left(\|\TT_n\|_{L^2(0,T;L^{\frac 43}(\O;\R^{3\times 3}))} + \|\TT_n\|_{L^{\frac 43}(0,T;L^\frac{12}{7}(\O;\R^{3\times 3}))}\right)<c.
\ee
Thus,
\be\label{TTn-lim1}
\TT_n\to \bar \TT \quad \mbox{weakly in} \ L^2(0,T;L^{\frac 43}(\O;\R^{3\times 3})) \cap L^{\frac 43}(0,T;L^\frac{12}{7}(\O;\R^{3\times 3})).
\ee

We still need to show
\be\label{TTn-lim2}
\bar\TT = \TT := k \left(\sum_{i=1}^K \int_{D} M (\nabla_{q_i}\tpsi) q_i^{\rm T}\, \dq \right) - \left(k \,\eta  + \de\,  \eta^2\right) \II.
\ee
Indeed, using the same argument as in Section 4.5 in \cite{Barrett-Suli}, it follows that
\be\label{TTn-lim-strf}
\TT_n \to \TT \quad \mbox{strongly in}\  L^1((0,T)\times \O;\R^{3\times 3}).
\ee
By the uniform estimates \eqref{TTn-est2} and interpolation we also have that
\be\label{TTn-lim-strff}
\TT_n\to \TT \quad \mbox{strongly in } L^r((0,T)\times \O;\R^{3\times 3}) \quad \mbox{for any $r<\frac{20}{13}$}.
\ee

\subsection{Convergence of the nonlinear terms in the Fokker--Panck equation}\label{sec:lim-nonl-FP}

In this section, we study the limits associated with the nonlinear terms in the Fokker--Planck equation \eqref{eq-psi}, which are $\Div_x(\vu_n \psi_n)$ and  $\Div_{q_i}\!\left((\nabla_x \vu_n )\, q_i\, \psi_n\right)$. We would like to show that
\be\label{nonlin-FP-lim0}
\vu_n \psi_n \to  \vu \psi \quad \mbox{in $\mathcal{D}'((0,T)\times \O \times D;\R^{3})$}\quad\mbox{and}\quad \left(\nabla_x \vu_n \right) q_i\psi_n \to \left(\nabla_x \vu \right) q_i \psi \quad \mbox{in}\ \mathcal{D}'((0,T)\times \O\times D;\R^{3\times 3}).
\ee

To this end, let $\vvarphi \in C_c^\infty((0,T)\times \O\times D;\R^3)$ be a test function. We then have that
\ba\label{nonlin-FP-est1}
&\int_0^T\int_{\O\times D}(\vu_n \psi_n - \vu \psi) \cdot  \vvarphi \, \dq\, \dx \, \dt \\
&= \int_0^T\int_{\O\times D}(\vu_n -\vu)\psi \cdot \vvarphi \, \dq\, \dx \, \dt  +  \int_0^T\int_{\O\times D}\vu_n(\psi_n-\psi) \cdot \vvarphi \, \dq\, \dx \, \dt =:I_1^n+I_2^n.
\ea

For $I_1^n$, we have
\ba\label{nonlin-FP-est2}
I_1^n:= \int_0^T\int_{\O\times D}(\vu_n -\vu)\psi \cdot \vvarphi \, \dq\, \dx \, \dt  = \int_0^T\int_{\O}(\vu_n -\vu) \cdot\left(\int_D \psi \, \vvarphi \, \dq \right) \dx \, \dt.
\ea
We observe that
\be\label{nonlin-FP-est3}
\left|\int_D \psi \, \vvarphi \, \dq \right| \leq c\,\int_D \psi\,  \dq = c\, \eta \in L^\infty(0,T;L^2(\O))\cap L^2(0,T;L^6(\O)).
\ee
We recall the weak convergence results for $\vu_n$ stated in \eqref{vun-limit1} and \eqref{vun-limit2}; in particular we have that $\vu_n \to \vu$ weakly in $L^2(0,T;L^{\frac{12}{2+3|\o|}}(\O;\R^3))$, with
$$
\frac{2+3|\o|}{12} + \frac{1}{6} = \frac{1}{3} + \frac{|\o|}{4} <1,
$$
under hypothesis \eqref{mu-eta2} or hypothesis \eqref{mu-eta3}. Therefore,
$\lim_{n\to \infty} I_1^n = 0.$

\medskip

For $I_2^n$, we have
\ba\label{nonlin-FP-est4}
I_2^n:= \int_0^T\int_{\O\times D}\vu_n (\psi_n-\psi) \cdot \vvarphi \, \dq\, \dx \, \dt = \int_0^T\int_{\O} \vu_n \cdot \left(\int_D (\psi_n-\psi) \, \vvarphi \, \dq \right) \dx \,\dt.
\ea
By the strong convergence of $\psi_n$ stated in \eqref{psin-lim2} we have that
\be\label{nonlin-FP-lim2}
\int_D (\psi_n-\psi) \, \vvarphi \, \dq \to \mathbf{0}\quad \mbox{a.e. in $(0,T)\times \O$}.
\ee
Similarly to \eqref{nonlin-FP-est4}, we have that
\be\label{nonlin-FP-est5}
\left|\int_D (\psi_n-\psi) \, \vvarphi \, \dq\right| \in L^\infty(0,T;L^2(\O))\cap L^2(0,T;L^6(\O)) \hookrightarrow L^{\frac{10}{3}}((0,T)\times \O).
\ee
Thanks to \eqref{nonlin-FP-lim2} and \eqref{nonlin-FP-est5}, Vitali's theorem implies that
\be\label{nonlin-FP-lim3}
\int_D (\psi_n-\psi) \, \vvarphi \, \dq  \to 0 \quad \mbox{in}\  L^{q}((0,T)\times \O;\R^3) \quad \mbox{for any $q<\frac{10}{3}$}.
\ee
It follows from \eqref{vun-limit1} (for $\omega \geq 0$) and from \eqref{vun-limit2} (for $\omega \leq 0$) that
$\lim_{n\to \infty} I_2^n = 0.$ Therefore,
\be\label{nonlin-FP-limf1}
\vu_n \psi_n \to  \vu \psi \quad \mbox{in}\ \mathcal{D}'((0,T)\times \O\times D;\R^3).
\ee

\medskip

To prove the convergence of $(\nabla_x \vu_n)\,q_i \psi_n$ to $(\nabla_x \vu )\,q_i\psi$ in $\mathcal{D}'((0,T)\times \O\times D;\R^3)$ one can proceed similarly. We point out that, when $\o\leq 0$, the requirement that $\o > -\frac{4}{3}$, appearing in \eqref{mu-eta3}, is really needed. Indeed, for any test function $\vvarphi$, just as in \eqref{nonlin-FP-lim3}, we have that
\be\label{nonlin-FP-lim5}
\int_D (\psi_n-\psi) \, \vvarphi \,q_i \ \dq  \to \mathbf{0} \quad \mbox{in}\  L^q(0,T;L^q(\O;\R^3)) \quad \mbox{for any $q<\frac{10}{3}$}.
\ee
According to the bounds established in Section \ref{sec:lim-u} for the case $\o\leq 0$, the sequence $\left\{\nabla_x \vu_n\right\}_{n\in \N}$ is uniformly bounded in $L^{\frac{20}{10+3|\o|}}((0,T)\times\O;\R^{3\times 3})$. To deduce the desired convergence result, we therefore need that
\be\label{mu-eta30}
\frac{10+3|\o|}{20} + \frac{3}{10}<1  \Longleftrightarrow |\o| < \frac{4}{3}.
\ee

\subsection{Convergence of the fluid pressure}
As was shown in Section \ref{sec:est-higher-vr} concerning higher integrability of the fluid density and pressure, we have that
\be\label{pn-lim1}
p(\vr_n) \to \overline{p(\vr)} \quad \mbox{weakly in} \ L^{1+\frac{\th}{\g}}((0,T)\times \O).
\ee

By applying this, together with the convergence results from the previous sections, in particular the convergence results for the nonlinear terms stated in Section \ref{sec:lim-convective}, Section \ref{sec:lim-SS} and Section \ref{sec:lim-TT}, we deduce that
\be\label{i1-0}
\d_t \vr + \Div_x (\vr \vu) = 0 \quad \mbox{in}\  \mathcal{D}'((0,T)\times \O),
\ee
\be\label{i2-0}
 \d_t (\vr\vu)+ \Div_x (\vr \vu \otimes \vu) +\nabla_x \overline{p(\vr)} -
\Div_x \SSS =\Div_x \TT +\vr\, \ff\quad \mbox{in}\  \mathcal{D}'((0,T)\times \O;\R^3),
\ee
where the Newtonian stress tensor $\SSS$  and the extra-stress tensor $\TT$ are defined by \eqref{Newtonian-lim1} and  \eqref{TTn-lim2}, respectively.

Concerning the convergence of the nonlinear terms in the Navier--Stokes--Fokker--Planck system with variable viscosity coefficients considered here, it remains to show that $p(\vr) = \overline{p(\vr)},$
which, because of the strict convexity of $p(\cdot)$, is equivalent to the strong convergence of $\vr_n$:
\be\label{pn-limf2}
\vr_n \to \vr \quad \mbox{a.e. in} \ (0,T)\times \O.
\ee

This is also one of the main difficulties in the study of global existence of weak solutions to the compressible Navier--Stokes equations (see \cite{Lions-C, FNP,F-book,N-book}), where the so called \emph{effective viscous flux} introduced by P. L. Lions \cite{Lions-C} plays a crucial role. It turns out that the effective viscous flux as a whole is more regular than its components.
  We will prove the following lemma, which is in the spirit of Proposition 5.1 in \cite{Feireisl-2001}.
\begin{lemma}\label{lem-viscous-flux}
For any $\phi\in \mathcal{D}(0,T)$, $\vp \in \mathcal{D}(\O)$, we have that
\ba\label{vis-flux}\lim_{n\to \infty}\int_0^T \intO{\phi(t)\vp(x) \left[p(\vr_n) - \left(\frac{2\mu^S(\eta_n)}{3}-\mu^B(\eta_n)\right) \Div_x \vu_n \right] T_k(\vr_n) }\, \dt\\
=\int_0^T \intO{\phi(t)\vp(x) \left[ \overline{p(\vr)} - \left(\frac{2\mu^S(\eta)}{3}-\mu^B(\eta)\right) \Div_x \vu\right] \overline{T_k(\vr)} }\, \dt,
\ea
where $T_k$ is a cut-off function defined by $T_k(\cdot) :=  k \,T(\frac{\cdot}{k})$ for some concave function $T\in C^\infty([0,\infty))$ such that $T(s)=s$ for $0\leq s\leq 1$ and $T(s)=2$ for $s\geq 3$. Here $\overline{T_k(\vr)}$ is the weak* limit of the sequence $\{T_k(\vr_n)\}_{n\in \N}$ in $L^\infty((0,T)\times \O)$ as $n$ goes to infinity.
\end{lemma}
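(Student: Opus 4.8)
The plan is to adapt the Lions--Feireisl ``effective viscous flux'' argument (Proposition~5.1 of \cite{Feireisl-2001}) to the present situation, the only genuinely new point being that the Newtonian stress carries the $\eta_n$-dependent coefficients $\mu^S(\eta_n),\mu^B(\eta_n)$, which, crucially, converge \emph{strongly} by Section~\ref{sec:lim-SS}. First I would fix $k$ and test the weak momentum identity \eqref{weak-form3} for $(\vr_n,\vu_n)$ with $\vvarphi_n:=\phi(t)\,\vp(x)\,\A[T_k(\vr_n)]$, where $\A$ is the vector Riesz operator of Section~\ref{sec:Riesz} acting on the zero-extension of $T_k(\vr_n)$; since $\vp\in\mathcal{D}(\O)$, $\vvarphi_n$ is compactly supported in $\O$, and admissibility follows from a mollification, from the bound $0\le T_k(\vr_n)\le 2k$, the mapping properties in Lemma~\ref{lem-Riesz1}, and the renormalized continuity equation \eqref{weak-renormal1} with $b=T_k$ (permitted by \eqref{cond-b-renormal}), which gives $\partial_t T_k(\vr_n)=-\Div_x(T_k(\vr_n)\vu_n)-(T_k'(\vr_n)\vr_n-T_k(\vr_n))\Div_x\vu_n$ and hence control of $\partial_t\A[T_k(\vr_n)]=\A[\partial_t T_k(\vr_n)]$. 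Expanding by the Leibniz rule, using $\Div_x\A[g]=-g$, $\nabla_x\A_j[g]=-\RR_{ij}[g]$ and the trace identity $\sum_j\RR_{jj}=\II$, and substituting the renormalized equation for $\partial_t T_k(\vr_n)$, one isolates $\int_0^T\!\!\int_\O\phi\vp\,H_n\,T_k(\vr_n)$ with $H_n:=p(\vr_n)-\bigl(\tfrac23\mu^S(\eta_n)-\mu^B(\eta_n)\bigr)\Div_x\vu_n$ (the square-bracket quantity in \eqref{vis-flux}), and is left, on the other side, with a remainder $\mathrm{R}_n$ of three kinds: (a) the Riesz commutator $\int\phi\vp\,\vr_n u_n^i\bigl(u_n^j\RR_{ij}[T_k(\vr_n)]-\RR_{ij}[T_k(\vr_n)u_n^j]\bigr)$; (b) $\int\phi\vp\,\mu^S(\eta_n)\,\mathcal{C}_n$ with $\mathcal{C}_n:=\nabla_x\vu_n:\nabla_x\vc z_n-\Div_x\vu_n\,\Div_x\vc z_n$ and $\vc z_n:=\nabla_x\Delta^{-1}[T_k(\vr_n)]$ (so ${\rm curl}\,\vc z_n=0$ and $\mathcal{C}_n$ is a perfect spatial divergence); and (c) ``lower-order'' terms --- the force term, the time-derivative term $\int\phi'\vp\,\vr_n\vu_n\cdot\A[T_k(\vr_n)]$, the extra-stress term $\int\phi\vp\,\TT_n:\nabla_x\A[T_k(\vr_n)]$, the $(T_k'(\vr_n)\vr_n-T_k(\vr_n))\Div_x\vu_n$ contribution, and all remainders carrying $\nabla_x\vp$ --- ``lower-order'' precisely because $\A$ gains one derivative.

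I would then perform the identical computation on the limit momentum equation \eqref{i2-0}, tested with $\phi(t)\vp(x)\A[\overline{T_k(\vr)}]$. This requires a renormalized continuity equation for $\overline{T_k(\vr)}$, obtained by passing to the limit in \eqref{weak-renormal} with $b=T_k$: from the $L^\infty$-bound on $T_k(\vr_n)$ and the control of $\partial_t T_k(\vr_n)$, Lemma~\ref{lem-Cw} gives $T_k(\vr_n)\to\overline{T_k(\vr)}$ strongly in $C([0,T];W^{-1,r}(\O))$, so that $T_k(\vr_n)\vu_n\rightharpoonup\overline{T_k(\vr)}\vu$ and $(T_k'(\vr_n)\vr_n-T_k(\vr_n))\Div_x\vu_n\rightharpoonup\overline r$, whence $\partial_t\overline{T_k(\vr)}+\Div_x(\overline{T_k(\vr)}\vu)+\overline r=0$ in $\mathcal{D}'$ (note we do \emph{not} yet know that the limit is a renormalized solution). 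Running the same algebra --- now with $\SSS$ from \eqref{Newtonian-lim1}/\eqref{Newtonian-lim10}, with $\TT$, and with $\overline{p(\vr)}$ --- produces $\int\phi\vp\,\bar H\,\overline{T_k(\vr)}=\mathrm{R}_\infty$, where $\bar H=\overline{p(\vr)}-\bigl(\tfrac23\mu^S(\eta)-\mu^B(\eta)\bigr)\Div_x\vu$ is the weak limit of $H_n$ --- here $\mu^S(\eta_n)\Div_x\vu_n\rightharpoonup\mu^S(\eta)\Div_x\vu$ because $\mu^S(\eta_n)\to\mu^S(\eta)$ strongly in $L^a((0,T)\times\O)$ with $a$ large enough, one of the places where \eqref{mu-eta2}/\eqref{mu-eta3} enter --- and $\mathrm{R}_\infty$ is the analogue of $\mathrm{R}_n$ with $(\vr,\vu,\eta,\overline{T_k(\vr)},\overline r,\TT)$ in place of $(\vr_n,\vu_n,\eta_n,T_k(\vr_n),r_n,\TT_n)$.

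It remains to prove $\mathrm{R}_n\to\mathrm{R}_\infty$. The type-(c) terms converge by soft arguments: $\A[T_k(\vr_n)]\to\A[\overline{T_k(\vr)}]$ \emph{strongly} in $C([0,T];C(\overline{\O}))$ (Arzel\`{a}--Ascoli, using a uniform $W^{1,p}$-bound with $p>3$ and the time-equicontinuity from Step~1), together with $\vr_n\to\vr$ and $\vr_n\vu_n\to\vr\vu$ strongly in $C([0,T];W^{-1,s}(\O))$ (Sections~\ref{sec:vr-lim}, \ref{sec:lim-convective}) and $\TT_n\to\TT$ strongly in $L^r((0,T)\times\O)$ (Section~\ref{sec:lim-TT}); in particular the $r_n$-term is handled by pairing $\vr_n\vu_n$ (strong, negative regularity) against the compactly supported $\vp\,\A[r_n]$ (weakly convergent, positive regularity), which is exactly where the lower bound on $\g$ in \eqref{mu-eta3} is used. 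For the type-(a) commutator I would use the Coifman--Meyer estimate (Lemma~\ref{lem-Riesz3}) with $w=u_n^j\in W^{1,r}$ and $v=T_k(\vr_n)\in L^p$ (any finite $p$, $T_k$ being bounded): the commutator is bounded in $L^{q_1}(0,T;W^{\beta,s}(\O))$ for suitable $\beta>0$, $s>1$, hence precompact in $L^{q_1}(0,T;L^s(\O))$ by Lemma~\ref{lem-frac-sob}, and, paired against $\vr_n u_n^i$ with the weak limit identified by the Div--Curl lemma (Lemma~\ref{lem-div-curl2}), it converges to the corresponding term of $\mathrm{R}_\infty$. For type-(b), since ${\rm curl}\,\vc z_n=0$ the Div--Curl lemma gives $\mathcal{C}_n\rightharpoonup\mathcal{C}_\infty:=\nabla_x\vu:\nabla_x\vc z-\Div_x\vu\,\Div_x\vc z$ weakly in some $L^\rho((0,T)\times\O)$, $\rho>1$, and since $\mu^S(\eta_n)\to\mu^S(\eta)$ strongly in $L^a$ with $\tfrac1a+\tfrac1\rho<1$ --- which in the regime \eqref{mu-eta2} forces exactly $\o<\tfrac53$ --- we get $\int\phi\vp\,\mu^S(\eta_n)\mathcal{C}_n\to\int\phi\vp\,\mu^S(\eta)\mathcal{C}_\infty$. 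Combining the three steps, $\lim_n\int\phi\vp\,H_nT_k(\vr_n)=\lim_n\bigl(\mathrm{R}_n-\int\phi\vp\,\mu^S(\eta_n)\mathcal{C}_n\bigr)=\mathrm{R}_\infty-\int\phi\vp\,\mu^S(\eta)\mathcal{C}_\infty=\int\phi\vp\,\bar H\,\overline{T_k(\vr)}$, which is \eqref{vis-flux}.

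The main obstacle is exactly the interplay between the variable viscosity and the compensated-compactness mechanism: after the Riesz calculus one must pass to the limit in products $\mu^S(\eta_n)\times(\text{div--curl, or commutator, quantity})$ in which the viscosity converges only strongly in $L^a$ for a \emph{finite} exponent $a$ --- not uniformly --- while the companion factor converges merely weakly and has poor integrability (since $\nabla_x\vu_n$ lies in $L^2$ of only a small Lebesgue exponent once $\o\neq0$); making all these H\"older bounds close is what dictates the hypotheses \eqref{mu-eta2}, \eqref{mu-eta3}, stronger than those of Remark~\ref{rem-bog-th}. Everything else --- the admissibility of $\vvarphi_n$, the precise bookkeeping of the $\nabla_x\vp$-remainders, and the verification of the exponent conditions in Lemmas~\ref{lem-div-curl2}, \ref{lem-Riesz3} and \ref{lem-frac-sob} separately in the two regimes --- is lengthy but routine.
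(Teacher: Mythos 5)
Your overall strategy is the same as the paper's: test \eqref{weak-form3} and the limit equation with $\phi\vp\,\A[T_k(\vr_n)]$ and $\phi\vp\,\A[\overline{T_k(\vr)}]$, subtract, and show the remainders converge. Your treatment of the extra-stress term, of the force and time-derivative terms, of the $\nabla_x\vp$-remainders, and your identification of $\o<\tfrac53$ and $\o>-\tfrac43$ as the critical exponents also match the paper. The genuine issue is your treatment of the type-(b) viscous remainder, which is the \emph{only} place the variable viscosity makes the argument new, and it is precisely there that the argument you sketch does not close.

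You claim that, because ${\rm curl}\,\vc z_n=0$, ``the Div--Curl lemma gives $\mathcal{C}_n\rightharpoonup\mathcal{C}_\infty$ weakly in some $L^\rho((0,T)\times\O)$,'' and you then multiply by $\mu^S(\eta_n)\to\mu^S(\eta)$ strongly. But Lemma \ref{lem-div-curl2} is a statement about a space--time domain $Q\subset\R^d$ and requires \emph{both} $\Div\,{\bf U}_n$ \emph{and} ${\rm curl}\,{\bf V}_n$ to be precompact in $W^{-1,s}(Q)$ with $Q=(0,T)\times\O$. Any pairing that writes $\mathcal{C}_n=\nabla_x\vu_n:\nabla_x\vc z_n-\Div_x\vu_n\,\Div_x\vc z_n$ as ${\bf U}_n\cdot{\bf V}_n$ puts $\nabla_x u^i_n$ on one side; the space--time curl of $(0,\nabla_x u^i_n)$ contains the components $\d_t\nabla_x u^i_n$, and nothing in the energy estimates controls $\d_t\vu_n$. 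The spatial curl-free structure of $\vc z_n$ alone therefore does not trigger the lemma. Equivalently: $\mathcal{C}_n$ is a bilinear expression in two merely-weakly-convergent spatial gradients, and the compensation it enjoys is in $x$ for each fixed $t$, not jointly in $(t,x)$; turning that into joint weak convergence needs a source of time compactness, and you have not supplied one for this term.

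The paper's route is different and explicitly designed around this obstacle. By self-adjointness of $\RR_{ij}$ one moves the Riesz operator onto $\vp\,\mu^S(\eta_n)\,\d_j\vu_n^i$, writes
\begin{equation*}
\RR_{ij}\!\left[\vp\,\mu^S(\eta_n)\,\d_j\vu_n^i\right]
= \vp\,\mu^S(\eta_n)\,\RR_{ij}\!\left[\d_j\vu_n^i\right] + R_n^{ij},
\end{equation*}
and uses the Coifman--Meyer commutator estimate (Lemma \ref{lem-Riesz3}) with $w=\vp\,\mu^S(\eta_n)\in W^{1,r_\o}$ and $v=\d_j\vu_n^i\in L^2$ to bound $R_n^{ij}$ in $L^1(0,T;W^{\beta,s}(\R^3))$; the exponent condition $\tfrac1{r_\o}+\tfrac12-\tfrac13<1$ in that lemma is exactly what gives $\o<\tfrac53$ (and $\tfrac12+\tfrac{2+|\o|}{4}-\tfrac13<1$ gives $\o>-\tfrac43$). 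The companion factor is then $T_k(\vr_n)$, not $\nabla\vu_n$, and the space--time Div--Curl lemma is applied to ${\bf U}_n=(T_k(\vr_n),T_k(\vr_n)\vu_n)$, ${\bf V}_n=(R_n^{ij},0,0,0)$: here $\Div_{t,x}{\bf U}_n$ is controlled precisely by the renormalized continuity equation \eqref{renomal-Tk}, and ${\rm curl}_{t,x}{\bf V}_n$ involves no time derivative of $R_n^{ij}$, only its spatial derivatives, which are controlled by the $W^{\beta,s}$-bound and Lemma \ref{lem-frac-sob}. Your remainder and the paper's are of course the same integral ($\int\vp\,\mu^S(\eta_n)\,\mathcal{C}_n\,\dx=\sum_{ij}\int R_n^{ij}\,T_k(\vr_n)\,\dx$), but the paper's factorization is what makes the Div--Curl hypotheses verifiable. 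You should either adopt that commutator-based factorization, or else supply a separate argument for the joint-in-$(t,x)$ weak continuity of $\mathcal{C}_n$ that exhibits the required time compactness; as written, the step is a gap.

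A secondary point: you use Lemma \ref{lem-Riesz3} for the convective commutator with $w=u_n^j$, which is not what the paper does (the paper treats the convective term by the standard strong convergence $\vr_n\vu_n\to\vr\vu$ in $C([0,T];W^{-1,r})$ and the uniform $W^{1,p}$-bound and Arzel\`a--Ascoli compactness of $\A[T_k(\vr_n)]$, as in the constant-viscosity theory). Your alternative is plausible but unnecessary; the commutator machinery is really only needed for the viscous term, where $\mu^S(\eta_n)$ is not smooth.
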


The viscosity coefficients $\mu^B$ and $\mu^S$ in our case are not constant, which gives rise to additional complications in the proof of this lemma. We shall employ the commutator estimates stated in Lemma \ref{lem-Riesz3} and the techniques used in Section 3.6.5 in \cite{F-N-book} to overcome the resulting difficulties.

\begin{proof}[Proof of Lemma \ref{lem-viscous-flux}] As in the proof of the effective viscous flux lemma for the compressible Navier--Stokes equations with constant viscosity coefficients (see Proposition 5.1 in \cite{Feireisl-2001} or Proposition 7.36 in \cite{N-book}), we introduce the following test functions:
\be\label{def-test-flux}
\vv_n(t,x) = \phi(t) \vp(x) \A \left[T_k(\vr_n) \right],\quad \vv(t,x) = \phi(t) \vp(x) \A \left[ \overline{T_k(\vr)}\right],
\ee
where $\phi\in C_c^\infty(0,T), \ \vp \in C_c^\infty(\O)$,  and $\A$ is the Fourier integral operator introduced in \eqref{def-Reize}. We remark that $\vr_n$ and $\vr$ are extended by zero outside of $\O$ in \eqref{def-test-flux}.

Taking these $\vv_n$ and $\vv$ as test functions in the weak formulation \eqref{weak-form3} and the weak formulation of \eqref{i2-0}, respectively, results in
\begin{align}
\label{weak-vn}
&\int_0^T \intO{ \big[ \vr_n \vu_n \cdot \partial_t \vv_n + (\vr_n \vu_n \otimes \vu_n) : \Grad \vv_n  + p(\vr_n)\, \Div_x \vv_n - \SSS_n : \Grad \vv_n \big]} \,\dt
\nonumber\\
&\hspace{3in}= \int_0^T \intO{ \TT_n : \nabla_x \vv_n  - \vr_n\, \ff \cdot \vv_n} \, \dt,\\
\label{weak-v}
&\int_0^T \intO{ \left[ \vr \vu \cdot \partial_t \vv + (\vr \vu \otimes \vu) : \Grad \vv  + \overline{p(\vr)}\, \Div_x \vv - \SSS : \Grad \vv \right]}\,\dt
\nonumber\\
&\hspace{3in}= \int_0^T \intO{ \TT : \nabla_x \vv  - \vr\, \ff \cdot \vv} \, \dt.
\end{align}

The main idea is to pass to the limit $n\to \infty$ in \eqref{weak-vn}, and compare the resulting limit with \eqref{weak-v}, which will ultimately imply our desired result \eqref{vis-flux}. Since, following the contributions of Lions \cite{Lions-C} and Feireisl \cite{F-book}, this type of argument is by now well understood, instead of including all of the details here we shall focus on the terms that do not appear in the case of the compressible Navier--Stokes equations with constant viscosity coefficients. These `new' terms are the following:
\ba\label{flux-new-terms1}
&\int_0^T \intO{ \TT_n : \nabla_x \vv_n} \, \dt,\quad \int_0^T \intO{ \TT : \nabla_x \vv } \, \dt,\quad \int_0^T \intO{  \SSS_n : \Grad \vv_n } \, \dt,\quad \int_0^T \intO{  \SSS : \Grad \vv } \, \dt,\nn
\ea
and the goal is to prove that
\ba\label{flux-new-terms2}
\lim_{n\infty}\int_0^T \intO{ \TT_n : \nabla_x \vv_n} \, \dt = \int_0^T \intO{ \TT : \nabla_x \vv } \, \dt,
\ea
and
\ba\label{flux-new-terms3}
&\lim_{n\to\infty}\int_0^T \intO{  \SSS_n : \Grad \vv_n } \, \dt -  \int_0^T \intO{  \SSS : \Grad \vv } \, \dt \\
&= \lim_{n\to\infty} \int_0^T \intO{\phi(t)\vp(x) \left[ \left(\frac{2\mu^S(\eta_n)}{3}-\mu^B(\eta_n)\right) \Div_x \vu_n \right] T_k(\vr_n) }\, \dt \\
&\quad - \int_0^T \intO{\phi(t)\vp(x) \left[ \left(\frac{2\mu^S(\eta)}{3}-\mu^B(\eta)\right) \Div_x \vu\right] \overline{T_k(\vr)} }\, \dt.
\ea

By the strong convergence of $\TT_n$ stated in \eqref{TTn-lim-strff} in Section \ref{sec:lim-TT} it is straightforward to show the convergence result \eqref{flux-new-terms2}, so we only focus on proving \eqref{flux-new-terms3}. We begin by noting that
\ba\label{flux-new31}
\int_0^T \intO{  \SSS_n : \Grad \vv_n } \, \dt  = \int_0^T \phi(t)\intO{  \SSS_n :\, \left(\nabla_x \vp \otimes \A[ T_k(\vr_n)]\right)} \, \dt + \int_0^T \phi(t)\intO{ \vp \,  \SSS_n : \RR [ T_k(\vr_n)]} \, \dt,\nn
\ea
where $\RR$ is the Riesz operator defined in \eqref{def-Reize} in Section \ref{sec:Riesz}.

For any fixed $k\in\N$,  the sequence $\{ T_k(\vr_n)\}_{n\in \N}$ is uniformly bounded in $ L^\infty(0,T;L^r(\R^3))$ for all $r \in [1, \infty]$. Thus, by Lemma \ref{lem-Riesz1}, we have that
\be\label{A-Tk-vrn-1}
\sup_{n\in \N} \left\| \A[T_k(\vr_n)]\right\|_{L^\infty(0,T;W^{1,r}(\O;\R^3))} \leq c\quad \mbox{for any $r \in (1,\infty)$}.
\ee

Observing that $T_k(\cdot)$ fulfills the properties in \eqref{cond-b-renormal}, it follows from \eqref{weak-renormal1} that
\be\label{renomal-Tk}
\d_t T_k(\vr_n) + \Div_x\left(T_k(\vr_n)\vu_n\right) + \left(T_k'(\vr_n)\vr_n-T_k(\vr_n)\right)\Div_x \vu_n =0\quad \mbox{in $ \mathcal{D}'((0,T)\times \R^3)$}.
\ee
This implies that
\ba\label{A-Tk-vrn-2}
\d_t \A[ T_k(\vr_n)]= \A[\d_t T_k(\vr_n)] = - \A\left[ \Div_x\left(T_k(\vr_n)\vu_n\right) \right] - \A\left[ \left(T_k'(\vr_n)\vr_n-T_k(\vr_n)\right)\Div_x \vu_n \right].
\ea
Since
$
\A_j \Div_x = -\sum_{i=1}^3\d_j \d_i \Delta^{-1}  = - \sum_{i=1}^3\RR_{ij}
$
are Riesz type operators, by Lemma \ref{lem-Riesz1} we have that
$$
\|\A_j \left[ \Div_x \vvarphi \right]\|_{L^r(\R^3:\R^3)}\leq c \, \|\vvarphi\|_{L^r(\R^3;\R^3)} \quad \mbox{for any $\vvarphi\in L^r(\R^3;\R^3)$ and any $r \in (1,\infty)$}.
$$
By using \eqref{vun-limit1} and \eqref{vun-limit2} we deduce that
\ba\label{A-Tk-vrn-3}
\left\|\A\left[ \Div_x\left(T_k(\vr_n)\vu_n\right) \right]\right\|_{L^2(0,T;L^6(\O;\R^3))} &\leq c &&\quad \mbox{when $\o\geq 0$}; \\
\left\|\A\left[ \Div_x\left(T_k(\vr_n)\vu_n\right) \right]\right\|_{L^2(0,T;L^{\frac{12}{2+6|\o|}}(\O;\R^3))} &\leq c &&\quad\mbox{when $\o\leq 0$}.
\ea
Furthermore, by Lemma \ref{lem-Riesz1} in conjunction with \eqref{vun-limit1} and \eqref{vun-limit2} we deduce that
\ba\label{A-Tk-vrn-4}
\left\| \A\left[ \left(T_k'(\vr_n)\vr_n-T_k(\vr_n)\right)\Div_x \vu \right] \right\|_{L^2(0,T;L^6(\O;\R^3))} &\leq c &&\quad \mbox{when $\o\geq 0$}; \\
\left\| \A\left[ \left(T_k'(\vr_n)\vr_n-T_k(\vr_n)\right)\Div_x \vu \right] \right\|_{L^2(0,T;L^{\frac{12}{2+6|\o|}}(\O;\R^3))} &\leq c &&\quad \mbox{when $\o\leq 0$}.
\ea
Thus,
\ba\label{A-Tk-vrn-5}
\left\| \d_t \A[ T_k(\vr_n)]  \right\|_{L^2(0,T;L^6(\O;\R^3))} &\leq c &&\quad \mbox{when $\o\geq 0$};\\
\left\|  \d_t \A[ T_k(\vr_n)] \right\|_{L^2(0,T;L^{\frac{12}{2+6|\o|}}(\O;\R^3))} &\leq c &&\quad \mbox{when $\o\leq 0$}.
\ea
It follows from the uniform estimates in \eqref{A-Tk-vrn-1} and \eqref{A-Tk-vrn-5} and the Aubin--Lions--Simon compactness theorem that
\ba\label{A-Tk-vrn-f}
\A[T_k(\vr_n)] \to \overline{\A[T_k(\vr)]} = \A[\overline{T_k(\vr)}] \quad \mbox{strongly in $L^\infty(0,T;L^r(\O;\R^3))$ \quad for any $r\in (1,\infty)$}.
\ea
Together with the weak convergence of $\SSS_n$ stated in \eqref{Newtonian-lim1} and \eqref{Newtonian-lim10}, we deduce that
\ba\label{flux-new32}
 \lim_{n\to\infty}\int_0^T \phi(t)\intO{  \SSS_n : (\nabla_x \vp \otimes \A[ T_k(\vr_n)])} \, \dt = \int_0^T \phi(t)\intO{  \SSS : (\nabla_x \vp \otimes \A[ \overline{T_k(\vr)}])} \, \dt.
\ea
Therefore, to show \eqref{flux-new-terms3}, it suffices to prove that
\ba\label{flux-new33}
& \lim_{n\to \infty} \int_0^T \phi(t)\intO{ \vp \,  \SSS_n : \RR [ T_k(\vr_n)]} \, \dt -  \int_0^T \phi(t)\intO{ \vp \,  \SSS : \RR [ \overline{T_k(\vr)}]} \, \dt\\
 &= \lim_{n\to\infty} \int_0^T \intO{\phi(t)\vp(x) \left[ \left(\frac{2\mu^S(\eta_n)}{3}-\mu^B(\eta_n)\right) \Div_x \vu_n \right] T_k(\vr_n) }\, \dt \\
&\quad - \int_0^T \intO{\phi(t)\vp(x) \left[ \left(\frac{2\mu^S(\eta)}{3}-\mu^B(\eta)\right) \Div_x \vu\right] \overline{T_k(\vr)} }\, \dt.
\ea

By \eqref{i3} we have that
\ba\label{flux-new34}
&\int_0^T \phi(t)\intO{ \vp \,  \SSS_n : \RR [ T_k(\vr_n)]} \ \dt= \int_0^T \phi(t)\intO{ \vp \,  \mu^S(\eta_n) \left( \frac{\nabla \vu_n + \nabla^{\rm T} \vu_n}{2} \right) : \RR [ T_k(\vr_n)]} \, \dt\\
&\qquad + \int_0^T \phi(t)\intO{ \vp \,  \left(\mu^B(\eta_n)-\frac{\mu^S(\eta_n)}{3}\right) (\Div_x \vu_n) \II: \RR [ T_k(\vr_n)]} \, \dt\\
& = \int_0^T \phi(t)\intO{ \vp \,  \mu^S(\eta_n) \nabla \vu_n  : \RR [ T_k(\vr_n)]} \, \dt + \int_0^T \phi(t)\intO{ \vp \,  \left(\mu^B(\eta_n)-\frac{\mu^S(\eta_n)}{3}\right) (\Div_x \vu_n) \, [ T_k(\vr_n)]} \, \dt,
\ea
where have we used the fact that $\RR$ is symmetric (see Lemma \ref{lem-Riesz1}) and that $\II : \RR = \sum_{i=1}^3 \RR_{ii}=\II$.
Further, by Lemma \ref{lem-Riesz1}, and noting that $\sum_{i,j=1}^3 \RR_{ij}\partial_j \vu_n^i = \Div_x \vu_n$, we have that
\ba\label{flux-new35}
& \int_0^T \phi(t)\intO{ \vp \,  \mu^S(\eta_n) \nabla \vu_n  : \RR [ T_k(\vr_n)]} \, \dt = \int_0^T \phi(t)\intO{ \sum_{i,j=1}^3 \RR_{ij} \left[\vp \,  \mu^S(\eta_n)\, \d_j \vu_n^i \right] T_k(\vr_n)} \, \dt\\
&\quad =\int_0^T \phi(t)\intO{ \sum_{i,j=1}^3 \left( \vp \,  \mu^S(\eta_n)\, \RR_{ij} \left[ \d_j \vu_n^i \right] T_k(\vr_n) + R_n^{ij} \, T_k(\vr_n) \right)\!} \, \dt \\
&\quad = \int_0^T \phi(t)\intO{ \left(\vp \,  \mu^S(\eta_n)\, (\Div_x \vu_n)  \, T_k(\vr_n) + \sum_{i,j=1}^3 R_n^{ij} \, T_k(\vr_n) \right)\!} \, \dt,
\ea
where $R_n^{ij}$ is the commutator defined by $R_n^{ij}:= \RR_{ij} \left[\vp \,  \mu^S(\eta_n) \d_j \vu_n^i \right]- \vp \,  \mu^S(\eta_n) \RR_{ij} \left[ \d_j \vu_n^i \right].$ We will use Lemma \ref{lem-Riesz3} to derive uniform bounds on $R_n^{ij}$. We first consider the case $0\leq \o < \frac 53$. Then, by \eqref{mu-eta1} and \eqref{etan-est1}, the sequence
$\{\nabla_x \mu^S(\eta_n)\}_{n\in\N} = \{(\mu^S)'(\eta_n) \nabla_x \eta_n\}_{n\in\N}$ is uniformly bounded in $L^2(0,T;L^2(\O;\R^3))$ when $0\leq \o\leq 1$, and in $L^{2}(0,T;L^\frac{2}{\o}(\O;\R^3))$ when $1\leq \o < \frac 53$.
Thus,
$$
\sup_{n\in \N}\| \mu^S(\eta_n) \|_{L^2(0,T;W^{1,2}(\O))} \leq c \quad \mbox{when}\ 0\leq \o\leq 1;\qquad \sup_{n\in \N}\| \mu^S(\eta_n) \|_{L^2(0,T;W^{1,\frac{2}{\o}}(\O))} \leq c \quad \mbox{when}\ 1 \leq \o < \frac 53.
$$

By \eqref{vun-limit1} the sequence $\{\nabla_x \vu_n\}_{n\in\N}$ is uniformly bounded in $ L^2(0,T; L^{2}(\O;\R^{3 \times 3}))$. Furthermore, we note that
\ba\label{mu-eta10}
\frac{1}{2} + \frac{1}{2} -\frac{1}{3} = \frac{2}{3}< 1;\quad \frac{1}{2}+\frac{\o}{2} - \frac{1}{3} =\frac{1+3\o}{6} < 1, \quad \mbox{as long as}\ 0\leq \o < \frac{5}{3}.
\ea
Hence, by Lemma \ref{lem-Riesz3}, for any $s>1$ such that
\ba\label{est-mun-2}
 \frac{1}{s} >\frac{1}{2}+\frac{1}{2} - \frac{1}{3}=\frac{2}{3} \quad \mbox{when}\ 0\leq \o\leq 1;\quad \frac{1}{s} >\frac{1}{2}+\frac{\o}{2} - \frac{1}{3} =\frac{1+3\o}{6} \quad \mbox{when}\ 1\leq \o < \frac{5}{3},
\ea
we have, for $r_\o=2$ when $0\leq \o\leq 1$ and for $r_\o=\frac{2}{\o}$ when $1\leq \o < \frac{5}{3}$, that
\ba\label{flux-commu-2}
&\left\|R_n^{ij}\right\|_{W^{\b,s}(\R^3)} \leq c\, \| \vp \mu^S(\eta_n) \|_{W^{1,r_\o}(\O)} \|\d_j \vu_n^i\|_{L^2(\O)},
\ea
where $\b\in (0,1)$ is such that
\ba\label{est-mun-3}
\frac{\b}{3} =\frac 13 + \frac 1s - 1 \quad \mbox{when}\ 0\leq \o\leq 1;\quad \frac{\b}{3} =\frac 13 + \frac 1s - \frac{1+\o}{2}\quad \mbox{when}\ 1\leq \o < \frac 53.
\ea
Therefore, for some $s>1$ and some $0<\b<1$, we have the uniform estimate
\ba\label{flux-commu-3}
\sup_{n\in \N} \left\|R_n^{ij} \right\|_{L^1(0,T;W^{\b,s}(\R^3))} \leq c\, \sup_{n\in \N} \| \vp \mu^S(\eta_n) \|_{L^2(0,T;W^{1,r_\o}(\O))}\, \sup_{n\in \N} \|\d_j \vu_n^i\|_{L^2(0,T;L^2(\O))}\leq c.
\ea
Hence, by the strong convergence of $\eta_n$ shown in Section \ref{sec:lim-eta} we have that
\ba\label{flux-commu-4}
R_n^{ij} \to R^{ij}:= \RR_{ij} \left[\vp \,  \mu^S(\eta) \d_j \vu^i \right]- \vp \,  \mu^S(\eta) \RR_{ij} \left[ \d_j \vu^i \right] \quad \mbox{weakly in} \ L^{\frac{10}{3\o+5}}((0,T)\times \O).
\ea

By the boundedness of $T_k$ we have
\be\label{Tk-weak}
T_k(\vr_n)\to \overline{T_k(\vr)} \quad \mbox{weakly in $L^r((0,T)\times \O)$\quad for any $1<r<\infty$}.
\ee
Next, we want to prove the convergence of the product
\ba\label{flux-commu-5}
R_n^{ij}\, T_k(\vr_n) \to R^{ij} \overline{T_k(\vr)} \quad \mbox{weakly in} \ L^{r}(0,T;L^{r}(\O)) \quad \mbox{for any $r<\frac{10}{3\o+5}$}.
\ea
We shall use the Div-Curl lemma to this end in the time-space variables by setting
\be\label{div-curl-set}
{\bf U}_n := (T_k(\vr_n), T_k(\vr_n) \vu_n ),\quad {\bf V}_n := (R_n^{ij},0,0,0).
\ee
By \eqref{renomal-Tk} we then have that
\be\label{div-curl1}
\Div_{t,x} {\bf U}_n = \d_t T_k(\vr_n) + \Div_x\!\left(T_k(\vr_n)\vu_n\right) = - \left(T_k'(\vr_n)\vr_n-T_k(\vr_n)\right)\Div_x \vu_n,
\ee
which is uniformly bounded in $L^2((0,T)\times \O)$ and, therefore,
\[\{\Div_{t,x} {\bf U}_n\}_{n \in \N}\quad \mbox{is precompact in $W^{-1,2}((0,T)\times \O)$}.\]
Next, on observing that ${\rm curl}_{t,x}\,  {\bf V}_n$ does not include the time-derivative of $R_n^{ij}$, by \eqref{flux-commu-3} and Lemma \ref{lem-frac-sob} we have that
\[
\{{\rm curl}_{t,x} {\bf V}_n\}_{n\in \N} \quad \mbox{is precompact in $W^{-1,s}((0,T)\times \O;\R^{3\times 3})$}.
\]
Thus, the Div-Curl lemma (Lemma \ref{lem-div-curl2}) implies \eqref{flux-commu-5}. This gives
\ba\label{flux-new36}
\lim_{n\to \infty} \int_0^T \phi(t)\intO{ \sum_{i,j=1}^3  R_n^{ij} \, T_k(\vr_n) } \ \dt = \int_0^T \phi(t)\intO{ \sum_{i,j=1}^3  R^{ij} \, T_k(\vr) } \ \dt.
\ea

We shall now briefly summarize the proof of \eqref{flux-new36} when $-\frac{4}{3}<\o\leq 0$.   By \eqref{mu-eta1} and \eqref{etan-est1} the sequence $\{ \mu^S(\eta_n)\}_{n\in\N}$ is uniformly bounded in $L^2(0,T;W^{1,2}(\O))$ when $\o\leq 0$. By \eqref{vun-limit2} the sequence $\{\nabla_x \vu_n\}_{n\in\N}$ is uniformly bounded in $ L^2(0,T; L^{\frac{4}{2+|\o|}}(\O);\R^{3\times 3})$. We have that
\ba\label{mu-eta10-w<0}
\frac{1}{2}+\frac{2+|\o|}{4} - \frac{1}{3} < 1, \quad \mbox{as long as}\ -\frac{4}{3}< \o \leq 0.
\ea
Then, by Lemma \ref{lem-Riesz3}, for any $s>1$ such that
\ba\label{est-mun-2-w<0}
& \frac{1}{s} > \frac{1}{2}+\frac{2+|\o|}{4} - \frac{1}{3},
\ea
we have that
\ba\label{flux-commu-2-w<0}
&\left\|R_n^{ij}\right\|_{W^{\b,s}(\R^3)} \leq c\, \| \vp \mu^S(\eta_n) \|_{W^{1,2}(\O)} \|\d_j \vu_n^i\|_{L^\frac{4}{2+|\o|}(\O)},
\ea
where $\b\in (0,1)$ is such that
\ba\label{est-mun-3-w<0}
\frac{\b}{3} =\frac 13 + \frac 1s - \frac{1}{2}-\frac{2+|\o|}{4}.
\ea
Therefore, for some $s>1$ and some $\b>0$, the following uniform estimate holds:
\ba\label{flux-commu-3-w<0}
\sup_{n\in \N} \left\|R_n^{ij} \right\|_{L^1(0,T;W^{\b,s}(\R^3))} \leq c\, \sup_{n\in \N} \| \vp \mu^S(\eta_n) \|_{L^2(0,T;W^{1,2}(\O))}\,\sup_{n\in \N}  \|\d_j \vu_n^i\|_{L^2(0,T;L^\frac{4}{2+|\o|}(\O))}\leq c.
\ea
Thanks to the strong convergence of $\eta_n$ shown in Section \ref{sec:lim-eta}, we have that
\ba\label{flux-commu-4-w<0}
 R_n^{ij} \to R^{ij}:= \RR_{ij} \left[\vp \,  \mu^S(\eta) \d_j \vu^i \right]- \vp \,  \mu^S(\eta) \RR_{ij} \left[ \d_j \vu^i \right], \\
 \mbox{weakly in} \ L^{r}((0,T)\times \O),\ \mbox{for any $r<\frac{20}{10+3|\o|}$}.
\ea
Thus, the Div-Curl lemma implies that
\ba\label{flux-commu-5-w<0}
R_n^{ij}\, T_k(\vr_n) \to R^{ij} \overline{T_k(\vr_n)} \quad \mbox{weakly in} \ L^{r}(0,T;L^{r}(\O)) \quad \mbox{for any $r<\frac{20}{10+3|\o|}$},
\ea
which, once again, implies \eqref{flux-new36}.

\medskip

Finally, from \eqref{flux-new34}, \eqref{flux-new35} and \eqref{flux-new36}, we deduce the desired result \eqref{flux-new33}. This implies \eqref{flux-new-terms3} by using \eqref{flux-new32}. At the same time, by tedious but, by now, standard calculations, as in the proof of the effective viscous flux lemma in \cite{Lions-C, F-book, N-book}, we have that
\ba\label{flux-new-terms3f}
\lim_{n\to\infty}\int_0^T \intO{  \SSS_n : \Grad \vv_n } \ \dt -  \int_0^T \intO{  \SSS : \Grad \vv } \ \dt =0.
\ea
Combining \eqref{flux-new-terms3f} with \eqref{flux-new-terms3}, we obtain \eqref{vis-flux} and complete the proof of Lemma \ref{lem-viscous-flux}.
\end{proof}

\medskip

With Lemma \ref{lem-viscous-flux} in hand, the proof of the strong convergence result \eqref{pn-limf2} then proceeds along a well-understood route (see for example \cite{Feireisl-2001}, from Section 6 to Section 8), so we shall not dwell on the details here. In particular, as in Proposition 7.1 in \cite{Feireisl-2001}, the limit $(\vr, \vu)$ satisfies \eqref{weak-renormal1} in the sense of renormalized weak solutions.

\medskip

It remains to show that the limit $(\vr,\vu,\psi,\eta)$ satisfies the energy inequality \eqref{energy}. This is easily seen by noting the strong convergence assumption on the initial data in \eqref{cond-ini-n} and passage to the limit $n\to \infty$ in \eqref{energy-n}, which directly imply the energy
inequality \eqref{energy} 
by the application of Tonelli's sequential weak (weak*) lower semicontinuity theorem (cf. Theorem 10.16 in \cite{RR-book}, for example,) to the terms appearing on the left-hand side of \eqref{energy-n}; in particular, the sequential weak lower semicontinuity of the $L^p$ norm, $1< p<\infty$, the sequential weak* lower semicontinuty of the $L^\infty$ norm and inequality \eqref{psin-est3} are used.
Thus we have shown that the limit $(\vr,\vu,\psi,\eta)$ is a dissipative (finite-energy) weak solution in the sense of Definition \ref{def-weaksl}. That completes the proof of Theorem \ref{theorem}.

\begin{remark}\label{rem-mu-eta}
The constraint $\o<\frac{5}{3}$ for the case $\o\geq 0$ is crucially determined by \eqref{mu-eta10} and the condition $\frac{10}{3\o+5}>1$, which appears in \eqref{flux-commu-4}, while the constraint $\o>-\frac{4}{3}$ for the case $\o\leq 0$ is crucially determined by \eqref{mu-eta10-w<0}. It is unclear whether,
with our present techniques at least, these restrictions on $\o$ can be relaxed.
\end{remark}

\section{Conclusion: existence of dissipative weak solutions}\label{sec:end}

\label{End}

The conclusions of Theorem \ref{theorem} \emph{do not}, of course, imply the existence of dissipative (finite-energy) weak solutions to the Navier--Stokes--Fokker--Planck system with polymer-number-density-dependent viscosity coefficients. A rigorous proof of the existence of dissipative weak solutions would require the following:
\begin{itemize}

\item a suitable approximation scheme, compatible with the energy inequality and the compactness arguments presented in this paper;

\item a rigorous proof of the existence of a solution to the approximation scheme;

\item proof of the convergence of the sequence of approximate solutions to a dissipative weak solution, mimicking the arguments presented in this paper.

\end{itemize}

Given the formal similarity of the present model to the one studied in \cite{Barrett-Suli}, a natural approach would be to adjust the approximation scheme used
in \cite{Barrett-Suli}, based on time-discretization, in the case of the Navier--Stokes--Fokker--Planck system with constant viscosity coefficients (or, alternatively, to use a Galerkin approximation scheme in the spatial variables, similar to the one in \cite{FNP}). The added technical difficulties, caused by the presence of the variable viscosity coefficients, can be handled exactly as in \cite{AbFei}, where a similar scheme, based on time-discretization, was applied to a diffuse interface model with viscosity coefficients that depended on the concentration.


\section*{Acknowledgements}
\thispagestyle{empty}

The authors acknowledge the support of the project LL1202 in the programme ERC-CZ funded by the Ministry of Education, Youth and Sports of the Czech Republic.



\begin{thebibliography}{000}


\bibitem{AbFei} H. Abels, E. Feireisl. On a diffuse interface model for a two-phase
flow of compressible viscous fluids. Indiana Univ. Math. J.  57 (2008) 659--698.

\bibitem{Barrett-Suli2} J. W. Barrett, E. S\"uli. Existence and equilibration of global weak solutions to finitely extensible nonlinear bead-spring chain models for dilute polymers. Imperial College London and University of Oxford, (2010).

\bibitem{Barrett-Suli4}J. W. Barrett, E. S\"uli.  Existence and equilibration of global weak solutions to kinetic models for dilute polymers I: Finitely
extensible nonlinear bead-spring chains. Math. Models Methods Appl. Sci. 21 (2011) 1211--1289.

\bibitem{Barrett-Suli3}  J. W. Barrett, E. S\"uli. Reflections on Dubinski{\u{\i}}'s nonlinear compact embedding theorem. Publ. Inst. Math. (Belgrade) 91 (2012) 95--110.

\bibitem{Barrett-Suli1} J. W. Barrett, E. S\"uli. Existence of global weak solutions to finitely extensible nonlinear bead-spring chain models for dilute polymers with variable density and viscosity. J. Differential Equations 253 (2012) 3610--3677.

\bibitem{Barrett-Suli} J. W. Barrett, E. S\"uli.   Existence of global weak solutions to compressible
isentropic finitely extensible nonlinear bead-spring chain models for dilute polymers. Math. Models Methods Appl. Sci. 26 (2016) DOI: 10.1142/S0218202516500093.

\bibitem{bog} M. E. Bogovski{\u{\i}}. Solution of some vector analysis problems connected
with operators div and grad. Trudy Sem. S.L. Soboleva 80 (1980) 5--40. In
Russian.

\bibitem{CM} R. Coifman, Y. Meyer. On commutators of singular integrals and bilinear
singular integrals. Trans. Amer. Math. Soc. 212 (1975) 315--331.

\bibitem{Dain} S. Dain. Generalised Korn's inequality and conformal Killing vectors. Calc. Var. Partial Differential Equations.
25 (2006) 535--540.

\bibitem{NPV} E. Di Nezza, G. Palatucci, E. Valdinoci. Hitchhiker's guide to the fractional Sobolev spaces. Bulletin des Sciences Math\'ematiques. 136 (2012) 521--573.


\bibitem{Dubin} J. A. Dubinski{\u{\i}}. Weak convergence for nonlinear elliptic and parabolic equations. Mat. Sb. (N.S.) 67 (109)
(1965) 609--642.

\bibitem{Feireisl-2001} E. Feireisl. On compactness of solutions to the compressible isentropic Navier--Stokes equations when the density is not square integrable. Comment. Math. Univ. Carolinae 42(1) 83--98.

\bibitem{F-book} E. Feireisl. \newblock {Dynamics of Viscous Compressible Fluids}.
\newblock Oxford University Press, Oxford, 2003.

\bibitem{FNP}
E.~Feireisl, A.~Novotn{\' y}, H.~Petzeltov{\' a}.
\newblock On the existence of globally defined weak solutions to the
  {N}avier-{S}tokes equations of compressible isentropic fluids.
\newblock { J. Math. Fluid Mech.} {3} (2001) 358--392.

\bibitem{F-N-book} E. Feireisl and A. Novotn\'y. Singular Limits in Thermodynamics of Viscous Fluids. Birkh\"auser Verlag, Basel, 2009.

\bibitem{Galdi-book} G. P. Galdi. An Introduction to the Mathematical Theory of the Navier--Stokes Equations: Steady-State Problems.
Second Edition.
\newblock Springer, New York, 2011.

\bibitem{Lions-InC}
P.-L. Lions. \newblock { Mathematical Topics in Fluid Dynamics, Vol.1, Incompressible  Models}.
\newblock Oxford University Press, Oxford, 1998.

\bibitem{Lions-C}
P.-L. Lions. \newblock { Mathematical Topics in Fluid Dynamics, Vol.2, Compressible Models}.
\newblock Oxford University Press, Oxford, 1998.

\bibitem{N-book} A. Novotn\'y, I. Stra\v{s}kraba. \newblock {Introduction to the Mathematical Theory of Compressible Flow}.
\newblock Oxford University Press, Oxford, 2004.

\bibitem{RR-book} M.~Renardy, R. C. Rogers. \newblock { An Introduction to Partial Differential Equations}.
\newblock Second Edition. \newblock Springer, New York, 2004.

\bibitem{Simon} J. Simon. Compact sets in the space $L^p(0, T;B)$. Ann. Math. Pura. Appl. 146 (1987) 65--96.

\bibitem{L.Tartar} L. Tartar. Compensated compactness and applications to partial differential
equations. Nonlinear Anal. and Mech., Heriot-Watt Sympos., R.J. Knops editor, Research Notes in Math. 39, Pitman, Boston, pages 136--211, 1975.
 \end{thebibliography}
\end{document}